\newcommand{\fpd}[2]{{\displaystyle\frac{\partial #1}{\partial #2}}}
\newcommand{\vf}[1]{{\displaystyle\frac{\partial}{\partial #1}}}
\newcommand{\onehalf}{{\textstyle\frac12}}
\newcommand{\dte}[1]{\eval{\frac{d}{dt} \, #1}{t=0}}
\newcommand{\eval}[2]{\left. #1 \right|_{#2}}
\newcommand{\TMO}{T^\circ\!M}
\newcommand{\TxMO}{T_x^\circ\!M}
\newcommand{\TyMO}{T_{y}^\circ\!M}
\newcommand{\Sec}{\mathop{\mathrm{Sec}}}
\newcommand{\conn}[3]{\setbox1=\hbox{$\scriptstyle{#2}{#3}$}
\setbox2=\hbox to\wd1{$\hfil\scriptstyle{#1}\hfil$}
\Gamma^{\!\box2}_{\!\box1}}
\newcommand{\C}{\mathfrak{C}}
\newcommand{\Cur}{\mathcal{C}}
\newcommand{\diff}{\mathop{\mathrm{diff}}}
\newcommand{\E}{\mathcal{E}}
\newcommand{\G}{\mathfrak{G}}
\newcommand{\hlift}[1]{{#1}^{\scriptscriptstyle\mathrm{H}}}
\newcommand{\hlft}[1]{{#1}^{\scriptscriptstyle\mathrm{H}}}
\newcommand{\vlift}[1]{{#1}^{\scriptscriptstyle\mathrm{V}}}
\newcommand{\hol}{\mathop{\mathrm{hol}}}
\newcommand{\I}{\mathfrak{I}}
\newcommand{\id}{\mathop{\mathrm{id}}}
\newcommand{\indic}{\mathcal{I}}
\newcommand{\J}{\mathfrak{J}}
\newcommand{\Lie}{\mathcal{L}}
\newcommand{\lie}{\Lie}
\newcommand{\K}{\mathfrak{K}}
\newcommand{\R}{\mathbb{R}}
\newcommand{\V}{\mathcal{V}}
\newcommand{\X}{\mathfrak{X}}
\newcommand{\Zbar}{\overline{Z}}
\newcommand{\chibar}{\overline{\chi}}
\newtheorem*{defn}{Definition}
\newtheorem{thm}{Theorem}
\newtheorem{cor}{Corollary}
\newtheorem{prop}{Proposition}
\newtheorem{lem}{Lemma}
\newcommand{\art}[6]{#1: #2 {\it #3\/} {\bf #4} (#5) #6}
\newcommand{\artapp}[3]{#1: #2 {\it #3\/} to appear}
\newcommand{\book}[4]{#1: {\it #2\/} (#3, #4)}
\newcommand{\inbook}[6]{#1: #2, in {\it#3\/} #4 (#5, #6)}
\begin{document}

\title{Holonomy of a class of bundles with fibre metrics}

\author{M.\ Crampin\\
Department of Mathematics,
Ghent University\\
Krijgslaan 281, B--9000 Gent, Belgium\\
and D.\,J.\ Saunders\\
Department of Mathematics, Faculty of Science,\\
The University of Ostrava\\
30.\ dubna 22, 701 03 Ostrava, Czech Republic}

\maketitle

\begin{abstract}\noindent
This paper is concerned with the holonomy of a class of spaces which 
includes Landsberg spaces of Finsler geometry. The methods used are 
those of Lie groupoids and algebroids as developed by Mackenzie. We 
prove a version of the Ambrose-Singer Theorem for such spaces. The 
paper ends with a discussion of how the results may be extended to 
Finsler spaces and homogeneous nonlinear connections in general.
\end{abstract}

\subsubsection*{MSC}
53C60; 53C05, 53C29, 58H05. 

\subsubsection*{Keywords}
Fibre metric; Finsler space; holonomy algebroid; holonomy groupoid;
horizontal distribution; Landsberg space; nonlinear connection.

\section{Introduction}

In most conventional accounts of the holonomy of a principal
connection (the one by Kobayashi and Nomizu \cite{KandN} has in our
view rarely if ever been bettered, so we take it as a standard
reference) attention is focussed on the holonomy group or algebra at a
single point in the base manifold.  It is, of course, proved that
holonomy groups at different points are isomorphic, and likewise
holonomy algebras:\ but these facts are treated almost in passing.
Yet the holonomy objects are pointwise representatives of global
structures, and one yearns for a theory of holonomy that adequately
reflects this fact.  It seems clear that, for example, the collection
of holonomy algebras at all points of the base manifold $M$ will form
a Lie algebra bundle over $M$ (a vector bundle whose fibres are Lie
algebras, with local trivializations which are fibrewise Lie algebra
isomorphisms with a standard Lie algebra fibre).  One would like to
know how the structure of this bundle is determined by the connection
from which it is derived.

A theory of holonomy which provides the answers to such questions has
been developed by Mackenzie \cite{MK05}.  Mackenzie's fundamental
insight is that holonomy should be seen as a branch of the theory of
Lie groupoids and Lie algebroids.  Consider the case of a connection
in a principal $G$-bundle $\pi:P\to M$.  Parallel displacement along a
piecewise smooth curve in $M$ joining points $x$ and $y$ is an
isomorphism (strictly, $G$-equivariant diffeomorphism) of the fibre
$P_x=\pi^{-1}(x)$ with the fibre $P_y$.  The collection of all such
fibre isomorphisms, for all curves $c$, is a groupoid $\Theta$ over
$M$, with source projection the initial point $x$ of $c$, target
projection the final point $y$.  The holonomy group at a point $x$ is
the vertex group at $x$ of $\Theta$, that is, the set of elements with
the same source and target $x$.  This groupoid is in fact a Lie
groupoid, and has therefore associated with it a Lie algebroid
$A\Theta$ over $M$, which is transitive; the holonomy Lie algebra
bundle is just the kernel of $A\Theta$.

It seems to us that the theory of the holonomy of nonlinear connections,
such as those that arise naturally in Finsler geometry, has so far
lacked a really satisfactory conceptual framework, and we believe that
the groupoid approach provides one.  The underlying purpose of this
paper is to initiate the development of the theory of the holonomy of
nonlinear connections from this point of view. (For a survey of 
previous work on this subject see \cite{Kozma}.) For technical reasons,
which will be discussed in some detail at the end of the main part of
the paper, we shall in fact restrict our attention for the most part
to a certain class of geometrical structures admitting nonlinear
connections, of which Landsberg spaces in Finsler geometry provide the
best-known examples.

The fundamental tensor of a Finsler space over a manifold $M$ defines
on each fibre $\TxMO$ of the slit tangent bundle a Riemannian metric.
This is evidently a particular case of a more general notion, that of
a fibre bundle with a fibre metric.  Quite a large proportion of this
paper is devoted in fact to investigating relevant aspects of the
differential geometry of such spaces in general, before we even
consider such matters as nonlinear connections and holonomy.  When we
do come to deal with holonomy we restrict our attention to a special
class of such spaces, which stand in relation to spaces with fibre
metrics in general as Landsberg spaces do to the full class of Finsler
spaces.  For technical reasons we deal only with bundles with compact
fibres (we can of course treat a Finsler structure as defined on a
bundle having compact fibres by restricting to the indicatrix bundle).
We prove a version of the Ambrose-Singer Theorem for this class of
spaces, which specifies the holonomy Lie algebra bundle in terms of
curvature.  In any account of holonomy a distinction needs to be made
between the full holonomy algebra at a point and the algebra generated
by the covariant derivatives of the curvature; the latter is in
general a proper subalgebra of the former, and indeed may differ from
point to point.  We discuss this matter in relation to Landsberg
spaces in some detail.

The paper is laid out as follows.  In the following section we give a
brief resum\'{e} of Mackenzie's theory of holonomy.  The geometry of
bundles with fibre metrics is described in Section 3.  The application
to Landsberg spaces and their holonomy occupies Section 4.  We deal
with covariant derivatives of the curvature in Section 5.  Section 6
concludes the main part of the paper with a discussion of how our
results might apply to Finsler spaces and nonlinear connections in
greater generality.  There are two appendices:\ in the first we give
the basic definitions of groupoids and Lie algebroids, for the
reader's convenience; in the second we present proofs of various
results concerning vector bundles with connection which are needed in
the main text.

\section{Groupoids and holonomy}

In this section we summarise some general results regarding groupoids
and holonomy.  The reader is referred to~\cite[Section~6.3]{MK05} for
details of the proofs.  We start by describing a connection on a
locally trivial Lie groupoid $\Omega$ over a connected base manifold
$M$, with source projection $\alpha$ and target projection $\beta$, as
a means of lifting curves in $M$ to curves in $\Omega$.
\begin{defn}
Let $\Cur(M)$ denote the set of continuous, piecewise-smooth curves $c
: [0,1] \to M$, and let $\Cur(\Omega)$ denote the corresponding set of
curves in $\Omega$.  A \emph{connection on $\Omega$} is a map $\Gamma
: \Cur(M) \to \Cur(\Omega)$, $c \mapsto c^\Gamma$, satisfying the
following properties:
\begin{enumerate}
\item $c^\Gamma(0) = 1_{c(0)}$ and, for all $t \in [0,1]$,
\[
\alpha\big(c^\Gamma(t)\big) = c(0), \qquad \beta\big(c^\Gamma(t)\big) = c(t);
\]
\item if $[a,b] \subset [0,1]$, and if $\phi : [0,1] \to [a,b]$ is a
diffeomorphism, then
\[
(c \circ \phi)^\Gamma = r_{c^\Gamma(\phi(0))^{-1}} \circ c^\Gamma \circ \phi;
\]
\item if $c$ is smooth at $t \in [0,1]$ then so is $c^\Gamma$;
\item if $c_1, c_2 \in \Cur(M)$ and, for some $t \in [0,1]$,
\[
\dot{c}_1(t) = \dot{c}_2(t)
\]
then
\[
\dot{c}^\Gamma_1(t) = \dot{c}^\Gamma_2(t);
\]
\item if $c_1, c_2, c_3 \in \Cur(M)$ and, for some $t \in [0,1]$,
\[
\dot{c}_1(t) + \dot{c}_2(t) = \dot{c}_3(t)
\]
then
\[
\dot{c}^\Gamma_1(t) + \dot{c}^\Gamma_2(t) = \dot{c}^\Gamma_3(t). 
\]
\end{enumerate} 
\end{defn}
A consequence of the properties above is that lifts are consistent
with reparametrization, so that the definition of a lift may be
extended to curves whose domains are arbitrary intervals in $\R$.

In the remainder of this paper we shall be concerned with Lie
algebroids as much as with Lie groupoids, and so we shall also need
the equivalent definition, that of an infinitesimal connection.
\begin{defn}
Let $\pi : A \to M$ be a transitive Lie algebroid, with anchor map $a
: A \to TM$.  A \emph{connection on $A$} is a vector bundle map
$\gamma : TM \to A$ over the identity on $M$ satisfying $a \circ
\gamma = \id_{TM}$.  If $\Omega$ is a locally trivial Lie groupoid
then an \emph{infinitesimal connection on $\Omega$} is a connection on
the Lie algebroid $A\Omega$.  
\end{defn}
\begin{prop}
There is a bijective correspondence between connections $\Gamma$ and
infinitesimal connections $\gamma$ on $\Omega$, given by
\[
\dot{c}^\Gamma(t) = r_{c^\Gamma(t)*} \big(\gamma(\dot{c}(t))\big). 
\] 
\end{prop}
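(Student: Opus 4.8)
The plan is to exhibit the two assignments $\gamma\mapsto\Gamma$ and $\Gamma\mapsto\gamma$ and to show they are mutually inverse. I will use the standard descriptions of the objects involved: $A_x\Omega=T_{1_x}\alpha^{-1}(x)$, the anchor is $a=\beta_*$ restricted to $A\Omega$, and right translation $r_g\colon\alpha^{-1}(\beta(g))\to\alpha^{-1}(\alpha(g))$ satisfies that $\alpha\circ r_g$ is constant, that $\beta\circ r_g=\beta$, and that $r_{c^\Gamma(t)}(1_{c(t)})=c^\Gamma(t)$. These make the displayed formula type-correct, since $\gamma(\dot c(t))\in A_{c(t)}\Omega=T_{1_{c(t)}}\alpha^{-1}(c(t))$ and $r_{c^\Gamma(t)*}$ carries it into $T_{c^\Gamma(t)}\alpha^{-1}(c(0))$. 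It is convenient to record that, after applying $r_{c^\Gamma(t)^{-1}*}$, the formula is equivalent to $\omega_c(t):=r_{c^\Gamma(t)^{-1}*}(\dot c^\Gamma(t))=\gamma(\dot c(t))$ with $\omega_c(t)\in A_{c(t)}\Omega$; this element is also the quantity appearing in axioms (4) and (5), the sum in (5) being taken in the common fibre $A_{c(t)}\Omega$.

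First, from $\gamma$ to $\Gamma$. For smooth $c$ I would read the displayed equation as a time-dependent ordinary differential equation for $c^\Gamma$ with initial value $c^\Gamma(0)=1_{c(0)}$, its right-hand side being smooth in the groupoid variable and piecewise smooth in $t$. Existence, uniqueness and, crucially, solvability over the whole of $[0,1]$ I would obtain from local triviality: over a neighbourhood $U$ on which $\Omega$ restricts to the trivial groupoid $U\times G\times U$, the equation reduces to a logarithmic, hence globally solvable, equation for the $G$-component, with $\dot h\,h^{-1}$ equal to a prescribed curve in the Lie algebra of $G$. Since $c([0,1])$ is compact it meets only finitely many such charts, and the finitely many corners of a piecewise-smooth $c$ are absorbed by restarting the solution; this also gives consistency with reparametrization and hence extension to arbitrary intervals. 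The axioms are then checked directly: for (1), the initial condition gives $c^\Gamma(0)=1_{c(0)}$, while $\alpha_*(\dot c^\Gamma)=\alpha_*r_{c^\Gamma*}\gamma(\dot c)=0$ keeps $c^\Gamma$ in the single fibre $\alpha^{-1}(c(0))$, and $\beta_*(\dot c^\Gamma)=\beta_*\gamma(\dot c)=a(\gamma(\dot c))=\dot c$ forces $\beta(c^\Gamma(t))=c(t)$; (3) is smoothness of solutions; (4) and (5) are immediate from $\omega_c(t)=\gamma(\dot c(t))$ together with the linearity of $\gamma$; and (2) follows from uniqueness of solutions.

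Conversely, from $\Gamma$ to $\gamma$, I set $\gamma(\xi):=\dot c^\Gamma(0)$ for any $c$ with $c(0)=x$ and $\dot c(0)=\xi$. Since $c^\Gamma(0)=1_x$ this equals $\omega_c(0)$ and lies in $T_{1_x}\alpha^{-1}(x)=A_x\Omega$, because axiom (1) forces $\alpha(c^\Gamma(t))\equiv x$. Axiom (1) also gives $a(\gamma(\xi))=\beta_*(\dot c^\Gamma(0))=\dot c(0)=\xi$, so $a\circ\gamma=\id$. Axiom (4) at $t=0$ shows $\gamma(\xi)$ is independent of the representative $c$, hence well defined; axioms (4) and (5) give additivity and, with the reparametrization axiom (2), homogeneity, so $\gamma$ is fibrewise linear; and smoothness of $\gamma$ as a bundle map follows from (3) and smooth dependence on initial data.

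To see the constructions are inverse, the one substantive point is to upgrade the relation $\omega_c(0)=\gamma(\dot c(0))$ to $\omega_c(t)=\gamma(\dot c(t))$ for all $t$. Here I would invoke axiom (2): reparametrizing $c$ by the shift $s\mapsto c(t_0+s)$ and applying the prescribed right translation produces a lift taking the value $1_{c(t_0)}$ at $s=0$, so by the definition of $\gamma$ its initial velocity is $\gamma(\dot c(t_0))$; differentiating the right-translation relation identifies this with $\omega_c(t_0)$, yielding the displayed formula at the arbitrary time $t_0$. Consequently, starting from $\gamma$ the lift built by the ODE reproduces $\gamma$, and starting from $\Gamma$ the curve $c^\Gamma$ solves the defining ODE for the recovered $\gamma$, so uniqueness returns the original $\Gamma$. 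I expect the main obstacle to be precisely the analytic heart of the forward construction, namely global existence, uniqueness and smooth dependence of the lift across all of $[0,1]$ for merely piecewise-smooth $c$, for which local triviality is the essential tool; once that is in hand, the verification of the axioms and of the inverse relationship is essentially formal, resting on uniqueness of solutions and on the linearity built into $\gamma$ and into right translation.
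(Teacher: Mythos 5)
A point of reference first: the paper does not actually prove this proposition --- it sits in the summary Section~2, whose proofs are explicitly deferred to Mackenzie \cite[Section~6.3]{MK05} --- so your argument can only be compared with the standard one, which is essentially what you reproduce. Your reading of axioms (4) and (5) through the right-translated velocity $\omega_c(t)=r_{c^\Gamma(t)^{-1}*}\big(\dot{c}^\Gamma(t)\big)\in A_{c(t)}\Omega$ is the correct (and necessary) interpretation, since the axioms as literally written equate tangent vectors based at different points of $\Omega$; the passage from $\gamma$ to $\Gamma$ by integrating the right-invariant ODE, with local triviality and compactness of $[0,1]$ giving global existence; the passage back by differentiating at $t=0$, with (4), (5) and (2) giving well-definedness and linearity; and the shift argument upgrading $\omega_c(0)=\gamma(\dot{c}(0))$ to $\omega_c(t)=\gamma(\dot{c}(t))$, which then yields mutual inverseness --- all of this is the standard route and is sound in outline. (One small imprecision: once $c$ leaves the initial chart the lift no longer lies in $\Omega_{U,U}$, so the patching must proceed by lifting successive pieces of $c$ from identities and then multiplying/right-translating the pieces together; your ``restarting'' remark gestures at this but it deserves to be said explicitly.)

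The genuine gap is the smoothness of $\gamma$ in the direction $\Gamma\mapsto\gamma$. You claim it ``follows from (3) and smooth dependence on initial data'', but that is circular: an abstract connection $\Gamma$ is merely a map $\Cur(M)\to\Cur(\Omega)$ satisfying (1)--(5); it is not yet known to consist of solutions of any differential equation, so there is no ``smooth dependence on initial data'' to invoke --- that property belongs to the other direction, and becomes available only after one knows the derived $\gamma$ is smooth (or at least locally Lipschitz). What the axioms actually yield is: $\gamma$ is well defined, fibrewise linear, satisfies $a\circ\gamma=\id$, and for each individual smooth curve $c$ the map $t\mapsto\gamma(\dot{c}(t))$ is smooth (axiom (3) combined with your shift argument). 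Passing from ``fibrewise linear and smooth along every velocity curve $t\mapsto\dot{c}(t)$'' to smoothness of $\gamma$ as a bundle map $TM\to A\Omega$ is a real step of Boman type (a function smooth along all smooth curves is smooth), and it must be proved --- for instance in a local trivialization $\Omega_{U,U}\cong U\times G\times U$, where $\gamma$ is encoded by a fibrewise-linear $\mathfrak{g}$-valued form on $TU$; it is not automatic. Note also that the gap propagates: your final appeal to ``uniqueness of solutions'' to recover $\Gamma$ from its derived $\gamma$ presupposes exactly this regularity. So the architecture is right, but this one analytic point --- not global existence, which you correctly identify and handle, but the regularity of the recovered $\gamma$ --- is unproven as written.
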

We now consider a fixed connection $\Gamma$ on $\Omega$.  It is a
consequence of the definition that the lift of a constant curve is an
identity in $\Omega$, that the lift of a concatenation of curves is
the product of the separate lifts, and that the lift of a curve
traversed in the reverse direction is the inverse of the lift of the
original curve.  We may therefore make the following definition.
\begin{defn}
The \emph{holonomy subgroupoid} $\Theta \subset \Omega$ of the
connection $\Gamma$ is defined by
\[
\Theta = \{ c^\Gamma(1) : c \in \Cur(M) \}.
\]
For any $x \in M$ the \emph{holonomy group} of $\Gamma$ at $x$ is defined by
\[
H_x = \{ c^\Gamma(1) \in \Theta : c(0) = c(1) = x \},
\] 
and the \emph{restricted holonomy group} is the normal subgroup
$H^\circ_x \lhd H_x$ where the loops $c$ are contractible in $M$.
\end{defn}
\begin{thm}
The holonomy groupoid $\Theta$ is a Lie subgroupoid of $\Omega$.
\end{thm}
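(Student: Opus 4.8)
The plan is to transcribe the classical reduction theorem (as in \cite{KandN}, Chapter~II) into the groupoid language of the present section. The remarks preceding the definition of $\Theta$ already show that it is a subgroupoid of $\Omega$ in the algebraic sense: it contains every identity $1_{c(0)}$, and is closed under the partial multiplication and under inversion. Moreover, since $M$ is connected, any two points can be joined by a curve $c \in \Cur(M)$, and then $c^\Gamma(1) \in \Theta$ has source $c(0)$ and target $c(1)$; thus $\Theta$ is transitive. For a transitive groupoid the task of producing a Lie subgroupoid structure localises to a single vertex: it is enough to show that the holonomy group $H_x = \Theta \cap \alpha^{-1}(x) \cap \beta^{-1}(x)$ is a Lie subgroup of the vertex group $G_x$ of $\Omega$, and then to spread this structure over $M$ using local triviality.

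The heart of the argument is the restricted holonomy group $H^\circ_x$. Given a loop $c$ at $x$ that is contractible, pick a based homotopy $\{c_s\}_{s\in[0,1]}$ with $c_0$ the constant loop and $c_1=c$, each $c_s$ a loop at $x$. The essential analytic input is that the lift endpoint $c_s^\Gamma(1)$ depends continuously on $s$; this follows from the continuous dependence on parameters of the solutions of the lifting prescription $\dot{c}^\Gamma(t) = r_{c^\Gamma(t)*}\big(\gamma(\dot{c}(t))\big)$ of the preceding proposition. The assignment $s \mapsto c_s^\Gamma(1)$ is then a path in $H^\circ_x$ from $1_x$ to $c^\Gamma(1)$, so $H^\circ_x$ is arcwise connected. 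By the theorem of Yamabe, an arcwise-connected subgroup of a Lie group is an immersed Lie subgroup; hence $H^\circ_x$ is a connected Lie subgroup of $G_x$.

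It remains to accommodate the noncontractible loops. Concatenation of loops shows that $H_x/H^\circ_x$ is a quotient of the fundamental group $\pi_1(M,x)$, which is countable because $M$ is a (second-countable) manifold; thus $H_x$ is a Lie subgroup of $G_x$ with at most countably many components, each diffeomorphic to $H^\circ_x$. To globalise, I would choose for each $y$ in a neighbourhood $U$ of $x$ a curve from $x$ to $y$ varying smoothly with $y$, giving parallel-transport elements $\tau_y \in \Theta$ (source $x$, target $y$) depending smoothly on $y$. The maps $(y_0,y_1,h) \mapsto \tau_{y_1}\,h\,\tau_{y_0}^{-1}$ then furnish charts $U\times U\times H_x \to \Theta$; these cover $\Theta$, are mutually compatible, and render the source, target, multiplication and inversion smooth, so $\Theta$ is a locally trivial Lie subgroupoid of $\Omega$.

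The principal obstacle is the continuity of the holonomy endpoint under deformation of the loop, on which both the arcwise-connectedness of $H^\circ_x$ and the smoothness of the transport elements $\tau_y$ depend. This requires the smooth dependence of solutions of the lifting equation on the generating curve, handled within the merely piecewise-smooth category in which $\Cur(M)$ is defined; some care is needed at the breakpoints of the curves and in exploiting the reparametrisation invariance supplied by property~(2) of the connection. Once this regularity is established, the appeal to Yamabe's theorem and the passage to a global Lie groupoid structure are essentially formal.
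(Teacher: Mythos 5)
Your overall architecture coincides with the paper's: the vertex group of $\Omega$ at $x$ is a Lie group, $H^\circ_x$ is shown to be path-connected and hence (by Yamabe's theorem) a Lie subgroup, countability of $\pi_1(M,x)$ and the surjection $\pi_1(M,x)\to H_x/H^\circ_x$ then make $H_x$ a Lie group, and the groupoid structure is globalised by charts of the form $(y_0,y_1,h)\mapsto\tau_{y_1}\,h\,\tau_{y_0}^{-1}$, exactly as in the paper's appeal to its Theorem~\ref{isogpoid} and to Mackenzie. But there is a genuine gap at the one point where you diverge: the path-connectedness of $H^\circ_x$. You take an arbitrary contractible loop $c$, ``pick a based homotopy $\{c_s\}$'' from the constant loop to $c$, and assert that $s\mapsto c_s^\Gamma(1)$ is continuous by continuous dependence of solutions of the lifting equation on parameters. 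Contractibility, however, supplies only a \emph{continuous} homotopy: the intermediate loops $c_s$ need not be piecewise smooth, hence need not lie in $\Cur(M)$ at all, so the lifts $c_s^\Gamma$ on which your argument operates are in general undefined; and even if each $c_s$ happened to be piecewise smooth, the parameter-dependence theorem you invoke needs joint regularity of $(s,t)\mapsto c_s(t)$ and of its $t$-derivative, which a bare homotopy does not provide. This is precisely the difficulty the classical argument, and the paper following it, is structured to avoid: the Factorization Lemma of Kobayashi--Nomizu \cite{KandN} factors the holonomy of a null-homotopic piecewise-smooth loop into holonomies of \emph{lassos} whose small loops lie in single coordinate charts, and such a small loop can be shrunk to a point by an explicit (for instance radial) family that is manifestly smooth in the deformation parameter, so continuity of the lifted endpoint comes for free.

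The gap is repairable: one can invoke a Whitney-type approximation theorem to replace the continuous based homotopy by a smooth homotopy rel basepoint, after which your parameter-dependence argument applies. But that appeal (or, alternatively, the Factorization Lemma) is the missing idea, and it is not among the obstacles you list at the end of your proposal; the breakpoint and reparametrisation issues you name there are minor by comparison, and identifying them as ``the principal obstacle'' misplaces where the real work lies.
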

\begin{proof}
It is clear that $\Theta$ will be a subgroupoid under the operations
induced from $\Omega$.  The proof that it is a Lie subgroupoid is a
modification of the classical proof that the holonomy groups of a
connection on a principal bundle are Lie groups, as described
in~\cite[Theorem~4.2]{KandN}.

The proof starts by writing $\Omega_{x,x}$ for the subset $\{ \omega \in
\Omega : \alpha(\omega) = \beta(\omega) = x \}$; it is clear that $\Omega_{x,x}$
is both a group and a submanifold of $\Omega$; the Lie groupoid
properties of $\Omega$ then imply that $\Omega_{x,x}$ is a Lie group.
Thus $H_x$ and $H^\circ_x$ are (topological) subgroups of
$\Omega_{x,x}$.

The next step of the argument is to show that $H^\circ_x$ is
path-connected.  An intricate argument known as the Factorization
Lemma (see \cite[Appendix 7]{KandN}) shows that it is sufficient to
consider elements $c^\Gamma(1) \in H^\circ_x$ of a particular type, where the curve $c$
is known as a \emph{lasso}, and is a small
loop based at some point $y \in M$ preceded by a path from $x$ to $y$
and followed by the reverse path from $y$ to $x$.  The small loop may
be taken to lie within a single coordinate chart of $M$, and thus the
lasso $c$ may be shrunk to the single point $x \in M$, giving a path in
$H^\circ_x$ from $c^\Gamma(1)$ to the identity.  It then follows from a
standard theorem that the restricted holonomy group $H^\circ_x$, as a
path-connected subgroup of a Lie group, is itself a Lie group.

It follows from this that the full holonomy group $H_x$ is also a Lie
group. Here we use the fact that $M$ is second
countable, a consequence of the assumed connectedness of $M$ and the
standard assumption that $M$ is paracompact.  Thus the homotopy group
$\pi_1(M,x)$ is countable, and the existence of a homomorphism
$\pi_1(M,x) \to H_x / H^\circ_x$ shows that the quotient group $H_x /
H^\circ_x$ is countable.  It then follows from another standard
theorem that $H_x$ is a Lie group.

Finally, we have to show that $\Theta$ is a Lie groupoid.  The
argument here is similar to the one we give below in
Theorem~\ref{isogpoid}, using the Lie group structure of each $H_x$
and the manifold structure of $M$; more details may be found
in~\cite[Theorem~6.3.19]{MK05}.
\end{proof}
Given the connection $\Gamma$ on $\Omega$, the corresponding
infinitesimal connection $\gamma$ will be defined on the Lie algebroid
$A\Omega$.  We have seen that the holonomy groupoid $\Theta$ of
$\Gamma$ is a Lie groupoid, and so it will have a Lie algebroid
$A\Theta$ which may be identified with a Lie subalgebroid of
$A\Omega$.  We now wish to see if this particular subalgebroid may be
characterised in some way by $\gamma$.  To find such a
characterisation, we need to use the covariant differentiation and the
curvature associated with $\gamma$.
\begin{defn}
Let $\gamma$ be an infinitesimal connection on $A\Omega$.  Define, for
any vector field $X \in \X(M)$, the covariant derivative map
$\nabla^\gamma_X : \Sec(A\Omega) \to \Sec(A\Omega)$ by
\[
\nabla^\gamma_X(\phi) = [\gamma(X), \phi].
\]
Define the curvature of the connection to be the map 
$R^\gamma : \X(M) \times \X(M) \to \Sec(A\Omega)$ given by
\[
R^\gamma(X,Y) = \gamma([X,Y]) - [\gamma(X), \gamma(Y)],
\]
describing the extent to which $\gamma : \X(M) \to \Sec(A\Omega)$
fails to be a Lie algebra homomorphism. 
\end{defn}
We shall also need to consider Lie algebra sub-bundles $L$ of
$L\Omega$, the kernel of $A\Omega$.  Given such an $L$, define $A
\subset A\Omega$ as the span of certain sections of $A\Omega$,
\[
\Sec(A) = \{ \phi \in \Sec(A\Omega) : \phi - \gamma a(\phi) \in \Sec(L) \}. 
\]
\begin{prop}
If $L$ is invariant under covariant differentiation, so that
$\nabla^\gamma_X (\Sec(L)) \subset \Sec(L)$ for every $X \in \X(M)$, and
if the curvature takes its values in $L$, so that $R^\gamma(X,Y) \in
\Sec(L)$ for every $X, Y \in \X(M)$, then $A \to M$ is a Lie algebroid
with kernel $L$. 
\end{prop}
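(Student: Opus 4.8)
The plan is to verify in turn that $A$ is a smooth vector sub-bundle of $A\Omega$, that the anchor restricts to it with kernel exactly $L$, and finally that $\Sec(A)$ is closed under the algebroid bracket; once closure is established the Leibniz and Jacobi identities for the restricted bracket are inherited from $A\Omega$. First I would pin down the bundle structure. Since $\gamma$ is a vector bundle map with $a \circ \gamma = \id$, at each $x \in M$ the image $\gamma(T_xM)$ is a complement to the kernel $L\Omega_x = \ker a_x$, and because $L_x \subset L\Omega_x$ the sum $A_x := \gamma(T_xM) \oplus L_x$ is direct, of constant rank $\dim M + \operatorname{rank} L$, and varies smoothly with $x$ since both $\gamma$ and $L$ are smooth. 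Thus $A$ is a smooth vector sub-bundle of $A\Omega$. A section $\phi$ of $A\Omega$ takes its values in $A$ precisely when $\phi - \gamma a(\phi)$ is a smooth section of $L$, so $\Sec(A)$ as defined in the statement is exactly the section module of this sub-bundle; the identity $f\phi - \gamma a(f\phi) = f(\phi - \gamma a(\phi))$ confirms it is a $C^\infty(M)$-submodule.

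Next I would handle the anchor and the kernel together, as this disposes of the final clause at once. The restriction $a|_A : A \to TM$ is surjective, since $\gamma(X) \in \Sec(A)$ for every $X \in \X(M)$ and $a\gamma(X) = X$; and a vector $\xi \in A_x$ satisfies $a(\xi) = 0$ if and only if $\xi = \xi - \gamma a(\xi) \in L_x$. Hence the kernel of $a|_A$ is exactly $L$.

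The crux is closure of $\Sec(A)$ under the bracket, and this is the step that consumes both hypotheses. Given $\phi, \psi \in \Sec(A)$ I would write $\phi = \gamma(X) + \lambda$ and $\psi = \gamma(Y) + \mu$ with $X = a(\phi)$, $Y = a(\psi)$ and $\lambda, \mu \in \Sec(L)$, and expand $[\phi,\psi]$ into its four constituent brackets. Using the curvature identity $[\gamma(X),\gamma(Y)] = \gamma([X,Y]) - R^\gamma(X,Y)$, the definition of $\nabla^\gamma$ for the two cross terms, and the fact that the algebroid bracket of two sections of the kernel $L\Omega$ is $C^\infty(M)$-bilinear and so reduces to the fibrewise Lie-algebra bracket, together with $a([\phi,\psi]) = [X,Y]$ (the anchor being a homomorphism), I obtain
\[
[\phi,\psi] - \gamma a([\phi,\psi]) = -R^\gamma(X,Y) + \nabla^\gamma_X(\mu) - \nabla^\gamma_Y(\lambda) + [\lambda,\mu].
\]
Each of the four terms on the right lies in $\Sec(L)$: the first by the curvature hypothesis, the middle two by the invariance of $L$ under covariant differentiation, and the last because $L$ is a Lie algebra sub-bundle. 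Therefore $[\phi,\psi] \in \Sec(A)$.

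I expect the only genuinely delicate points to be the bookkeeping that makes $A$ a bona fide vector bundle (the constant rank and smooth dependence of $A_x$ on $x$) and the observation that the restriction of the bracket to $\Sec(L\Omega)$ is tensorial, which is precisely what licenses applying the sub-bundle condition on $L$ to the term $[\lambda,\mu]$. The bracket computation itself is short, but it is exactly where both hypotheses are used, so it is the heart of the argument. With closure in hand, the restricted operations make $(A, [\,\cdot\,,\,\cdot\,]|_{\Sec(A)}, a|_A)$ a Lie algebroid, and by the previous step its kernel is $L$.
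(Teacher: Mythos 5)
Your proof is correct: the splitting $\phi = \gamma(a(\phi)) + (\phi - \gamma a(\phi))$, the bracket expansion via $R^\gamma$ and $\nabla^\gamma$, the tensoriality of the bracket on sections of the kernel (which licenses the fibrewise treatment of $[\lambda,\mu]$), and the identification of $\ker(a|_A)$ with $L$ are exactly the ingredients required, and each hypothesis is consumed where you say it is. The paper itself gives no proof of this proposition (it is quoted from Section~6.3 of Mackenzie's book), but your computation reproduces precisely the decomposition the authors record later for the special case $\I \equiv TM \oplus \K$, namely
\[
[X\oplus V,Y\oplus W]=[X,Y]\oplus\left(-R^\gamma(X,Y)+\nabla^\gamma_XW-\nabla^\gamma_YV+[V,W]\right),
\]
so your route coincides with the intended one.
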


The fundamental result (\cite[Theorem~6.4.20]{MK05}) is as follows.

\begin{thm}\label{least}
There is a least Lie algebra sub-bundle $(L\Omega)^\gamma$ of
$L\Omega$ satisfying the conditions of the proposition above, and the
corresponding Lie subalgebroid $(A\Omega)^\gamma$ of $A\Omega$
satisfies
\[
(A\Omega)^\gamma = A\Theta.
\]
\end{thm}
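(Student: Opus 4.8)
The plan is to identify the least admissible sub-bundle explicitly as the kernel of the holonomy algebroid, and then to verify the two required inclusions. Write $L\Theta = \ker(A\Theta)$ for the kernel of the Lie algebroid of $\Theta$. Since $M$ is connected, $\Theta$ is transitive: any two points are joined by a curve $c$ whose lift $c^\Gamma$ supplies an element of $\Theta$ with the prescribed source and target. Hence $A\Theta$ is a transitive Lie algebroid and $L\Theta$ is automatically a Lie algebra sub-bundle of constant rank, so I need not separately argue that intersections of admissible sub-bundles retain constant rank.

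First I would show that $L\Theta$ satisfies the hypotheses of the preceding proposition. The key observation is that the connection takes its values in $A\Theta$: for a curve $c$ the whole lift $c^\Gamma$ lies in $\Theta$ (each $c^\Gamma(t)$ is, after reparametrisation, the endpoint of the lift of a subcurve), so $\gamma(\dot c(0)) = \dot c^\Gamma(0) \in A_{c(0)}\Theta$, and therefore $\gamma(X) \in \Sec(A\Theta)$ for every $X \in \X(M)$. Consequently both $R^\gamma(X,Y) = \gamma([X,Y]) - [\gamma(X),\gamma(Y)]$ and $\nabla^\gamma_X\phi = [\gamma(X),\phi]$ (for $\phi \in \Sec(L\Theta)$) lie in $\Sec(A\Theta)$; and since the anchor annihilates each of them, namely $a(R^\gamma(X,Y)) = [X,Y]-[X,Y]=0$ and $a([\gamma(X),\phi]) = [X, a(\phi)] = 0$, they in fact lie in $\Sec(A\Theta) \cap \Sec(L\Omega) = \Sec(L\Theta)$. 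Thus $L\Theta$ is invariant under covariant differentiation and carries the curvature, as required.

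The substantive step is leastness, that $L\Theta \subseteq L$ for every admissible sub-bundle $L$. Given such an $L$, the proposition produces a Lie subalgebroid $A \subseteq A\Omega$ with kernel $L$, and by construction $\gamma(X) \in \Sec(A)$ for all $X$, so $\gamma$ is a connection on $A$. I would then integrate $A$ to a Lie subgroupoid $\Omega' \subseteq \Omega$, using that a Lie subalgebroid of the integrable algebroid $A\Omega$ integrates to a subgroupoid of $\Omega$; this is where Mackenzie's machinery on reductions is invoked, and it is the point I expect to demand the most care. Because $\gamma(\dot c(t)) \in A_{c(t)} = A_{c(t)}\Omega'$ and $\dot c^\Gamma(t) = r_{c^\Gamma(t)*}(\gamma(\dot c(t)))$, the lift $c^\Gamma$ is tangent to $\Omega'$ throughout; as it starts at the identity $1_{c(0)} \in \Omega'$, it remains in $\Omega'$, so $c^\Gamma(1) \in \Omega'$. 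Hence $\Theta \subseteq \Omega'$, giving $A\Theta \subseteq A$ and, on kernels, $L\Theta \subseteq L$.

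Combining the two steps, $L\Theta$ is admissible and is contained in every admissible sub-bundle, so it is the least one, and I would set $(L\Omega)^\gamma = L\Theta$. It then remains to check $(A\Omega)^\gamma = A\Theta$, which I would do at the level of sections using the defining relation $\Sec((A\Omega)^\gamma) = \{\phi : \phi - \gamma a(\phi) \in \Sec(L\Theta)\}$. For $\phi \in \Sec(A\Theta)$ one has $\gamma a(\phi) \in \Sec(A\Theta)$, so $\phi - \gamma a(\phi) \in \Sec(A\Theta)$ has vanishing anchor and hence lies in $\Sec(L\Theta)$; conversely, if $\phi - \gamma a(\phi) \in \Sec(L\Theta) \subseteq \Sec(A\Theta)$ then adding back $\gamma a(\phi) \in \Sec(A\Theta)$ shows $\phi \in \Sec(A\Theta)$. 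The two conditions coincide, completing the identification. The main obstacle throughout is the integration of the abstractly-defined subalgebroid $A$ to an honest Lie subgroupoid of $\Omega$; everything else is a matter of tracking anchors and bracket computations.
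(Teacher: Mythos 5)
You should know at the outset that the paper itself contains no proof of this theorem: it is imported wholesale from Mackenzie, cited as \cite[Theorem~6.4.20]{MK05}, so your proposal can only be compared with that source and judged on its own merits. Your strategy is the right one and is in the spirit of Mackenzie's treatment: take $L\Theta=\ker(A\Theta)$ as the candidate, check admissibility by observing that $\gamma$ takes values in $A\Theta$ (because every point $c^\Gamma(t)$ of a lift is itself a lift endpoint, so the initial velocities $\gamma(\dot c(0))=\dot c^\Gamma(0)$ lie in $A\Theta$), and get leastness by integrating the subalgebroid $A$ attached to an arbitrary admissible $L$ to a reduction $\Omega'\subseteq\Omega$ and showing the lifts never leave $\Omega'$. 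The anchor bookkeeping in your admissibility check and the final section-level verification that $\Sec((A\Omega)^\gamma)=\Sec(A\Theta)$ are both correct. (One small elision: passing from set-theoretic transitivity of $\Theta$ to transitivity of $A\Theta$, and hence to $L\Theta$ being a Lie algebra bundle, uses the nontrivial fact that a transitive Lie groupoid on a connected base is locally trivial.)

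Two steps, however, are stated more loosely than they can actually be carried out, and both deserve repair. First, the assertion that ``the lift $c^\Gamma$ is tangent to $\Omega'$ throughout; as it starts at $1_{c(0)}\in\Omega'$, it remains in $\Omega'$'' is circular as written: $\Omega'$ is in general only an immersed subgroupoid, and the map $r_{c^\Gamma(t)*}$ carries $A_{c(t)}\Omega'$ into $T\Omega'$ only if one already knows $c^\Gamma(t)\in\Omega'$. The standard fix should be made explicit: since $\gamma$ takes values in $A=A\Omega'$, it is an infinitesimal connection on $\Omega'$, so by the correspondence Proposition of Section~2 it generates a lift $c^{\Gamma'}$ \emph{inside} $\Omega'$; composing with the immersion $\Omega'\hookrightarrow\Omega$ yields a curve satisfying the same differential equation and initial condition as $c^\Gamma$, whence $c^\Gamma=c^{\Gamma'}$ by uniqueness of solutions, and in particular $c^\Gamma(1)\in\Omega'$. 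Second, the inference ``$\Theta\subseteq\Omega'$, giving $A\Theta\subseteq A$'' is not automatic: a mere set inclusion between immersed Lie subgroupoids of $\Omega$ does not yield an inclusion of tangent spaces. One needs the inclusion $\Theta\to\Omega'$ to be smooth, which holds because $\alpha$-connected Lie subgroupoids are initial (alternatively, because the smooth structure on $\Theta$ is generated by the lifts, each of which the previous repair exhibits as a smooth curve in $\Omega'$). Both repairs live inside the Mackenzie reduction machinery you already invoke for the integration step, so the proof goes through, but as submitted these two points are genuine gaps rather than mere terseness.
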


This description may appear somewhat formal, so let us explain
how it works in a familiar situation, that of a linear connection on a
manifold $M$.  We may think of such a connection in various ways:\ as
a covariant derivative; as a law of parallel transport; or as a
horizontal distribution on the tangent bundle $TM$, spanned by local
vector fields
\[
\hlift{\left(\vf{x^i}\right)}=\vf{x^i}-\conn jik u^k\vf{u^j},
\]
in the usual notation.

We first remark that if a vector field on $TM$ takes the 
form
\[
\xi^i\vf{x^i}+\eta^j_ku^k\vf{u^j}
\]
in terms of some coordinates $x^i$ on $M$, where $\xi^i$ and
$\eta^j_k$ are functions of the $x^i$ alone, then (as may easily be
checked) it takes the same form when new coordinates are chosen for
$M$.  We call such a vector field projectable fibre-linear; evidently 
it projects onto $\xi^i\partial/\partial x^i$ on $M$.

To start the groupoid description of the holonomy of a linear
connection we take for the ambient groupoid $\Omega$ the Lie groupoid
whose elements with source $x$ and target $y$ are the (linear) fibre
isomorphisms $T_xM\to T_yM$.  The elements of the Lie algebroid
$A\Omega$ over $x$ are projectable fibre-linear vector fields along
$T_xM\to TM$:\ such a vector field is defined on, but is not
necessarily tangent to, the fibre $T_xM$, and takes the form displayed
above, where now $\xi^i$ and $\eta^j_k$ are constants.  Notice that
{\em points\/} of $(A\Omega)_x$ are {\em vector fields\/} along
$T_xM\to TM$.  The space of projectable fibre-linear vector fields
along $T_xM\to TM$ is finite-dimensional, and $A\Omega$ is indeed a
vector bundle.  Sections of $A\Omega\to M$ are projectable
fibre-linear vector fields on $TM$, and the Lie algebroid bracket is
just the ordinary bracket of vector fields on $TM$ (of course the
bracket of projectable fibre-linear vector fields is projectable
fibre-linear).  The anchor is projection, and the kernel $L\Omega$
consists of vertical fibre-linear vector fields on $TM$.

We turn next to the connection on $\Omega$ corresponding to the given
linear connection.  Let $c\in\Cur(M)$:\ we have to define
$c^\Gamma\in\Cur(\Omega)$.  Thus for each $t\in[0,1]$, $c^\Gamma(t)$
should be a fibre isomorphism $T_{c(0)}M\to T_{c(t)}M$.  Parallel
translation along $c$ from $c(0)$ to $c(t)$ is such a fibre
isomorphism, and if we take this for $c^\Gamma(t)$ we see it satisfies
all the requirements for a connection on $\Omega$. The corresponding 
element of the holonomy groupoid is parallel translation along $c$ 
from $c(0)$ to $c(1)$.

The corresponding infinitesimal connection $\gamma : TM\to A\Omega$ 
is just the horizontal lift:\ for $v\in T_xM$, 
$v=v^i\partial/\partial x^i$
\[
\gamma(v)=\hlift{v}=v^i\left(\vf{x^i}-\conn jik u^k\vf{u^j}\right).
\]
Of course we have to ensure that $\gamma(v)$ is an element of
$(A\Omega)_x$, that is, a projectable fibre-linear vector field
along $T_xM\to TM$ --- which indeed $\hlift{v}$ is.

Now any projectable fibre-linear vector field on $TM$ (section of
$A\Omega\to M$ in other words) can be written uniquely in the form
$\hlift{X}+\vlift{S}$ where $X$ is a vector field and $S$ a type
$(1,1)$ tensor field on $M$, and the `vertical lift' $\vlift{S}$ of
$S$ is given by
\[
\vlift{S}=S^i_ju^j\vf{u^i}.
\]
(One needs the connection to make the vertical part a tensor:\
$\eta^i_j$ in the previous incarnation is not a tensor.)  We can
therefore identify $A\Omega$ with $TM\oplus T^1_1M$.  By
straightforward calculations
\begin{align*}
[\hlift{X},\hlift{Y}]&=\hlift{[X,Y]}-\vlift{R(X,Y)};\\\mbox{}
[\hlift{X},\vlift{T}]&=\vlift{(\nabla_XT)};\\\mbox{}
[\vlift{S},\vlift{T}]&=-\vlift{\{S,T\}},
\end{align*} 
where $R$ is the curvature tensor of the linear connection (so
$R(X,Y)$ is a type $(1,1)$ tensor), and $\{S,T\}$ is the commutator of
$S$ and $T$ (considered as matrices).  So the bracket of sections of
$A\Omega$ is given by
\[
[X\oplus S,Y\oplus T]=
[X,Y]\oplus\left(-R(X,Y)+\nabla_XT-\nabla_YS-\{S,T\}\right).
\] 
In particular, we can identify $L\Omega$ with $T^1_1M$, with bracket
the negative of the commutator.  The objects of interest now are the
Lie algebra sub-bundles of $T^1_1M$ whose section spaces are closed
under covariant derivative and contain all curvature tensors $R(X,Y)$;
the holonomy Lie algebra bundle is the least such.

\section{Fibre metrics}

We now turn to the definition and properties of fibre metrics:\ but
first we prove a result which has an important role to play at several
points in this section.

\begin{prop}\label{interval}
Let $\pi : E \to M$ be a fibre bundle whose standard fibre is compact.
Let $Z$ be any vector field on $E$, with flow $\chi_t$.  Then for any
$x\in M$ there is an open interval $I$ containing 0 such that for all
$u\in E_x$, $\chi_t(u)$ is defined for all $t\in I$. If $Z$ is
projectable to a vector field $\Zbar \in \X(M)$, its flow $\chi_t$ is
projectable to the flow $\chibar_t$ of $\Zbar$; then $\chibar_t(x)$
is also defined for all $t\in I$.
\end{prop}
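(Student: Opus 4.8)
The plan is to exploit two standard facts: that the flow domain of a smooth vector field is open in $\R \times E$, and that $\pi$-related vector fields have $\pi$-related flows. Compactness of the fibre is what will convert pointwise local existence into existence over a single uniform time interval.

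For the first assertion I would argue as follows. The flow of $Z$ is defined on an open set $\mathcal{D} \subset \R \times E$ containing $\{0\} \times E$, with $(t,u) \mapsto \chi_t(u)$ smooth there. Fixing $x \in M$, for each $u \in E_x$ the point $(0,u)$ lies in $\mathcal{D}$, so by openness there are $\ve_u > 0$ and an open neighbourhood $U_u$ of $u$ in $E$ with $(-\ve_u, \ve_u) \times U_u \subset \mathcal{D}$. Since $E_x = \pi^{-1}(x)$ is diffeomorphic to the standard fibre it is compact, so the open cover $\{U_u : u \in E_x\}$ admits a finite subcover $U_{u_1}, \dots, U_{u_n}$. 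I would then set $\ve = \min_i \ve_{u_i}$ and $I = (-\ve, \ve)$: for any $u \in E_x$ we have $u \in U_{u_i}$ for some $i$, whence $(-\ve, \ve) \times \{u\} \subset (-\ve_{u_i}, \ve_{u_i}) \times U_{u_i} \subset \mathcal{D}$, so that $\chi_t(u)$ is defined for every $t \in I$.

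For the second assertion, when $Z$ is projectable to $\Zbar$ the two fields are $\pi$-related, and their flows then satisfy $\pi \circ \chi_t = \chibar_t \circ \pi$ wherever the left-hand side is defined. Given $u \in E_x$ and $t \in I$, the first part guarantees that $\chi_t(u)$ exists, and therefore $\chibar_t(x) = \chibar_t(\pi(u)) = \pi(\chi_t(u))$ exists as well; hence $\chibar_t(x)$ is defined throughout $I$.

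I do not anticipate a serious obstacle. The one step that needs a little care is the appeal to openness of $\mathcal{D}$ to upgrade existence of the flow at a single point $u$ to existence on a whole neighbourhood $U_u$ over a common time interval --- this is exactly what lets the finite-subcover argument deliver a uniform $\ve$. The projectability of the flows is then a formal consequence of $\pi$-relatedness, so the real content sits entirely in the compactness argument of the first part.
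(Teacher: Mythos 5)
Your proof is correct and follows essentially the same route as the paper's: local existence of the flow near each point of the fibre (your openness of the flow domain is the same fact), a finite subcover by compactness of $E_x$, and a uniform interval obtained by intersecting (equivalently, taking the minimum of) the finitely many time intervals. Your explicit treatment of the second assertion via $\pi$-relatedness of the flows is a small bonus, as the paper leaves that part implicit.
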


\begin{proof}
For each $u\in E_x$ there is an open neighbourhood $U_u$ of
$u$ in $E$ and an open interval $I_u$ containing 0 such that
$\chi_t(v)$ is defined for all $t\in I_u$ and $v\in U_u$, and in
particular for all $v\in U_u\cap E_x$.  The open sets $ U_u\cap E_x$
cover $E_x$, so we can find a finite subcover:\ let $I$ be the
intersection of the corresponding open intervals $I_u$.  Then $I$,
being the intersection of a finite number of open intervals, is open,
and contains 0, and for every $v\in E_x$, $\chi_t(v)$ is defined for
all $t\in I$.
\end{proof}

Let $\pi : E \to M$ be a fibre bundle, and $V\pi\to E$ the vertical
sub-bundle of $TE$.  By a type $(0,2)$ fibre tensor on $E$ we mean a
smooth bilinear map $V\pi \times_E
V\pi \to \R$.  If $g$ is a type $(0,2)$ fibre tensor on $E$ then
$g_x$, its restriction to $E_x=\pi^{-1}(x)$, is a type $(0,2)$ tensor
field on $E_x$.  A fibre metric on $E$ is a type $(0,2)$ fibre tensor
for which $g_x$ is a Riemannian metric on $E_x$ for all $x$.

Let $Z$ be a projectable vector field on $E$; then for any vector
field $V$ on $E$ which is vertical over $M$, $[Z,V]$ is vertical.  So
for any vertical vector fields $V$, $W$ on $E$ we may set
\[
\Lie_Zg(V,W)= Z\big( g(V,W) \big) - g \big( [Z,V], W \big) 
- g \big(V, [Z,W] \big).
\]
Clearly, $\Lie_Zg(V,W)$ is $C^\infty(E)$-bilinear in $V$ and $W$, so
this formula defines a type $(0,2)$ fibre tensor $\Lie_Zg$, which is
evidently symmetric.

The operator $\Lie_Z$ is a form of Lie derivative, as can be seen in
two different ways.  In the first place, we can think of $V\mapsto
[Z,V]$ as a Lie derivative of vertical vector fields (identifying the
Lie derivative and Lie bracket in the usual way); then the formula for
$\Lie_Zg$ above can be written
\[
\Lie_Zg(V,W)=\lie_Z\big( g(V,W) \big) 
- g \big( \lie_ZV, W \big) - g \big( V, \lie_ZW \big), 
\]
which mimics the usual method of extending the Lie derivative from
vector fields to tensor fields. This formulation makes it clear that 
to evaluate $\Lie_Zg(V,W)$ at any $u\in E$ we need consider only the 
values of $V$ and $W$ (and of course $g$) along the integral curve of 
$Z$ through $u$. 

Secondly, when the fibres of $E$ are compact we can interpret the
formula in terms of flows, as follows.  Since $Z$ is projectable, to
$\Zbar \in \X(M)$, its flow $\chi_t$ is projectable to the flow
$\chibar_t$ of $\Zbar$.  Given $x \in M$, by
Proposition~\ref{interval} $\chi_t(u)$ is defined for all $u\in E_x$
for $t$ in some open interval $I$ containing 0.  Denote the
restriction of $\chi_t$, $t\in I$, to $E_x$ by $\chi_{x;t}$; thus
$\chi_{x;t}$ is a diffeomorphism of $E_x$ with $E_{\chibar_t(x)}$.
The pullback $\chi_{x;t}^*(g_{\chibar_t(x)})$ is defined for any $t\in
I$, and is another symmetric type $(0,2)$ tensor field on $E_x$.  We
claim that
\[
\dte{\chi_{x;t}^* \big( g_{\chibar_t(x)} \big)} = 
\left(\lie_Zg\right)_x.
\]
To establish the claim, choose vertical vector fields $V_x, W_x$ on 
$E_x$, and extend them along $x_t = \chibar_t(x)$ by Lie transport, 
so that $V_{x_t}=\chi_{x;t*}V_x$. Then
\[
\left(\chi_{x;t}^* g_{x_t}\right)(V_x,W_x)
=g_{x_t}\left(\chi_{x;t*}V_x,\chi_{x;t*}W_x\right)
=g_{x_t}\left(V_{x_t},W_{x_t}\right)
=g(V,W)(x_t).
\]
Thus
\[
\dte{\chi_{x;t}^* g_{x_t}}(V_x,W_x) = 
Z_x\left(g(V,W)\right).
\]
But $\lie_ZV=\lie_ZW=0$ by construction, so in this case
\[
\dte{\chi_{x;t}^* g_{x_t}}(V_x,W_x) = 
\left(\lie_Zg\right)_x(V_x,W_x).
\]
But each of $d/dt(\chi_{x;t}^*g_{x_t})_{t=0}$ and
$\left(\lie_Zg\right)_x$ is a tensor field on $E_x$, and so they are
equal (as tensors).

Of course if $Z_1$ and $Z_2$ are projectable so is $[Z_1,Z_2]$. It 
is easy to see that 
\[
\lie_{Z_1}\left(\lie_{Z_2}g\right)-\lie_{Z_2}\left(\lie_{Z_1}g\right)
=\lie_{[Z_1,Z_2]}g.
\]

So far, so fairly predictable; but now an unexpected feature 
emerges. For any $f\in C^\infty(M)$,
\[
\Lie_{fZ}g(V,W)=
fZ\big( g(V,W) \big) - g \big( [fZ,V], W \big) - g \big( V, [fZ,W] \big)
= f\Lie_Zg(V,W),
\]
since $Vf=Wf=0$.  That is to say, $\Lie_{Z}g(V,W)$ is
$C^\infty(M)$-linear in $Z$.  Thus so far as its dependence on $Z$ is
concerned, $\lie_Zg$ behaves like a covariant derivative rather than a
Lie derivative.  In particular, $(\lie_Zg)_x$ depends only on the
value of $Z$ on $E_x$.  To put things another way, for any vector
field $Z_x$ along $E_x\to E$ which is projectable to
$T_xM$, there is a well-defined symmetric type $(0,2)$ tensor field
$\lie_{Z_x}g$ on $E_x$, given by $\lie_{Z_x}g=(\lie_Zg)_x$ where $Z$
is any vector field on $E$ defined in a neighbourhood of $E_x$ which
agrees with $Z_x$ on $E_x$.  This feature is clear also from the
coordinate representation of $\lie_Zg$.  Take coordinates $x^i$ on $M$
and $u^a$ on the fibre; let
\[
Z=\xi^i\vf{x^i}+\eta^a\vf{u^a},\qquad
g\left(\vf{u^a},\vf{u^b}\right)=g_{ab},
\]
where $\xi^i=\xi^i(x)$; then
\[
(\lie_Zg)_{ab}=\xi^i\fpd{g_{ab}}{x^i}+\eta^c\fpd{g_{ab}}{u^c}
+g_{cb}\fpd{\eta^c}{u^a}+g_{ac}\fpd{\eta^c}{u^b}.
\]
The point to note is that no derivatives of the components of $Z$ with
respect to the $x^i$ appear on the right-hand side.  It is also clear
from this formula, if it wasn't already, that if $Z_x$ is actually
vertical then $\lie_{Z_x}g$ is just the ordinary Lie derivative of 
$g_x$ considered as a tensor field on $E_x$.

We call $\lie_Zg=0$ the isometry equation for (projectable) vector 
fields, and $\lie_{Z_x}g=0$ the isometry equation at $x$. Every 
vector field solution of the isometry equation gives rise to a 
solution of the isometry equation at $x$, but it is not necessarily 
the case that a solution of the isometry equation at $x$ can be 
extended to a vector field solution, even locally. 

Suppose that $Z$ is a solution of the isometry equation; denote its
flow by $\chi_t$, and the flow of its projection $\Zbar$ by
$\chibar_t$.  We know from Proposition~\ref{interval} that for any
$x\in M$, $\chi_{x;t}$ is well-defined for all $t$ in some open
interval $I$ containing 0.  It follows from the fact that $\lie_Zg=0$,
by a standard argument, that
\[
\frac{d}{dt}\left(\chi_{x;t}^* g_{x_t}\right)=0
\]
for all $t\in I$, and hence that $\chi_{x;t}^* g_{x_t}=g_x$.  Thus a
solution of the isometry equation for vector fields is the
infinitesimal generator of fibre isometries of the fibre metric.  We
call such a vector field an infinitesimal fibre isometry of the fibre
metric, and denote the space of infinitesimal fibre isometries by
$\J$.  Then $\J$ is a $C^\infty(M)$-module of vector fields on $E$
which is closed under bracket.

The solutions of the isometry equation at $x$ form a vector space over
$\R$ which we denote by $\J^x$.  Those solutions which are vertical
are simply infinitesimal isometries, or Killing fields, of the metric
$g_x$.  Since a solution of the isometry equation at $x$ is
projectable to $T_xM$, the linear map $\pi_*$ restricts to a linear
map $\J^x\to T_xM$, and we see that the solution space $\J^x$ is of
finite dimension at most $m+\onehalf n(n-1)$ where $m=\dim M$ and
$n=\dim E_x$.  The kernel of $\pi_*|_{\J^x}$, which we denote by
$\K^x$, is the space of infinitesimal isometries of $g_x$, and in
particular is a Lie algebra (under Lie bracket), not just a vector
space.

It is tempting to think of $\J$ as consisting of the sections of a
vector bundle over $M$ whose fibre at $x$ is $\J^x$, but this will not
normally be permissible:\ for example, there is no guarantee that the
spaces $\J^x$ at different points $x$ are isomorphic. We next
describe a situation in which this difficulty does not arise.

Consider a fibre bundle $\pi : E \to M$ whose standard fibre is
compact and whose base is connected, which is equipped with a fibre
metric $g$ such that the map $\pi_*:\J\to\X(M)$, taking each
infinitesimal fibre isometry to its projection on $M$,
is surjective. In such a case we say that $\J$ is transitive. 

\begin{prop}
Let $\pi : E \to M$ be a fibre bundle with compact standard fibre and
connected base, equipped with a fibre metric $g$ such that $\J$ is
transitive.  The fibres of $E$, considered as Riemannian manifolds,
are pairwise isometric.
\end{prop}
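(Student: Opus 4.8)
The plan is to introduce on $M$ the relation $x\sim y$, declared to hold when the Riemannian manifolds $E_x$ and $E_y$ are isometric, and to show that its equivalence classes are open. Since $M$ is connected, there can then be only one class, namely all of $M$, which is the assertion. The mechanism that produces isometries between nearby fibres is the flow of an infinitesimal fibre isometry: we have already established that if $Z\in\J$ then $\chi_{x;t}^*g_{x_t}=g_x$, so the restriction $\chi_{x;t}:E_x\to E_{\chibar_t(x)}$ is an isometry for every $t$ for which it is defined. Transitivity of $\J$ is what guarantees a plentiful supply of such isometries, in enough directions to reach a whole neighbourhood.

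First I would fix $x\in M$ together with coordinates $x^i$ centred at $x$. For each $i$, I would extend $\partial/\partial x^i$ by a bump function to a globally defined vector field $\Xbar_i\in\X(M)$ agreeing with $\partial/\partial x^i$ near $x$; transitivity of $\J$ then supplies an infinitesimal fibre isometry $Z_i$ with $\pi_*Z_i=\Xbar_i$, so that $\pi_*Z_i=\partial/\partial x^i$ in a neighbourhood of $x$. Writing $\chi^i_t$ for the flow of $Z_i$ and $\chibar^i_t$ for the flow of its projection, Proposition~\ref{interval} ensures that, at $x$ and at the finitely many intermediate points arising below, each $\chi^i_{\,\cdot\,;t}$ is defined on the relevant fibre for $t$ in an open interval about $0$, and restricts there to a fibre isometry.

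Next I would compose these flows. Consider the map $(t_1,\dots,t_m)\mapsto\chibar^m_{t_m}\circ\cdots\circ\chibar^1_{t_1}(x)$ from a neighbourhood of $0$ in $\R^m$ to $M$. Since $\chibar^i_0=\id$ and $\pi_*Z_i=\partial/\partial x^i$ near $x$, its differential at the origin is the identity in the chosen coordinates, so by the inverse function theorem it is a diffeomorphism of a neighbourhood of $0$ onto a neighbourhood $U$ of $x$. Given $y\in U$, I would choose $(t_1,\dots,t_m)$ mapping to $y$ and form the corresponding composite of fibre flows $\chi^m_{t_m}\circ\cdots\circ\chi^1_{t_1}$ restricted to $E_x$. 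Each factor is an isometry between consecutive fibres, so the composite is an isometry $E_x\to E_y$; hence $U$ lies in the equivalence class of $x$, and that class is open.

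The main obstacle I expect is not conceptual but one of domains of definition: the lifted fields $Z_i$ need not be complete, so one must verify that every flow appearing in the composite is defined on the whole of the fibre on which it acts, for the relevant small time. This is precisely what Proposition~\ref{interval} delivers, using compactness of the fibre; by shrinking $U$ one keeps all the intermediate base points within ranges on which the successive flows remain defined over entire fibres, so the compositions are legitimate. Once openness of the classes is in hand, connectedness of $M$ completes the proof.
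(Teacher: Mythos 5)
Your argument is sound, but it takes a genuinely different route from the paper's. The paper first treats two points $x,y$ lying on a single integral curve of a vector field $X$: it lifts $X$ to $\tilde{X}\in\J$ using transitivity, covers the parameter interval $[0,s]$ by the open intervals supplied by Proposition~\ref{interval} at the intermediate points $\varphi_r(x)$, extracts a finite subcover, and uses the one-parameter group property to conclude that $\tilde{\varphi}_s:E_x\to E_y$ is defined on all of $E_x$ and hence is an isometry; it then invokes the fact that any two points of connected $M$ can be joined by a piecewise-smooth curve made up of integral-curve segments. You instead prove only a local statement --- that the isometry class of $E_x$ contains every fibre over a neighbourhood of $x$ --- by lifting the $m$ coordinate fields, composing their flows, and applying the inverse function theorem to the composed projected flows, after which connectedness finishes the job by the open-partition argument. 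What your route buys is that only short-time flows are ever needed, and the global connectivity-by-integral-curves fact is replaced by elementary point-set topology; what it costs is the need for flow domains uniform over a continuously varying family of intermediate fibres, which is not literally what Proposition~\ref{interval} states (that is a per-fibre statement). The gap you flag is therefore real but closes by the same compactness device: the domain of the composite map $(t_1,\dots,t_m,u)\mapsto\chi^m_{t_m}\circ\cdots\circ\chi^1_{t_1}(u)$ is open in $\R^m\times E_x$ and contains $\{0\}^m\times E_x$, so compactness of $E_x$ (the tube lemma, i.e.\ the finite-subcover argument in the proof of Proposition~\ref{interval}) yields a uniform cube $(-\ve,\ve)^m$ on which all compositions act on entire fibres; one should fix this $\ve$ \emph{before} applying the inverse function theorem. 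It is also worth remarking that your construction essentially reproduces, in germ, the paper's proof of the subsequent Proposition~\ref{isotriv} (isometric local trivialization via lifted coordinate fields), so by your method that result and the present proposition would naturally be obtained in the opposite order to the paper's.
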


\begin{proof}
Let $x$ and $y$ be points of $M$ that both lie on an integral
curve of some vector field $X$.  We show that $E_x$ and $E_y$ are
isometric.  Let $\varphi_t$ be the flow of $X$, and suppose that
$y=\varphi_s(x)$ (without loss of generality we may assume that
$s>0$).  By assumption there is a vector field $\tilde{X}$ on $E$ such
that $\tilde{X}\in\J$ and $\pi_*\tilde{X}=X$.  Let $\tilde{\varphi}_t$
be the flow of $\tilde{X}$:\ then
$\pi\circ\tilde{\varphi}_t=\varphi_t$, and for any $z\in M$ there is
an open interval $I$ containing 0 such that $\tilde{\varphi}_t$ is an
isometry of $E_z$ with $E_{\varphi_t(z)}$ for all $t\in I$.  This
holds in particular for $z=\varphi_r(x)$ for all $r\in[0,s]$.  So we
have a covering of $[0,s]$ by open intervals say $(r-\delta_r,r+\delta_r)$
on each of which $\tilde{\varphi}_t(E_{\varphi_r(x)})$ is defined.
From this covering we can extract a finite subcovering.  Then using
the one-parameter group property we see that in fact
$\tilde{\varphi}_t(E_x)$ is defined for all $t\in[0,s]$, and therefore
$\tilde{\varphi}_s:E_x\to E_y$ is an isometry (and indeed
$\tilde{\varphi}_r:E_x\to E_{\varphi_r(x)}$ is an isometry for all
$r\in[0,s]$).  Now any pair of points in $M$ can be joined by a
piecewise smooth curve which is made up of segments each of which is
part of the integral curve of some vector field; the general result
follows.
\end{proof}

Since the fibres are all isometric, we can choose a Riemannian 
manifold isometric to each of them, for example any one fibre $E_x$ with 
its metric $g_x$. We denote this representative Riemannian manifold 
by $\E$.

\begin{prop}\label{isotriv}
Let $\pi : E \to M$ be a fibre bundle with compact standard fibre and
connected base, equipped with a fibre metric $g$ such that $\J$ is
transitive.  About every point of $M$ there is a neighbourhood $U$ and
a local trivialization $\tau: U\times \E\to E|_U$, compatible with the
smooth bundle structure, such that for all $x\in U$, the map
$\tau_x:\E\to E_x$ given by $\tau_x=\tau(x,\cdot)$ is an isometry.
\end{prop}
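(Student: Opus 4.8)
The plan is to build the required trivialization by transporting the fibre over a fixed base point to neighbouring fibres using the flows of infinitesimal fibre isometries, which by the discussion preceding the statement restrict to genuine isometries between fibres. Fix $x_0\in M$ and choose coordinates $(x^1,\dots,x^m)$ on a neighbourhood of $x_0$, centred there. Since $\J$ is transitive, for each $i$ I can choose $\tilde X_i\in\J$ whose projection $\pi_*\tilde X_i$ is a vector field on $M$ agreeing with $\partial/\partial x^i$ near $x_0$ (extend $\partial/\partial x^i$ to a global field first, then apply surjectivity of $\pi_*$); write $\tilde\chi^i_t$ for the flow of $\tilde X_i$ and $\varphi^i_t$ for the flow of its projection. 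Because $\tilde X_i$ solves the isometry equation, the restriction of $\tilde\chi^i_t$ to any fibre $E_y$ is an isometry onto $E_{\varphi^i_t(y)}$, this being exactly the statement, established above, that a solution of the isometry equation generates fibre isometries. For a point $x$ with coordinates $(a^1,\dots,a^m)$ I would then set
\[
\tau_x = \tilde\chi^m_{a^m}\circ\cdots\circ\tilde\chi^1_{a^1}.
\]
Since $\tilde\chi^i_{a^i}$ carries the fibre over $(a^1,\dots,a^{i-1},0,\dots,0)$ to the one over $(a^1,\dots,a^i,0,\dots,0)$, the composite is a well-defined map $E_{x_0}\to E_x$, and as a composition of fibre isometries it is itself an isometry. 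Identifying $E_{x_0}$ with $\E$ through a fixed isometry, I define $\tau(x,p)=\tau_x(p)$.

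The substantive point to verify is that, after shrinking the coordinate neighbourhood to a sufficiently small $U$, every flow appearing in the composite is defined for all points of the relevant fibre, so that $\tau_x$ is defined for all $x\in U$ and all $p\in\E$. Here I would appeal to Proposition~\ref{interval}, which guarantees, for each of the finitely many steps and each intermediate fibre, an open interval of times on which the flow is defined over the whole (compact) fibre. Combining these finitely many intervals, using compactness of the fibres, and shrinking $U$ so that all the coordinates $a^i$ lie within the relevant intervals, yields a single neighbourhood on which the full composite is defined. This uniformity over the compact fibre is the main obstacle; once it is secured the rest is essentially bookkeeping.

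It then remains to check that $\tau:U\times\E\to E|_U$ is a local trivialization compatible with the smooth structure. Smoothness of $\tau$ in $(x,p)$ follows from the smooth dependence of flows on their initial conditions and on the times $a^i$, since $\tau$ is a composition of such flow maps. By construction $\pi\circ\tau(x,p)=x$, and each $\tau_x$ is an isometry and hence a diffeomorphism of $\E$ onto $E_x$; the inverse $e\mapsto(\pi(e),\tau_{\pi(e)}^{-1}(e))$ is given by the corresponding composite of reverse flows and is likewise smooth. Hence $\tau$ is a diffeomorphism onto $E|_U$ and a bundle trivialization, and by the construction of $\tau_x$ each map $\tau_x:\E\to E_x$ is an isometry, which is precisely what is required.
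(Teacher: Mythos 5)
Your proof is correct in substance and rests on the same mechanism as the paper's --- transporting the fibre over the centre point to nearby fibres by flows of elements of $\J$ lifting the coordinate fields --- but the path you transport along is different, and this changes where the technical weight falls. The paper uses a ray construction: for each $x$ with coordinates $x^i(x)$ it forms the \emph{single} field $x^i(x)\tilde{X}_i$, which lies in $\J$ because $\J$ is an $\R$-vector space, and takes $\sigma_x:E_o\to E_x$ to be its time-one flow along the ray from the origin to $x$; smoothness of $\tau$ in $(x,u)$ then comes from smooth dependence of ODE solutions on the parameters $x^i(x)$. You instead compose $m$ flows of the \emph{fixed} fields $\tilde{X}_i$ with variable times $a^i$ (a staircase path), and get smoothness from joint smoothness of each flow map in the time and the initial point; both mechanisms are sound, and yours needs no appeal to the linear structure of $\J$. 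The one place where your argument is looser than it should be is the domain question. Proposition~\ref{interval} is a statement about a single fibre, but your second and subsequent flows act on intermediate fibres $E_{(a^1,\dots,a^{i-1},0,\dots,0)}$ that vary with $x$, so ``combining finitely many intervals'' is really combining a family of intervals parameterized by $x$, and you need a positive lower bound on their lengths. That bound does hold: run the flow-box-plus-finite-subcover argument of Proposition~\ref{interval} on the compact set $E|_K$ for a compact coordinate ball $K$ (a union of compact fibres over a compact base set is compact). Alternatively you can avoid shrinking $U$ altogether by quoting the stronger fact established in the proof of the preceding proposition (pairwise isometric fibres): for $\tilde{X}\in\J$ the fibre flow out of $E_y$ is defined on the whole fibre for every $t$ for which the projected flow through $y$ is defined. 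The paper's ray construction implicitly leans on exactly that fact, which is why it never needs to shrink its coordinate ball.
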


\begin{proof}
Fix a point in $M$ and take a coordinate neighbourhood $U$ with this
point as origin; we shall denote it by $o$.  Without loss of generality
we may assume that the image of $U$ is the open unit ball in $\R^m$,
and we may further assume that $U$ is contained in some neighbourhood
over which $E$ is locally trivial.  Since $\J$ is a
$C^\infty(M)$-module, with each coordinate field
$X_i=\partial/\partial x^i$ we may associate a vector field
$\tilde{X}_i$ on $E$ such that $\tilde{X_i}\in\J$ and
$\pi_*\tilde{X_i}=X_i$ on $U$.  For each $x\in U$ let $r_{x}$ be the
ray joining the origin to $x$ with respect to the coordinates:\ it is
an integral curve of $x^i(x)X_i$, where the $x^i(x)$ are the coordinates
of $x$ (which of course are to be treated as constants).  Since $\J$
is a vector space over $\R$, $x^i(x)\tilde{X}_i\in\J$.  Let
$\sigma_x:E_o\to E_x$ be the isometry determined by
$x^i(x)\tilde{X}_i$.  Let $u^a$ be fibre coordinates on $E$ over $U$.
Then
\[
\tilde{X}_i=\vf{x^i}+N_i^a\vf{u^a}
\]
for certain functions $N_i^a$ on $E|_U$, and so
\[
x^i(x)\tilde{X}_i=x^i(x)\left(\vf{x^i}+N_i^a\vf{u^a}\right).
\]
Thus for any $u\in E_o$, $u=(0,u^a)$, the fibre coordinates of
$\sigma_x(u)$ are determined as the solution of the system of ordinary
differential equations
\[
\dot{u}^a=x^i(x)N_i^a(tx^j(x),u^b),\quad u^a(0)=u^a,
\]
in which the coordinates $x^i(x)$ play the role of parameters.  But the
solutions of such equations depend smoothly on parameters, so
$\sigma_x(u)$ depends smoothly on the coordinates $(x^i,u^a)$.  Now
let $\phi:\E\to E_o$ be an isometry.  We define $\tau: U\times \E\to
E|_U$ by $\tau(x,v)=\sigma_x(\phi(v))$ for $v\in\E$.  Then $\tau$ is a
local trivialization, compatible with the smooth bundle structure,
such that for all $x\in U$, $\tau_x=\sigma_x\circ\phi:\E\to E_x$ is an
isometry.
\end{proof}

For any bundle $E\to M$ with fibre metric $g$ the set of isometries 
between fibres is evidently a groupoid, which we call the 
fibre-isometry groupoid (the precise definition is the first 
paragraph of the proof of the following theorem).

\begin{thm}\label{isogpoid}
The fibre-isometry groupoid of a fibre bundle $\pi : E \to M$ with
compact standard fibre and connected base, equipped with a fibre
metric $g$ such that $\J$ is transitive, is a locally trivial Lie groupoid.  The
corresponding Lie algebroid has the spaces $\J^x$ for its fibres and
the space $\J$ as its sections.
\end{thm}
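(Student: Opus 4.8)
The plan is to build the groupoid structure from the isometric local trivializations of Proposition~\ref{isotriv} together with the Myers--Steenrod theorem, in the same way one produces a locally trivial Lie groupoid from a reduction of structure group. Write $\Omega$ for the fibre-isometry groupoid, with source $\alpha$, target $\beta$ and composition the composition of maps. Since the representative fibre $\E$ is compact, Myers--Steenrod gives that its isometry group $G=\mathrm{Isom}(\E)$ is a compact Lie group, whose Lie algebra $\mathfrak g$ is the space of Killing fields of $\E$. First I would use a trivialization $\tau:U\times\E\to E|_U$ to coordinatize $\Omega$: for $x,y\in U$ each isometry $\psi:E_x\to E_y$ is sent to $\tau_y^{-1}\circ\psi\circ\tau_x\in G$, giving a bijection of the set of $\psi$ with $\alpha(\psi),\beta(\psi)\in U$ onto $U\times G\times U$.

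The crux is the compatibility of these charts. On the overlap of two charts coming from trivializations $\tau,\tau'$, the transition is governed by the assignment $x\mapsto(\tau'_x)^{-1}\circ\tau_x\in G$, and I would check that this is a smooth $G$-valued map, using the smooth dependence of $\tau_x$ on $x$ built into Proposition~\ref{isotriv} together with the fact that the Lie-group structure on $G$ furnished by Myers--Steenrod is compatible with smooth families of isometries. This is the step I expect to be the main obstacle: once one knows that fibre-isometries varying smoothly in parameters read as smooth curves in $G$, everything else is formal. Granting it, the charts endow $\Omega$ with a smooth structure for which the structure maps are smooth and $(\alpha,\beta)$ is a submersion, so $\Omega$ is a locally trivial Lie groupoid; equivalently, $\Omega$ is the gauge groupoid of the principal $G$-bundle $x\mapsto\mathrm{Isom}(\E,E_x)$, in the sense of~\cite{MK05}.

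For the Lie algebroid, recall (as in the linear example above) that $A\Omega_x$ is the tangent space at the identity $1_x$ to the source-fibre $\alpha^{-1}(x)$, represented by vector fields along $E_x\to E$: a tangent vector is the velocity of a curve $t\mapsto\psi_t$ of isometries $E_x\to E_{c(t)}$ with $\psi_0=\id$, and its value at $u\in E_x$ is $\dte{\psi_t(u)}$. I would show $A\Omega_x\subseteq\J^x$ by differentiating the identity $\psi_t^*g_{c(t)}=g_x$ at $t=0$ and invoking the formula $\dte{\chi_{x;t}^*g_{x_t}}=(\lie_Zg)_x$ established earlier, which yields $\lie_{Z_x}g=0$ for the representing field $Z_x$. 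The reverse inclusion I would obtain by a dimension count rather than by extending fields to elements of $\J$ (which, as already noted, may be impossible): the anchor $A\Omega_x\to T_xM$ is onto with kernel $L\Omega_x=\mathrm{Lie}(\Omega_{x,x})=\K^x$, while $\pi_*:\J^x\to T_xM$ is onto by transitivity of $\J$ with the same kernel $\K^x$, so $\dim A\Omega_x=m+\dim\K^x=\dim\J^x$ and the inclusion is an equality.

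Finally, for the sections, a smooth section of $A\Omega$ assigns to each $x$ an element of $\J^x=A\Omega_x$, and these assemble into a vector field $Z$ on $E$ with $Z|_{E_x}\in\J^x$, whence $\lie_Zg=0$ fibrewise and $Z\in\J$; conversely each $Z\in\J$ restricts to the section $x\mapsto Z|_{E_x}$. The equivalence between smoothness of $Z$ on $E$ and smoothness of the section is read off in the trivialization $\tau$, identifying $\Sec(A\Omega)$ with $\J$. Under this identification the algebroid bracket is the ordinary bracket of vector fields on $E$, consistent with $\J$ being closed under bracket.
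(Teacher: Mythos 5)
Your construction of the Lie groupoid structure follows essentially the same route as the paper: charts $\Omega_{U_\mu,U_\lambda}\to U_\mu\times\G(\E)\times U_\lambda$ built from the isometric trivializations of Proposition~\ref{isotriv}, with compatibility reduced to smoothness of the transition maps $x\mapsto(\tau'_x)^{-1}\circ\tau_x$ into the isometry group of the model fibre; the paper, like you, ultimately rests on the fact that a smoothly parametrized family of isometries of the compact manifold $\E$ reads as a smooth map into the Lie group $\G(\E)$, so your treatment of this step is at the same level of rigour as the paper's. Where you genuinely diverge is in proving $A\Omega_x=\J^x$. The inclusion $A\Omega_x\subseteq\J^x$ is the same computation in both (differentiate $\psi_t^*g_{c(t)}=g_x$ at $t=0$; note only that the flow formula established earlier in the paper was proved for flows of vector fields, so you need the standard remark that the derivative depends only on the velocity field of the family --- which is exactly what the paper's coordinate computation establishes). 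For the reverse inclusion the paper gives an explicit construction: given $Z\in\J^x$, it uses transitivity of $\J$ to choose $\tilde X\in\J$ projecting onto a vector field through $\pi_*Z$, corrects it by the Lie transport along the flow of $\tilde X$ of the vertical Killing field $Z-\tilde X_x$, and observes that the resulting curve of isometries has initial velocity $Z$ (modulo an inconsequential sign slip in the paper, which writes $\tilde X_x-Z$ for the correction). You instead argue by a dimension count: local triviality makes $A\Omega$ transitive, so $\dim A\Omega_x=m+\dim L\Omega_x$, and $L\Omega_x=\mathrm{Lie}(\Omega_{x,x})=\K^x$ because every Killing field of the compact fibre is complete and integrates to a one-parameter group of isometries (Myers--Steenrod), while transitivity of $\J$ gives $\dim\J^x=m+\dim\K^x$; equality of finite dimensions then upgrades the inclusion to an equality. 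This is correct and shorter, and it neatly sidesteps the delicate construction of a curve of isometries with prescribed velocity; what it costs is reliance on the classical theorem that the isometry group of a compact Riemannian manifold is a Lie group whose Lie algebra is the \emph{full} Killing algebra, whereas the paper's argument is self-contained apart from Proposition~\ref{interval} and exhibits each element of $\J^x$ concretely as tangent to a curve in the groupoid. Both your proposal and the paper treat the final statement about sections and the bracket at the same light level of detail, reading it off from the trivializations.
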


\begin{proof}
Given the bundle $\pi : E\to M$, we let $\Omega_{y,x}$ denote the set
of isometries from the fibre $E_x$ to the fibre $E_y$, and let $\Omega
= \bigcup_{x,y \in M} \Omega_{y,x}$.  If $\theta \in \Omega_{y,x}$ put
$\alpha(\theta) = x$ and $\beta(\theta) = y$; this defines maps $\alpha,
\beta : \Omega \to M$.  Let $1_x : E_x \to E_x$ denote the identity
isometry; then $1 : x \mapsto 1_x$ maps $M$ to $\Omega$.  Finally, let
the partial multiplication be composition of maps, and the inverse be
the usual inverse of maps.  It is immediate that, with these
definitions, $\Omega$ is a groupoid.

To give $\Omega$ a smooth structure we modify the technique used in
\cite[Example~1.1.12]{MK05}.  Let $\{U_\lambda\}$, for $\lambda$ in
some indexing set $\Lambda$, be a covering of $M$ by sets of the kind
defined in Proposition~\ref{isotriv}. Let $\tau_\lambda :
U_\lambda \times \E \to \eval{E}{U_\lambda}$ be the corresponding
isometric local trivialisation of $\eval{E}{U_\lambda}$, and for 
$x\in U_\lambda$ set $\tau_{\lambda,x}=\tau_\lambda(x,\cdot):\E\to E_x$.  Let
\[
\Omega_{U_\mu, U_\lambda} = \bigcup_{x \in U_\lambda, y \in U_\mu} \Omega_{y,x}
\]
and define
\[
\psi_{\mu, \lambda} : 
\Omega_{U_\mu, U_\lambda} \to U_\mu \times \G(\E) \times U_\lambda,
\]
where $\G(\E)$ is the isometry group of $\E$ (a Lie group), as follows.
If $\theta \in \Omega_{U_\mu, U_\lambda}$ then $\theta
\in \Omega_{y,x}$ for some $x \in U_\lambda$ and $y \in U_\mu$.  Thus
$\big(\tau_{\mu,y}\big)^{-1} \circ \theta \circ \tau_{\lambda,x}$
is a map $\E \to \E$, which is evidently an isometry, say $G \in
\G(\E)$.  We may therefore put
\[
\psi_{\mu, \lambda}(\theta) = (y,G,x),
\]
and it is straightforward to see that this defines a bijection.

To show that $\Omega$ has a smooth structure, we need to show that
if $\psi_{\mu_2, \lambda_2} \circ \big( \psi_{\mu_1, \lambda_1}
\big)^{-1}$ has a non-empty domain then it is smooth.  The non-empty
domain condition is that
\[
\big( \psi_{\mu_1, \lambda_1} \big)^{-1}(U_{\mu_1} \times \G(\E) 
\times U_{\lambda_1}) \; \cap \; \eval{\Omega}{U_{\mu_2}, U_{\lambda_2}} 
\ne \varnothing,
\]
which translates as
\[
\Omega_{U_{\mu_1}, U_{\lambda_1}} \; \cap \; 
\Omega_{U_{\mu_2}, U_{\lambda_2}} \ne \varnothing,
\]
in other words that $U_{\lambda_1} \cap U_{\lambda_2} \ne \varnothing$
and $U_{\mu_1} \cap U_{\mu_2} \ne \varnothing$.  If this condition
holds then the domain of $\psi_{\mu_2, \lambda_2} \circ \big(
\psi_{\mu_1, \lambda_1} \big)^{-1}$ is
\[
(U_{\mu_1} \cap U_{\mu_2}) \times \G(\E) \times (U_{\lambda_1} 
\cap U_{\lambda_2}).
\]
If $(y,G,x)$ is in the domain then 
$\big(\psi_{\mu_1,\lambda_1}\big)^{-1}(y,G,x)$ is the isometry 
$\tau_{\mu_1,y}\circ G\circ\big(\tau_{\lambda_1,x}\big)^{-1}:E_x\to E_y$. Thus 
\begin{align*}
&\psi_{\mu_2, \lambda_2} \circ \big( \psi_{\mu_1, \lambda_1} \big)^{-1}(y,G,x)
 = \psi_{\mu_2, \lambda_2} 
\left(\tau_{\mu_1,y}\circ G\circ\big(\tau_{\lambda_1,x}\big)^{-1}\right)\\
&\qquad 
=\left(y,\big(\big(\tau_{\mu_2,y}\big)^{-1}\circ\tau_{\mu_1,y}\big)\circ G
\circ\big(\big(\tau_{\lambda_1,x}\big)^{-1}\circ\tau_{\lambda_2,x}\big),x\right).
\end{align*}
Note that
$\big(\tau_{\lambda_1,x}\big)^{-1}\circ\tau_{\lambda_2,x}\in\G(\E)$,
and also $\big(\tau_{\mu_2,y}\big)^{-1}\circ\tau_{\mu_1,y}\in\G(\E)$,
so the central term above is an element of $\G(\E)$, as required.
Moreover,
$x\mapsto\big(\tau_{\lambda_1,x}\big)^{-1}\circ\tau_{\lambda_2,x}$ is
the transition function $U_{\lambda_1}\cap U_{\lambda_2}\to\G(\E)$ for
the bundle structure on $\pi:E\to M$ defined in
Proposition~\ref{isotriv}, and likewise $y\mapsto
\big(\tau_{\mu_1,y}\big)^{-1}\circ\tau_{\mu_2,y}$ is the transition
function $U_{\mu_1}\cap U_{\mu_2}\to\G(\E)$.  So $\psi_{\mu_2,
\lambda_2} \circ\big( \psi_{\mu_1, \lambda_1}\big)^{-1}$ is
constructed from smooth maps by smooth operations, and therefore is
smooth.

Now $U_\lambda,U_\mu\subset M$ are coordinate patches, by assumption.
Taking a coordinate patch $V\in\G(\E)$ we obtain a coordinate patch
$\big(\psi_{\mu, \lambda}\big)^{-1}(U_\mu \times V \times U_\lambda)$
on $\Omega$.  The transition functions for such patches are smooth as a
consequence of the immediately preceding result about $\psi_{\mu_2,
\lambda_2} \circ \big( \psi_{\mu_1, \lambda_1} \big)^{-1}$.  Thus
$\Omega$ is a differentiable manifold of dimension $2m+d$, $d=\dim
\G(\E)$.

It is now straightforward to show that the structure maps of the
groupoid are smooth and that the source and target projections are
surjective submersions, both separately and as the pair $(\alpha,\beta)$, 
so we conclude that $\Omega$ is a locally trivial Lie groupoid
of dimension $2m+d$. 

We turn now to the Lie algebroid of $\Omega$, which we denote by
$A\Omega$.  We can identify points of $A\Omega$ over $x\in M$ with
tangent vectors at $t=0$ to curves $\kappa$ in the $\alpha$ fibre of
$\Omega$ over $x$ such that $\kappa(0)=1_x$.  For such a curve
$\alpha(\kappa(t))=x$, while $\beta(\kappa(t))=\overline{\kappa}(t)$
say is a smooth curve in $M$; so $\kappa(t)$ is an isometry of $E_x$
with $E_{\overline{\kappa}(t)}$ and $\kappa(0)$ is the identity map of
$E_x$.  Since $\kappa(t)$ is an isometry, for every $u\in E_x$
\[
\kappa(t)^*g_{\kappa(t)u}=g_u,
\]
or in terms of fibre coordinate fields
\[
g_{\kappa(t)u}\left(\kappa(t)_*\vf{u^a},\kappa(t)_*\vf{u^b}\right)=g_{ab}(u).
\]
Let us set $\kappa(t)u=(\overline{\kappa}^i(t),\kappa^a(t,u))$; then 
\[
\kappa(t)_*\left(\eval{\vf{u^a}}{u}\right)=\fpd{\kappa^b}{u^a}(t,u)
\eval{\vf{u^b}}{\kappa(t)u},
\]
whence
\[
g_{\kappa(t)u}\left(\kappa(t)_*\vf{u^a},\kappa(t)_*\vf{u^b}\right)=
\fpd{\kappa^c}{u^a}(t,u)\fpd{\kappa^d}{u^b}(t,u)g_{cd}(\kappa(t)u).
\]
Note that since $\kappa(0)$ is the identity, 
\[
\fpd{\kappa^c}{u^a}(0,u)=\delta^c_a.
\]
The tangent vector $Z_u$ to the curve $t\mapsto\kappa(t)u$ at $t=0$ is
\[
\frac{d\bar{\kappa}^i}{dt}(0)\vf{x^i}+\fpd{\kappa^a}{t}(0,u)\vf{u^a}
=\xi^i\vf{x^i}+\eta^a(u)\vf{u^a}
\]
say. Then $Z$ is a vector field along $E_x\to E$, 
which projects onto a vector at $x\in M$, namely the initial tangent 
vector to $\overline{\kappa}$. By differentiating the isometry condition
\[
\fpd{\kappa^c}{u^a}(t,u)\fpd{\kappa^d}{u^b}(t,u)g_{cd}(\kappa(t)u)=
g_{ab}(u)
\]
with respect to $t$ and setting $t=0$ we obtain
\[
\xi^i\fpd{g_{ab}}{x^i}+\eta^c\fpd{g_{ab}}{u^c}
+g_{cb}\fpd{\eta^c}{u^a}+g_{ac}\fpd{\eta^c}{u^b}=0.
\]
This is just the condition for $Z$ to belong to $\J^x$.

Conversely, suppose that $Z\in\J^x$.  Let $X$ be any vector field
on $M$ such that $X_x=\pi_*Z$.  Let $\tilde{X}$ be a vector field
on $E$ such that $\tilde{X}\in\I$ and $\pi_*\tilde{X}=X$.  Then
$\tilde{X}_x-Z$ is a vertical vector field on $E_x$ belonging to
$\J^x$, and is therefore an infinitesimal isometry of $g_x$.  Let
$\tilde{\varphi}_t$ be the flow of $\tilde{X}$.  By
Proposition~\ref{interval} there is an open interval containing 0 such
that $\tilde{\varphi}_t:E_x\to E_{\varphi_t(x)}$ is an isometry for
all $t\in I$, where $\varphi_t=\pi\circ\tilde{\varphi}_t$.  Set
$V(t)=\tilde{\varphi}_{t*}(\tilde{X}_x-Z)$:\ since
$\tilde{\varphi}_t$ is an isometry, $V(t)$ is a vertical vector
field which is an infinitesimal isometry of $g_{\varphi_t(x)}$.  
Consider $\tilde{X}+V$:\ this is a vector field over the curve
$\varphi_t(x)$ which projects onto $X$ and belongs to
$\J^{\varphi_t(x)}$ for all $t\in I$.  It generates a curve of
isometries $E_x\to E_{\varphi_t(x)}$, defined on some open interval
containing 0 (possibly smaller than $I$), whose tangent at $t=0$ is
just $Z$, as required.
\end{proof}

When the conditions of Theorem~\ref{isogpoid} apply we shall denote
the Lie algebroid of the fibre-isometry groupoid by $\I$.  Then $\I$
is a vector bundle over $M$, whose fibre $\I_x$ over $X\in M$ consists
of the vector fields along $E_x\to E$ which satisfy the
isometry equation at $x$.  We denote by $\Sec(\I)$ the
$C^\infty(M)$-module of sections of $\I$; elements of $\Sec(\I)$ are
vector fields on $E$ which satisfy the isometry equation.  The anchor
is $\pi_*|_\I$, and the bracket is just the bracket of vector fields
on $E$, restricted of course to elements of $\Sec(\I)$.  We denote
the kernel by $\K\to M$; it is a Lie algebra bundle, and for each
$x\in M$ the fibre $\K_x$ is the Lie algebra of infinitesimal
isometries of the Riemannian manifold $(E_x,g_x)$. 

We have the following short exact sequence of vector bundles over $M$:
\[
0\to\K\to\I\to TM\to 0.
\]
We now specialise further to the case where this sequence splits; that
is to say, we suppose that there is a linear bundle map $\gamma:TM\to
\I$ over the identity of $M$ such that $\pi_*\circ\gamma=\id_{TM}$.
Such a splitting is an infinitesimal connection on $\I$, in the
terminology of Section~2. There is a corresponding connection 
$\Gamma$ on the fibre-isometry groupoid. We now set out to identify its 
holonomy Lie algebra bundle.

\begin{prop}\label{connK}
Suppose we have an infinitesimal connection $\gamma$ on $\I$. For any 
$X\in\X(M)$ and $V\in\Sec(\K)$ set  
\[
\nabla^\gamma_XV=[\gamma(X),V].
\]
Then $\nabla^\gamma$ is a covariant differentiation operator on
$\Sec(\K)$.  Furthermore, $\nabla^\gamma_X$ is a derivation of the
bracket:
\[
\nabla^\gamma_X[V,W]=[\nabla^\gamma_XV,W]+[V,\nabla^\gamma_XW].
\]
\end{prop}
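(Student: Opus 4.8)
The plan is to verify the connection axioms directly from the defining formula $\nabla^\gamma_X V = [\gamma(X), V]$, treating $\gamma(X)$ and $V$ as vector fields on $E$ and using that the bracket on $\Sec(\I)$ is the ordinary bracket of vector fields. The first thing I would settle is well-definedness: that $[\gamma(X), V]$ is actually a section of $\K$. Since $\gamma(X)$ projects to $X$ and $V\in\Sec(\K)$ projects to $0$, the bracket projects to $[X,0]=0$ and so is vertical; and since $\gamma(X)$ and $V$ both lie in $\Sec(\I)$, which is closed under bracket, $[\gamma(X),V]\in\Sec(\I)$. Being vertical as well, it lies in the kernel $\Sec(\K)$.

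Next I would check the three covariant-derivative axioms. Additivity in each argument is immediate from bilinearity of the bracket. For $C^\infty(M)$-linearity in $X$, I would use that $\gamma$ is a bundle map over the identity, so $\gamma(fX)=f\gamma(X)$, and then expand $[f\gamma(X),V]=f[\gamma(X),V]-(Vf)\gamma(X)$ for $f\in C^\infty(M)$; here $f$ is constant on the fibres and $V$ is vertical, so $Vf=0$ and tensoriality follows. For the Leibniz rule I would expand $[\gamma(X),fV]=f[\gamma(X),V]+(\gamma(X)f)V$ and use that $\gamma(X)$ projects to $X$, so that $\gamma(X)f=Xf$ for $f\in C^\infty(M)$, giving $\nabla^\gamma_X(fV)=f\nabla^\gamma_XV+(Xf)V$. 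These two computations are exactly where the geometry enters: verticality of kernel sections is what removes the obstruction to tensoriality in $X$, while projectability of $\gamma(X)$ produces the correct Leibniz term.

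For the derivation property I first note that $[V,W]\in\Sec(\K)$ because $\K$ is a Lie algebra bundle, closed under bracket, so the left-hand side $\nabla^\gamma_X[V,W]$ is defined. The identity is then nothing more than the Jacobi identity for the vector-field bracket: taking the three arguments to be $\gamma(X)$, $V$, $W$,
\[
[\gamma(X),[V,W]]=[[\gamma(X),V],W]+[V,[\gamma(X),W]],
\]
which is precisely $\nabla^\gamma_X[V,W]=[\nabla^\gamma_XV,W]+[V,\nabla^\gamma_XW]$.

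The argument is routine, and the only step needing genuine care is the well-definedness together with the tensoriality in $X$; both hinge on the fact that sections of $\K$ are vertical. The derivation property, by contrast, is a formal consequence of the Jacobi identity and requires no special structure.
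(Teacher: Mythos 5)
Your proof is correct and follows essentially the same route as the paper's: well-definedness via closure of $\Sec(\I)$ under bracket together with verticality, the standard bracket expansions (using $Vf=0$ for vertical $V$ and $\gamma(X)f=Xf$ for projectable $\gamma(X)$) for tensoriality in $X$ and the Leibniz rule, and the Jacobi identity for the derivation property. The only difference is that you spell out the computations the paper dismisses as evident, which is fine.
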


\begin{proof}
Since $\gamma(X)\in\Sec(\I)$ and $\Sec(\I)$ is closed under 
bracket, $\nabla^\gamma_XV\in\Sec(\I)$. But  
$\nabla^\gamma_XV$ is clearly vertical, so 
$\nabla^\gamma_XV\in\Sec(\K)$. Evidently $\nabla^\gamma_XV$
is $\R$-linear in both arguments, and 
for $f\in C^\infty(M)$, 
\[
\nabla^\gamma_{fX}V=f\nabla^\gamma_XV,\qquad
\nabla^\gamma_X(fV)=f\nabla^\gamma_XV+X(f)V.
\]
The fact that $\nabla^\gamma_X$ is a derivation of the bracket 
follows from the Jacobi identity.
\end{proof} 

The conclusion that $\nabla^\gamma_XV\in\Sec(\K)$ for all
$X\in\X(M)$ and $V\in\Sec(\K)$ is abbreviated to
$\nabla^\gamma(\Sec(\K))\subset\Sec(\K)$.

A connection for which the derivation property is satisfied is called
a Lie connection.  It is a consequence of the fact that the connection
is Lie that the parallel translation operator $\tau^\gamma_c$
corresponding to $\nabla^\gamma$ along a curve $c$ in $M$ from $x$ to
$y$ is an isomorphism of Lie algebras $\K_x\to\K_y$ (see
Proposition~\ref{Lie} in Appendix~2).

For any $X,Y\in\X(M)$ we set
\[
R^\gamma(X,Y)=\gamma([X,Y])-[\gamma(X),\gamma(Y)].
\]
Then $R^\gamma(X,Y)$ is a vertical vector field on $E$ which depends
$C^\infty(M)$-linearly on $X$ and $Y$, so for any $x\in M$ and any
$v,w\in T_xM$, $R^\gamma_x(v,w)$ is a well-defined vertical vector
field on the fibre $E_x$.  We call $R^\gamma$ the curvature of
$\gamma$ and we call $R^\gamma_x(v,w)$ a curvature vector field at
$x$.

\begin{prop}\label{curvK}
For all $X,Y\in\X(M)$, $R^\gamma(X,Y)\in\Sec(\K)$.
\end{prop}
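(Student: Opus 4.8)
The plan is to split the claim into two parts: first that $R^\gamma(X,Y)$ is a section of $\I$, and second that it is vertical. Since $\K=\ker(\pi_*|_\I)$ is precisely the vertical part of $\I$, these two facts together yield $R^\gamma(X,Y)\in\Sec(\K)$.

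For the first part, I would use that the connection $\gamma$ takes values in $\I$, so that $\gamma(X)$, $\gamma(Y)$ and $\gamma([X,Y])$ are all sections of $\I$. The essential input is that $\Sec(\I)=\J$ is closed under the bracket of vector fields on $E$, a fact established when $\J$ was first introduced. Consequently $[\gamma(X),\gamma(Y)]\in\Sec(\I)$, and therefore $R^\gamma(X,Y)=\gamma([X,Y])-[\gamma(X),\gamma(Y)]$, being a difference of sections of $\I$, is again a section of $\I$.

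For the second part, I would apply the anchor $\pi_*$ and use that the bracket of projectable vector fields projects onto the bracket of their projections, i.e. $\pi_*[\gamma(X),\gamma(Y)]=[\pi_*\gamma(X),\pi_*\gamma(Y)]=[X,Y]$, the last equality holding because $\pi_*\circ\gamma=\id_{TM}$. Since also $\pi_*\gamma([X,Y])=[X,Y]$, we obtain $\pi_*R^\gamma(X,Y)=[X,Y]-[X,Y]=0$, so $R^\gamma(X,Y)$ is vertical. This verticality was in fact already noted in the paragraph preceding the statement, and may simply be cited rather than rederived.

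There is no real obstacle here: the entire content is carried by the closure of $\Sec(\I)$ under bracket, which is the Lie algebroid property established earlier, together with the compatibility $\pi_*\circ\gamma=\id_{TM}$ built into the definition of an infinitesimal connection. The only point requiring care is conceptual rather than computational, namely being clear that $\K$ is exactly the intersection of $\Sec(\I)$ with the vertical vector fields, so that the two established facts combine to place $R^\gamma(X,Y)$ in $\Sec(\K)$.
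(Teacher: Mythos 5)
Your proposal is correct and follows essentially the same route as the paper: closure of $\Sec(\I)$ under the bracket of vector fields puts both $\gamma([X,Y])$ and $[\gamma(X),\gamma(Y)]$ in $\Sec(\I)$, and verticality of their difference then places $R^\gamma(X,Y)$ in $\Sec(\K)$. Your explicit verification of verticality via $\pi_*\circ\gamma=\id_{TM}$ is a harmless elaboration of what the paper simply asserts (having noted it in the paragraph preceding the proposition).
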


\begin{proof}
Since $\Sec(\I)$ is closed under bracket,
$[\gamma(X),\gamma(Y)]\in\Sec(\I)$.  Thus both $\gamma([X,Y])$ and
$[\gamma(X),\gamma(Y)]$ belong to $\Sec(\I)$; their difference,
which is vertical, therefore belongs to $\Sec(\K)$.
\end{proof}

\begin{cor}
For every $x\in M$ and $v,w\in T_xM$, $R^\gamma_x(v,w)\in\K_x$.
\end{cor}

We can also compute the curvature of $\nabla^\gamma$; we obtain
\begin{align*}
\nabla^\gamma_X\nabla^\gamma_YV-\nabla^\gamma_Y\nabla^\gamma_XV
-\nabla^\gamma_{[X,Y]}V
&=[V,\gamma([X,Y])-[\gamma(X),\gamma(Y)]]\\
&=[V,R^\gamma(X,Y)].
\end{align*}

Due to the existence of the splitting we have $\I\equiv TM\oplus\K$.  Thus
sections of $\I\to M$ are vector fields on $E$ of the form
$\gamma(X)+V$ with $V\in\Sec(\K)$.  In terms of the direct sum
decomposition the algebroid bracket is given by
\[
[X\oplus V,Y\oplus W]=[X,Y]\oplus(-R^\gamma(X,Y)+\nabla^\gamma_XW
-\nabla^\gamma_YV+[V,W]).
\]
Every term in the second component on the right belongs to
$\Sec(\K)$.

By Theorem~\ref{least}, the
holonomy Lie algebra bundle is the least Lie algebra sub-bundle
$\mathfrak{H}$ of $\K$ which contains all curvature vector fields
(that is, such that for all $x\in M$ and all $v,w\in T_xM$,
$R^\gamma_x(v,w)\in \mathfrak{H}_x$), and satisfies
$\nabla^\gamma(\Sec(\mathfrak{H}))\subset\Sec(\mathfrak{H})$.  (That
there is a least Lie algebra sub-bundle with these properties is a
consequence of the following result, whose proof is to be found in
Appendix~2, Corollary~\ref{capLie}:\ let $E\to M$ be a Lie algebra
bundle equipped with a Lie connection, and let $E^1$ and $E^2$ be Lie
algebra sub-bundles of $E$ such that $\nabla(\Sec(E^i))\subset
\Sec(E^i)$, $i=1,2$; then $E^1\cap E^2$ is a Lie algebra bundle.)  The
holonomy algebra at $x$ is $\mathfrak{H}_x$.

We now prove a version of the Ambrose-Singer Theorem in the present
context.  For this purpose we define, for every $x\in M$, a vector
subspace $\mathfrak{R}_x$ of the vector space $\K_x$, as follows.
Since for any $y\in M$ and any curve $c$ in $M$ joining $y$ to $x$
parallel translation $\tau^\gamma_c$ along $c$ maps $\K_y$ to $\K_x$,
$\tau^\gamma_cR^\gamma_{y}(v,w)\in\K_x$ for any $v,w\in T_yM$.  We define
$\mathfrak{R}_x$ to be the least subspace of $\K_x$ containing all
parallel translates of curvature vector fields to $x$, that is, all
$\tau^\gamma_cR^\gamma_{y}(v,w)$.

\begin{thm}\label{AmSin}
$\mathfrak{H}_x=\mathfrak{R}_x$.
\end{thm}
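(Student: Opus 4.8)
The plan is to establish the two inclusions $\mathfrak{R}_x\subseteq\mathfrak{H}_x$ and $\mathfrak{H}_x\subseteq\mathfrak{R}_x$ separately. The first is a direct consequence of the defining properties of $\mathfrak{H}$; the second I would obtain by showing that the assignment $x\mapsto\mathfrak{R}_x$ is itself a Lie algebra sub-bundle of $\K$ which is $\nabla^\gamma$-invariant and contains every curvature vector field, so that the minimality in Theorem~\ref{least} forces $\mathfrak{H}\subseteq\mathfrak{R}$.

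For $\mathfrak{R}_x\subseteq\mathfrak{H}_x$: because $\mathfrak{H}$ is invariant under $\nabla^\gamma$, it is preserved by parallel translation, so for any curve $c$ from $y$ to $x$ the map $\tau^\gamma_c$ carries $\mathfrak{H}_y$ into $\mathfrak{H}_x$ (the equivalence of $\nabla^\gamma$-invariance of a sub-bundle and its invariance under parallel transport is one of the results on connections in Appendix~2). Since $\mathfrak{H}$ contains all curvature vector fields, $R^\gamma_y(v,w)\in\mathfrak{H}_y$, hence $\tau^\gamma_cR^\gamma_y(v,w)\in\mathfrak{H}_x$. Every generator of $\mathfrak{R}_x$ therefore lies in $\mathfrak{H}_x$, and so does their span.

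For the reverse inclusion I would first record that $\mathfrak{R}$ is invariant under parallel transport: translating a generator $\tau^\gamma_cR^\gamma_y(v,w)$ of $\mathfrak{R}_x$ along a further curve $c'$ from $x$ to $x'$ gives $\tau^\gamma_{c'\cdot c}R^\gamma_y(v,w)$, again a generator, now of $\mathfrak{R}_{x'}$; applying this in both directions yields $\tau^\gamma_{c'}(\mathfrak{R}_x)=\mathfrak{R}_{x'}$. This already makes the dimension of $\mathfrak{R}_x$ constant along curves, hence constant on the connected manifold $M$. The step I expect to be the main obstacle is upgrading this to genuine smoothness, that is, showing $\mathfrak{R}$ is a smooth sub-bundle rather than just a pointwise family of subspaces. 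To do this I would fix $x_0$, pick generators forming a basis $e_1,\dots,e_k$ of $\mathfrak{R}_{x_0}$, and for $x$ in a coordinate ball about $x_0$ transport them along the radial curves $d_x$ to obtain sections $E_i(x)=\tau^\gamma_{d_x}e_i$. These are smooth because parallel transport depends smoothly on the endpoint of a smoothly varying family of curves, they remain linearly independent near $x_0$ by continuity, and they span $\mathfrak{R}_x=\tau^\gamma_{d_x}(\mathfrak{R}_{x_0})$ by the invariance just noted; hence they form a local frame and $\mathfrak{R}$ is a smooth sub-bundle. Once this is in hand, the same parallel-transport invariance gives $\nabla^\gamma(\Sec(\mathfrak{R}))\subseteq\Sec(\mathfrak{R})$ by the standard argument of differentiating a section transported back into the fixed fibre $\mathfrak{R}_{x_0}$.

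It then remains to prove that $\mathfrak{R}$ is closed under bracket, and here the engine is the curvature identity for $\nabla^\gamma$ computed above. Rearranged it reads
\[
[R^\gamma(X,Y),V]=\nabla^\gamma_Y\nabla^\gamma_XV-\nabla^\gamma_X\nabla^\gamma_YV+\nabla^\gamma_{[X,Y]}V,
\]
and, evaluated at $y$ with $V\in\Sec(\mathfrak{R})$, $V(y)=\zeta$ and $X_y,Y_y$ representing $v,w$, it shows $[R^\gamma_y(v,w),\zeta]\in\mathfrak{R}_y$ for every $\zeta\in\mathfrak{R}_y$, since the right-hand side lies in $\mathfrak{R}_y$ by the $\nabla^\gamma$-invariance just established and the bracket on $\K$ is fibrewise. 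For a general generator $\xi=\tau^\gamma_cR^\gamma_y(v,w)$ of $\mathfrak{R}_x$ and any $\eta\in\mathfrak{R}_x$, writing $\eta=\tau^\gamma_c\eta_0$ with $\eta_0=\tau^\gamma_{c^{-1}}\eta\in\mathfrak{R}_y$ and using that $\tau^\gamma_c$ is an isomorphism of Lie algebras $\K_y\to\K_x$ (Proposition~\ref{Lie}) gives
\[
[\xi,\eta]=\tau^\gamma_c[R^\gamma_y(v,w),\eta_0]\in\tau^\gamma_c(\mathfrak{R}_y)=\mathfrak{R}_x.
\]
As such $\xi$ span $\mathfrak{R}_x$ and the bracket is bilinear, $[\mathfrak{R}_x,\mathfrak{R}_x]\subseteq\mathfrak{R}_x$. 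Thus $\mathfrak{R}$ is a Lie algebra sub-bundle, is $\nabla^\gamma$-invariant, and contains every curvature vector field (take the constant curve at $x$), so minimality gives $\mathfrak{H}_x\subseteq\mathfrak{R}_x$, completing the proof.
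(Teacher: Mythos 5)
Your proposal is correct and takes essentially the same route as the paper: both inclusions are established by showing that $\mathfrak{R}=\bigcup_x\mathfrak{R}_x$ is a $\nabla^\gamma$-invariant Lie algebra sub-bundle of $\K$ containing all curvature vector fields --- using parallel-transport invariance plus the ray construction for smoothness, the curvature identity, and the fact that parallel translation for a Lie connection preserves brackets --- and then invoking the minimality of $\mathfrak{H}$ from Theorem~\ref{least}. The only differences are presentational: you re-derive inline what the paper delegates to Appendix~2 (Propositions~\ref{parallel} and~\ref{vbundle}), and you prove bracket-closure by pairing a curvature field at $y$ with an arbitrary element of $\mathfrak{R}_y$ and then translating, whereas the paper pairs translated generators with curvature fields at $x$; the substance is identical.
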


In the proof we use the following result.  Let $\pi:E\to M$ be a
vector bundle equipped with a linear connection $\nabla$.  Let $E'$ be
a subset of $E$ such that:\ $\pi$ maps $E'$ onto $M$; for each $x\in M$,
$E'_x=\pi|_{E'}^{-1}(x)$ is a linear subspace of $E_x$; and for any
pair of points $x,y\in M$ and any curve $c$ in $M$ joining them,
$\tau_c(E'_x)\subseteq E'_y$.  Then $E'$ is a vector sub-bundle of $E$
and $\nabla(\Sec(E'))\subset\Sec(E')$.  The proof is to be found
in Appendix~2, Proposition~\ref{vbundle}.

\begin{proof}
Set $\mathfrak{R}=\bigcup_x\mathfrak{R}_x$.  It follows from
Proposition~\ref{vbundle} (with $E=\K$ and $E'=\mathfrak{R}$) that
$\mathfrak{R}$ is a vector sub-bundle of $\K$, and 
$\nabla^\gamma(\Sec(\mathfrak{R}))\subset \Sec(\mathfrak{R})$. 
Thus for any $V\in\Sec(\mathfrak{R})$ and any $X,Y\in\X(M)$,
\[
\nabla^\gamma_{X}\nabla^\gamma_{Y}V-\nabla^\gamma_{Y}\nabla^\gamma_{X}V
-\nabla^\gamma_{[X,Y]}V
=[V,R^\gamma(X,Y)]\in\Sec(\mathfrak{R}).
\]
Thus
$[\tau^\gamma_cR^\gamma_y(v_y,w_y),R^\gamma_x(v_x,w_x)]\in\mathfrak{R}_x$
for any $v_x,w_x\in T_xM$ and $v_y,w_y\in T_yM$.  But since
$\nabla^\gamma$ is a Lie connection, parallel translation preserves
brackets (Proposition~\ref{Lie}), so for any $y,z\in M$, any curve $c$
in $M$ joining $y$ to $x$ and any curve $d$ in $M$ joining $z$ to $x$,
$[\tau^\gamma_cR^\gamma_y(v_y,w_y),\tau^\gamma_dR^\gamma_{z}(v_z,w_z)]
\in\mathfrak{R}_x$.  Thus $\mathfrak{R}_x$ is a Lie subalgebra of
$\K_x$.  Moreover, the Lie algebras $\mathfrak{R}_x$ at different
points are isomorphic, so $\mathfrak{R}$ is a Lie algebra sub-bundle
of $\K$, and as we have pointed out already,
$\nabla^\gamma(\Sec(\mathfrak{R}))\subset\Sec(\mathfrak{R})$.
Evidently $R^\gamma_x(v,w)\in\mathfrak{R}_x$ for every $v,w\in T_xM$.
It follows that $\mathfrak{H}\subseteq\mathfrak{R}$.  On the other
hand, any Lie algebra sub-bundle $\K' $ of $\K$ which contains all
curvature vector fields and satisfies
$\nabla^\gamma(\Sec(\K'))\subset\Sec(\K')$ must contain all the
parallel translates of curvature vector fields, and so must contain
$\mathfrak{R}$.  Thus $\mathfrak{R}\subseteq\mathfrak{H}$, and so
$\mathfrak{R}=\mathfrak{H}$.  Thus $\mathfrak{R}_x=\mathfrak{H}_x$.
\end{proof}
 
\section{Holonomy of Landsberg spaces}

We now discuss the application of the results of the previous section
to Landsberg spaces in Finsler geometry.  (Some earlier and much more
rudimentary steps in this direction are to be found in \cite{MC}.)

It is well known \cite{Bao,BCS,Shen} that a Finsler space over a
manifold $M$ supports a canonical nonlinear connection or horizontal
distribution, that is, a distribution on the slit tangent bundle
$\pi:\TMO\to M$ which is everywhere transverse to the fibres.  (The
term `connection' has two distinct uses in this section --- a second
one will be introduced below --- not to mention the uses in the
context of the groupoid theory.  We therefore prefer the term
horizontal distribution since it reduces the possibilities of
confusion.)  We shall not need the explicit definition of the
horizontal distribution, only the fact that it exists, that it is
positively-homogeneous, and that it enjoys one or two other properties
to be introduced in due course.

Suppose given a Finsler space, together with its canonical horizontal
distribution.  A smooth curve in $\TMO$ is horizontal if its tangent
vector is everywhere horizontal; the definition extends to
piecewise-smooth curves in the obvious way.  Let $c$ be a
piecewise-smooth curve in $M$, $x$ a point in the image of $c$ and
$u\in\TxMO$.  A piecewise-smooth curve in $\TMO$ which is horizontal,
projects onto $c$ and passes through $(x,u)$ is called a horizontal
lift of $c$ through $u$.  If $c$ is defined on the finite closed
interval $[a,b]$ and $u\in\TxMO$, $x=c(a)$, then there is a unique
horizontal lift of $c$ through $u$ which is also defined on $[a,b]$.
If one makes a reparametrization of $c$, carrying out the same
reparametrization on its horizontal lift produces a horizontal lift of
the reparametrization of $c$.  As a consequence of this
reparametrization property we may standardize the domains of
definition of curves to $[0,1]$.  We may therefore make the definition
of the horizontal lift more specific, as follows.  Let $c:[0,1]\to M$
be any piecewise-smooth curve, with $c(0)=x$.  For any $u\in\TxMO$ the
horizontal lift of $c$ to $u$ is the unique piecewise-smooth
horizontal curve $\hlift{c}_u:[0,1]\to\TMO$ such that
$\pi\circ\hlift{c}_u=c$ and $\hlift{c}_u(0)=u$.

For any $x,y\in M$ and any piecewise-smooth curve $c$ with $c(0)=x$,
$c(1)=y$, we may define a map $\rho_c:\TxMO\to\TyMO$ by setting
$\rho_c(u)=\hlift{c}_u(1)$, which is in fact a diffeomorphism.  (The
map $\rho_c$ is sometimes called nonlinear parallel transport
\cite{Bao}, but again we prefer to avoid this term for the sake of
clarity.) Slightly more generally, for $t\in [0,1]$ we define
$\rho_c(t):\TxMO\to T_{c(t)}^\circ M$ by
$\rho_c(t)(u)=\hlift{c}_u(t)$.

The canonical horizontal distribution of a Finsler space has the
property that the Finsler function $F$ is constant along horizontal
curves.  Because of this, and because the horizontal distribution is
positively-homogeneous, we may restrict our attention to the
indicatrix bundle $\indic=\{(x,u)\in\TMO:F(x,u)=1\}$; this is a fibre
bundle over $M$ with compact fibres (each indicatrix is diffeomorphic
to a sphere).  In particular, each $\rho_c$ restricts to a
diffeomorphism of indicatrices, $\rho_c:\indic_x\to \indic_y$.

For any vector field $X$ on $M$ we denote by $\hlift{X}$ its
horizontal lift to $\TMO$ with respect to the canonical horizontal
distribution; it is the unique horizontal vector field such that
$\pi_*\hlift{X}=X$.  Another version of the constancy property is that
for every $X\in\X(M)$, $\hlift{X}(F)=0$; in particular, $\hlift{X}$ is
tangent to $\indic$.

In any Finsler space the fundamental tensor $g$ defines a fibre metric
on $\TMO$, and by restriction one on $\indic$.  A Landsberg space,
according to one possible definition (see for example \cite{Bao}), is
a Finsler space for which, for every pair of points $x,y\in M$ and
every curve $c$ joining $x$ and $y$, $\rho_c$ is an isometry of the
Riemannian spaces $(\indic_x,g_x)$ and $(\indic_y,g_y)$.  In the light
of the previous section we may put things another way:\ a Finsler
space is a Landsberg space if the holonomy groupoid of its canonical
horizontal lift, acting on its indicatrix bundle, is a subgroupoid of
the fibre-isometry groupoid.  Then the connection $\Gamma$
associated with the canonical horizontal lift is just
$c^\Gamma(t)=\rho_c(t)$.

There is an equivalent differential version of this definition.  A
Landsberg space is one for which $\Lie_{\hlift{X}}g=0$, or in other
words $\hlift{X}\in\J$, for all $X\in\X(M)$.  It is certainly the
case, therefore, that for a Landsberg space $\J$ is transitive, and we
conclude from Theorem~\ref{isogpoid} that the fibre-isometry groupoid
is a Lie groupoid.  We denote by $\I$ its Lie algebroid, as before.
It is moreover the case that the horizontal lift defines an
infinitesimal connection on $\I$:\ for $v\in T_xM$ we define
$\gamma(v)=\hlift{v}\in\I_x$ (where $\hlift{v}$ is considered as a
vector field along $\indic_x\to \indic$ satisfying the
isometry equation at $x$).

The theory of the previous section applies to Landsberg spaces, 
therefore. There is one point of interest which deserves closer 
inspection, and that concerns the identification of the covariant 
derivative operator $\nabla^\gamma$. To discuss this point it will be 
worthwhile to revert temporarily to the consideration of the 
canonical horizontal distribution as a distribution on $\TMO$.  

We may associate with every vector field $X$ on $M$ an operator
$\nabla_X$ on vertical vector fields $V$ on $\TMO$ by
\[
\nabla_XV=[\hlft{X},V].
\]
Then $\nabla_XV$ is vertical, is $\R$-linear in both arguments, and 
for $f\in C^\infty(M)$ 
\[
\nabla_{fX}V=f\nabla_XV,\qquad
\nabla_X(fV)=f\nabla_XV+X(f)V;
\]
so $\nabla$ has the properties of a (linear) covariant differentiation
operator.  Furthermore, $\nabla_X$ is a derivation of the bracket:
\[
\nabla_X[V,W]=[\nabla_XV,W]+[V,\nabla_XW]
\]
by the Jacobi identity.  Of course, to call this a covariant
derivative would be stretching a point, since to do so we should have
to regard the space of vertical vector fields on $\TxMO$ as the fibre
of a vector bundle over $M$, and a vertical vector field on $\TMO$ as
a section of this bundle.  

If the canonical horizontal distribution is spanned locally by vector
fields
\[
H_i=\fpd{}{x^i}-\Gamma^j_i\vf{u^j}
\]
then 
\[
\nabla_{\partial/\partial x^i}\left(\vf{u^j}\right)=
\left[H_i,\vf{u^j}\right]=\fpd{\Gamma^k_i}{u^j}\vf{u^k}.
\]
Now the connection coefficients $\conn{k}{i}{j}$ of the Berwald
connection are given by
\[
\fpd{\Gamma^k_i}{u^j}=\conn{k}{i}{j}.
\]  
That is to say, $\nabla$ is closely related to the Berwald connection.
The Berwald connection is usually thought of as a connection on the
vector bundle $\pi^*TM$, the pullback of $TM$ over $\TMO$; it is of
course a linear connection.  But for us it will be more convenient to
work with vertical vector fields on $\TMO$ than with sections of
$\pi^*TM$, though there is no essential difference (every section of
$\pi^*TM$ defines a vertical vector field on $\TMO$ by the vertical
lift procedure, and the correspondence is 1-1).  In fact for a curve
$c$ on $M$, $\nabla_{\dot{c}}$ is effectively covariant
differentiation with respect to the Berwald connection along
horizontal lifts of $c$.

There is a further point of interest, concerned with the differential
of the map $\rho_c$ for a curve $c$ joining $x$ and $y$ in $M$.
What we have to say initially applies to any Finsler space, and it
will continue to be convenient initially to consider the action of
$\rho_c$ on $\TMO$.  Since $\TxMO$ is a vector space (though deprived
of its origin), its tangent space at any point may be canonically
identified with itself (with origin restored); so for any $u\in\TxMO$,
$\rho_{c*u}$ may be regarded as a (linear) map $T_xM\to T_yM$.  This
map may be described as follows.  For $v\in T_xM$, $u\in\TxMO$, let
$v(t)$ be the (unique) vector field along $\hlift{c}_u$ which is
parallel with respect to the Berwald connection of the Finsler space
and satisfies $v(0)=v$:\ then $\rho_{c*u}(v)=v(1)$.  This may be seen
as follows.  Consider the pullback $c^*\TMO$.  The canonical
horizontal distribution on $\TMO$ induces one on $c^*\TMO$, which is
1-dimensional and is spanned by the vector field
\[
\hlift{\left(\vf{t}\right)}=\vf{t}-\dot{c}^j\Gamma^i_j(c^k,u^k)\vf{u^i}.
\]
In view of the definition of $\rho_c$ it is clear that $\rho_{c*u}(v)$
is the Lie translate by $\hlift{(\partial/\partial t)}$ of $v$,
considered as a vertical vector at $u$, to $\rho_c(u)$.  For any
vertical vector field
\[
V=V^i(t,u^k)\vf{u^i}
\]
on $c^*\TMO$,
\begin{align*}
\left[\hlift{\left(\vf{t}\right)},V^i\vf{u^i}\right]&=
\left(\hlift{\left(\vf{t}\right)}(V^i)+\conn{i}{j}{k}\dot{c}^jV^k\right)\vf{u^i}\\
&=\left(\hlift{\dot{c}}_u(V^i)+\conn{i}{j}{k}\dot{c}^jV^k\right)\vf{u^i}.
\end{align*}
Thus $V$ is Lie transported by $\hlift{(\partial/\partial t)}$ if and
only if 
\[
\hlift{\dot{c}}_u(V^i)+\conn{i}{j}{k}\dot{c}^jV^k=0,
\]
that is, if and only if it is parallelly translated along
$\hlift{c}_u$ with respect to the Berwald connection.  We can
conveniently summarise this in terms of $\nabla$, as follows.  For any
curve $c$ on $M$ with $c(0)=x$, $c(1)=y$, the map $\rho_{c*}$,
considered as a map from vertical vector fields on $\TxMO$ to vertical
vector fields on $\TyMO$, is given by parallel translation with
respect to $\nabla$, in the sense that if $V(t)$ is a field of
vertical vector fields along $c(t)$ with $\nabla_{\dot{c}}V=0$, then
$\rho_{c*}V(0)=V(1)$.  

We can formally compute the curvature of $\nabla$; we obtain
\[
\nabla_X\nabla_YV-\nabla_Y\nabla_XV-\nabla_{[X,Y]}V
=[V,R(X,Y)]
\]
where $R$ is the curvature of the canonical horizontal distribution;
note that $R(X,Y)$ is a vertical vector field. 

One version of the Landsberg property is that $\Lie_{\hlift{X}}g=0$ 
for all $X\in\X(M)$. But
\begin{align*}
\Lie_{\hlift{X}}g(V,W)&=
\hlift{X}\big(g(V,W)\big)-g\big([\hlift{X},V],W\big)-g\big(V,[\hlift{X},W]\big)\\
&=\hlift{X}\big(g(V,W)\big)-g\big(\nabla_XV,W\big)-g\big(V,\nabla_XW\big); 
\end{align*}
so if we extend the action of $\nabla$ to fibre metrics in the 
obvious way we may write the condition as $\nabla g=0$. 

We return to consideration of $\indic$, the Lie algebroid $\I$ and its
kernel $\K$, in a Landsberg space.  We now consider $\hlift{X}$, for
any $X\in\X(M)$, as a vector field on $\indic$; then in a Landsberg
space we have $\hlift{X}\in\Sec(\I)$.  The infinitesimal connection is
just $\gamma(X)=\hlift{X}$, and $\nabla^\gamma$ is the restriction of
the operator $\nabla$ to $\Sec(\K)$.  Since $\K$ is a vector bundle
the restriction of $\nabla$ is a genuine covariant derivative. From 
the theory of the previous section we deduce the following proposition 
(which may also be proved directly from the definitions of $\K$ and 
$\nabla$).

\begin{prop}
In a Landsberg space 
\begin{enumerate}
\item $\nabla(\Sec(\K))\subset\Sec(\K)$;
\item for all $X,Y\in\X(M)$, $R(X,Y)\in\Sec(\K)$.
\end{enumerate}
\end{prop}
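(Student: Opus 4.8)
The plan is to establish both statements directly from the definitions, using the derivation-type identity for the operator $\lie_Z$ together with the defining Landsberg condition $\lie_{\hlift{X}}g=0$. Both claims are of course also immediate consequences of the general theory: since the infinitesimal connection is $\gamma(X)=\hlift{X}$, the operator $\nabla_X V=[\hlift{X},V]$ coincides with $\nabla^\gamma_X V$, and the curvature $R(X,Y)=\hlift{[X,Y]}-[\hlift{X},\hlift{Y}]$ coincides with $R^\gamma(X,Y)$; so part~1 follows from Proposition~\ref{connK} and part~2 from Proposition~\ref{curvK}. But it is instructive to see how the Landsberg hypothesis enters directly, and that is the route I would spell out.

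For part~1, I would first note that for $V\in\Sec(\K)$ the bracket $[\hlift{X},V]$ is automatically vertical, since $\hlift{X}$ is projectable and $V$ is vertical (hence projectable to $0$); so it remains only to check that $\lie_{[\hlift{X},V]}g=0$. The key step is to apply the identity $\lie_{Z_1}(\lie_{Z_2}g)-\lie_{Z_2}(\lie_{Z_1}g)=\lie_{[Z_1,Z_2]}g$ with $Z_1=\hlift{X}$ and $Z_2=V$. The first term vanishes because $\lie_V g=0$, which is exactly the statement that $V\in\Sec(\K)$; the second vanishes because $\lie_{\hlift{X}}g=0$, which is the Landsberg condition. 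Hence $\lie_{[\hlift{X},V]}g=0$, and as $[\hlift{X},V]$ is vertical it lies in $\Sec(\K)$.

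For part~2, I would proceed analogously. Since $R(X,Y)=\hlift{[X,Y]}-[\hlift{X},\hlift{Y}]$ is a vertical vector field, it suffices to show $\lie_{R(X,Y)}g=0$. Applying $\lie$ to each summand, the term $\lie_{\hlift{[X,Y]}}g$ vanishes directly by the Landsberg condition applied to the vector field $[X,Y]$. For the remaining term $\lie_{[\hlift{X},\hlift{Y}]}g$ I would again invoke the derivation identity, now with $Z_1=\hlift{X}$, $Z_2=\hlift{Y}$; both $\lie_{\hlift{X}}g$ and $\lie_{\hlift{Y}}g$ vanish by the Landsberg condition, so $\lie_{[\hlift{X},\hlift{Y}]}g=0$ as well. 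Subtracting gives $\lie_{R(X,Y)}g=0$, and verticality then places $R(X,Y)$ in $\Sec(\K)$.

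The calculations are routine once the derivation identity is in hand; there is no real computational obstacle. The only genuine content is the recognition that the Landsberg condition $\lie_{\hlift{X}}g=0$ is precisely what annihilates the bracket and curvature terms, and that one must separately confirm verticality in order to conclude membership in $\K$ rather than merely in $\I$. So the main thing to be careful about is bookkeeping: keeping clear at each stage which hypothesis (membership in $\K$ versus the Landsberg property) is doing the work.
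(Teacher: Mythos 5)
Your proposal is correct, and your opening paragraph is in fact the whole of the paper's own proof: the paper gives no separate argument, merely observing that in a Landsberg space $\gamma(X)=\hlift{X}$ defines an infinitesimal connection on $\I$, that $\nabla^\gamma$ is the restriction of $\nabla$ to $\Sec(\K)$ and $R=R^\gamma$, so that the proposition is a specialization of Propositions~\ref{connK} and~\ref{curvK}; the direct verification you then develop is exactly what the paper waves at in its parenthetical remark that the result ``may also be proved directly from the definitions of $\K$ and $\nabla$'' without ever writing it down. The comparison is worth recording. In the general-theory route the Landsberg condition is used once, to guarantee $\hlift{X}\in\Sec(\I)$; after that, everything follows from the closure of $\Sec(\I)$ under bracket (part of the Lie algebroid structure established in Theorem~\ref{isogpoid}) together with verticality. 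Your route unwinds that closure property into the explicit commutator identity $\lie_{Z_1}(\lie_{Z_2}g)-\lie_{Z_2}(\lie_{Z_1}g)=\lie_{[Z_1,Z_2]}g$ for projectable $Z_1,Z_2$ (which your choices do satisfy, a vertical field being projectable to zero), so that one sees precisely which hypothesis kills which term: the Killing condition $\lie_Vg=0$ in part~1, the Landsberg condition $\lie_{\hlift{X}}g=0$ everywhere else, with the $C^\infty(M)$-linearity of $Z\mapsto\lie_Zg$ justifying the term-by-term treatment of $R(X,Y)=\hlift{[X,Y]}-[\hlift{X},\hlift{Y}]$ in part~2. What the paper's route buys is brevity and the placement of the result inside the groupoid framework; what yours buys is transparency about where the Landsberg hypothesis enters, and it is the natural proof if one wants the proposition independently of the machinery of Theorem~\ref{isogpoid}. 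Both arguments are sound, and your insistence on checking verticality separately, so as to land in $\Sec(\K)$ rather than merely $\Sec(\I)$, is exactly the right bookkeeping.
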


We can now apply the results of the previous section to the case of a
Landsberg space.  We have a locally trivial Lie groupoid, the fibre-isometry
groupoid of $\indic$, whose Lie algebroid $\I$ is equipped with an
infinitesimal connection $\gamma:TM\to \I$, where for $v\in T_xM$,
$\gamma(v)=\hlift{v}$.  The corresponding connection $\Gamma$ is the
map $c\mapsto\rho_c$ (where $\rho_c$ is regarded as a curve in the
fibre-isometry groupoid).  The covariant derivative operator
$\nabla^\gamma$ corresponding to $\gamma$, which acts on sections of
the kernel, is just $\nabla$ operating on $\Sec(\K)$.  The holonomy
groupoid is a Lie groupoid, and the holonomy Lie algebra bundle is the
least Lie algebra sub-bundle $\mathfrak{H}$ of $\K$ which contains all
curvature vector fields $R_x(v,w)$ for $x\in M$ and $v,w\in T_xM$ and
satisfies $\nabla(\Sec(\mathfrak{H}))\subset\Sec(\mathfrak{H})$.  We
may further specialise the Ambrose-Singer Theorem,
Theorem~\ref{AmSin}, from the previous section.  For this purpose we
recall that the parallel translation operator corresponding to
$\nabla$ along a curve $c$ from $y$ to $x$ is the restriction to
$\K_y$ of $\rho_{c*}$, the differential of $\rho_c$.  This is an
isomorphism of Lie algebras $\K_y\to\K_x$, as follows from the general
theory, and also directly from the fact that $\rho_c$ is an isometry.
We now define $\mathfrak{R}_x$ to be the least subspace of $\K_x$
containing all elements of the form $\rho_{c*}R_{y}(v,w)$.

\begin{thm}
In a Landsberg space $\mathfrak{H}_x=\mathfrak{R}_x$.
\end{thm}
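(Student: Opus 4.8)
The plan is to recognise this statement as the specialisation of the general Ambrose-Singer result, Theorem~\ref{AmSin}, to the Landsberg setting, and to verify that every object appearing in the general theorem is correctly matched by its Landsberg counterpart. Once this dictionary is in place the conclusion transfers with no further work.

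First I would confirm that the hypotheses of Theorem~\ref{AmSin} hold. The indicatrix bundle $\indic \to M$ has compact fibres, its base $M$ is connected, and in a Landsberg space $\hlift{X}\in\J$ for every $X\in\X(M)$, so $\J$ is transitive; moreover $\gamma(v)=\hlift{v}$ defines an infinitesimal connection on the Lie algebroid $\I$. These points were all established in the discussion preceding the statement, so the general theory of Section~3 applies verbatim.

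Next I would match the two key ingredients. For the curvature, the algebroid curvature of $\gamma$ is
\[
R^\gamma(X,Y)=\gamma([X,Y])-[\gamma(X),\gamma(Y)]=\hlift{[X,Y]}-[\hlift{X},\hlift{Y}],
\]
which is exactly the curvature $R(X,Y)$ of the canonical horizontal distribution; hence $R^\gamma_y(v,w)=R_y(v,w)$ for all $v,w\in T_yM$. For parallel translation, the operator $\tau^\gamma_c$ associated with $\nabla^\gamma$ is, as recalled immediately before the statement, the restriction to $\K_y$ of $\rho_{c*}$; this follows because $\rho_{c*}$ is parallel translation with respect to $\nabla$ and $\nabla^\gamma$ is merely the restriction of $\nabla$ to $\Sec(\K)$.

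Finally I would draw the conclusion. Under these identifications the subspace $\mathfrak{R}_x$ defined here, namely the least subspace of $\K_x$ containing all $\rho_{c*}R_y(v,w)$, coincides with the subspace $\mathfrak{R}_x$ of Theorem~\ref{AmSin}, the least subspace containing all $\tau^\gamma_c R^\gamma_y(v,w)$. Likewise the holonomy Lie algebra bundle $\mathfrak{H}$ determined by the connection $\Gamma:c\mapsto\rho_c$ is the same object in both settings. Theorem~\ref{AmSin} then gives $\mathfrak{H}_x=\mathfrak{R}_x$ at once. There is no genuine obstacle here: the only care required is in checking the two identifications of the preceding paragraph, and both were already prepared in the surrounding text, so the proof is essentially a matter of bookkeeping.
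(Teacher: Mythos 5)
Your proposal is correct and follows exactly the route the paper takes: the paper offers no separate proof of this theorem, but the paragraphs immediately preceding it set up precisely your dictionary (the fibre-isometry groupoid is a Lie groupoid by Theorem~\ref{isogpoid} since the Landsberg condition makes $\J$ transitive, $\gamma(v)=\hlift{v}$, $\nabla^\gamma=\nabla|_{\Sec(\K)}$, $\tau^\gamma_c=\rho_{c*}|_{\K_y}$, and $R^\gamma=R$), after which the statement is the verbatim specialisation of Theorem~\ref{AmSin}. Your verification of the two identifications (curvature and parallel translation) is exactly the bookkeeping the paper relies on.
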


\section{Covariant derivatives of the curvature}

This section is devoted to clarifying the relationship between the
holonomy algebra $\mathfrak{H}_x$ of a Landsberg space and the algebra
generated by the curvature and its covariant derivatives.  It is based
on Section~10 of Chapter~II of Kobayashi and Nomizu~\cite{KandN}.

We define, for each $k=2,3,\ldots$\,, an $\R$-linear space of vertical
vector fields $\C^k$ on $\indic$ inductively as follows:\ $\C^2$ is
the set of all finite linear combinations over $\R$ of vertical vector
fields of the form $R(X,Y)$ for any pair of vector fields $X$, $Y$ on
$M$; $\C^k$ is the set of all finite linear combinations over $\R$ of
vertical vector fields which either belong to $\C^{k-1}$ or are of the
form $\nabla_XV$ for some vector field $X$ on $M$, where
$V\in\C^{k-1}$.

\begin{thm} With $\C^k$ defined as above
\begin{enumerate}
\item each $\C^k$ is a $C^\infty(M)$-module; 
\item $[\C^k,\C^l]\subset\C^{k+l}$;
\item for $m=0,1,2,\ldots$\,, $\C^{m+2}/\C^{m+1}$ is spanned by equivalence 
classes of vertical vector fields of the form
\[
\nabla_{X_1}\nabla_{X_2}\cdots\nabla_{X_m}\left(R(Y,Z)\right),
\]
where $X_1$, $X_2$, \ldots\,, $X_m$, $Y$ and $Z$ are any vector fields
on $M$ (and $\C^1=\{0\}$). 
\end{enumerate}
\end{thm}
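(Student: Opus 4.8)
The plan is to treat the three assertions in turn, each by induction on the degree, relying throughout on three facts already established: that $R$ is $C^\infty(M)$-bilinear, that $\nabla_X$ raises the degree of a $\C^k$-element by one while being $\R$-linear and satisfying $\nabla_{fX}V=f\nabla_XV$, and that $\nabla_X$ is a derivation of the bracket together with the curvature identity $\nabla_X\nabla_YV-\nabla_Y\nabla_XV-\nabla_{[X,Y]}V=[V,R(X,Y)]$.

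For (1) I would induct on $k$. For $k=2$ the $C^\infty(M)$-module property is immediate from the $C^\infty(M)$-bilinearity of $R$, since $fR(X,Y)=R(fX,Y)\in\C^2$. For the step, a generator of $\C^k$ either already lies in $\C^{k-1}$, where the inductive hypothesis applies, or has the form $\nabla_XV$ with $V\in\C^{k-1}$; in the latter case $f\nabla_XV=\nabla_{fX}V$ is again a generator of $\C^k$. Since every operation in sight is $\R$-linear, this gives closure of $\C^k$ under multiplication by $C^\infty(M)$.

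The substance is in (2). I would prove $[\C^k,\C^l]\subset\C^{k+l}$ by induction on $k$, with the statement universally quantified over $l$; recognising that this universal quantification is exactly what is needed for the induction to close is the one delicate point. The base case $k=2$ uses the curvature identity directly: for $W\in\C^l$ we have $[W,R(X,Y)]=\nabla_X\nabla_YW-\nabla_Y\nabla_XW-\nabla_{[X,Y]}W\in\C^{l+2}$, since each covariant derivative raises the degree by one, whence $[\C^2,\C^l]\subset\C^{l+2}$. For the step I would use the derivation property in the form $[\nabla_XV,W]=\nabla_X[V,W]-[V,\nabla_XW]$ with $V\in\C^{k-1}$ and $W\in\C^l$: the first term lies in $\C^{k+l}$ because $[V,W]\in\C^{k-1+l}$ by the hypothesis at $k-1$ and then $\nabla_X$ raises the degree; the second term is $[V,\nabla_XW]\in[\C^{k-1},\C^{l+1}]\subset\C^{k+l}$, again by the hypothesis at $k-1$ but now with second index $l+1$ — which is precisely why the hypothesis must hold for all $l$. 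Generators of $\C^k$ already lying in $\C^{k-1}$ are absorbed by the same hypothesis, and $\R$-bilinearity of the bracket completes the step.

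For (3) I would once more induct on $m$, noting that $\C^2/\C^1=\C^2$ is spanned by the classes of the $m=0$ expressions $R(Y,Z)$. For the step, $\C^{m+2}/\C^{m+1}$ is spanned by classes $[\nabla_XV]$ with $V\in\C^{m+1}$, since the remaining generators of $\C^{m+2}$ already lie in $\C^{m+1}$ and so vanish in the quotient. Writing $V$, by the inductive hypothesis, as an $\R$-linear combination of $(m-1)$-fold derivatives $\nabla_{X_2}\cdots\nabla_{X_m}(R(Y,Z))$ modulo an element $V_0\in\C^m$, I would apply $\nabla_X$: the terms $\nabla_X\nabla_{X_2}\cdots\nabla_{X_m}(R(Y,Z))$ are $m$-fold derivatives of the required form, while $\nabla_XV_0\in\C^{m+1}$ contributes nothing in $\C^{m+2}/\C^{m+1}$. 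Hence the classes of $m$-fold covariant derivatives of the curvature span the quotient, completing the induction.
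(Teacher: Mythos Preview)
Your proof is correct and follows essentially the same approach as the paper: induction on the degree for (1), the curvature identity for the base case of (2) and the derivation identity $[\nabla_XV,W]=\nabla_X[V,W]-[V,\nabla_XW]$ for the inductive step, while (3) the paper simply declares ``obvious''. The only cosmetic difference is that in (2) you induct on the first index $k$ (quantifying over $l$) whereas the paper inducts on the second index $l$ (quantifying over $k$); the two are interchangeable by antisymmetry of the bracket and the identical use of the derivation property.
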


(Note that in the third assertion, every $\nabla$ operates on a
vertical vector field.  There is no question of writing this formula
in terms of covariant differentials of the curvature tensor, as one
would for a linear connection, essentially because (for example)
$\nabla_{X_a}Y$ would make no sense in this context.)

\begin{proof}
(1) We have to show that $\C^k$ is closed under scalar multiplication
by smooth functions on $M$. For any $f\in C^\infty(M)$,
$fR(X,Y)=R(fX,Y)\in\C^2$.  Suppose that $\C^{k-1}$ is closed under
scalar multiplication by smooth functions on $M$.  Then for any
$V\in\C^{k-1}$ we know that $fV\in\C^{k-1}\subset\C^k$, and
$f\nabla_XV=\nabla_{fX}V\in\C^k$; so $\C^k$ is closed under scalar
multiplication by smooth functions on $M$.

(2) We show first that $[\C^k,\C^2]\subset\C^{k+2}$ for all $k\geq 2$.  We
have
\[
[V,R(X,Y)]=
\nabla_X\nabla_YV-\nabla_Y\nabla_XV-\nabla_{[X,Y]}V,
\]
and if $V\in\C^k$, each term on the right-hand side belongs to
$\C^{k+2}$.  Now suppose that $[\C^k,\C^l]\subset\C^{k+l}$ for all $k$
and all $l<L$.  Let $V\in\C^L$.  It will be sufficient to consider
$V=\nabla_XW$ with $W\in\C^{L-1}$.  Then
\[
[U,V]=[U,\nabla_XW]=\nabla_X[U,W]-[\nabla_XU,W].
\]
For $U\in\C^k$, by the induction assumption
$[U,W]\in\C^{k+L-1}$, so $\nabla_X[U,W]\in\C^{k+L}$;
while $\nabla_XU\in\C^{k+1}$, so by the induction assumption
again $[\nabla_XU,W]\in\C^{k+L}$.

(3)\quad Obvious.
\end{proof}

Let us set $\bigcup_k\C^k=\C$.  Then $\C$ is a
$C^\infty(M)$-linear space of vertical vector fields on $\indic$ contained
in $\Sec(\K)$, which is closed under bracket, contains $R(X,Y)$ and is
invariant under $\nabla$; and it is minimal with respect to these
properties.  It is clear, moreover, that 
$\C^k\subset\Sec(\mathfrak{H})$
for all $k=2,3,\ldots$\, so $\C\subset\Sec(\mathfrak{H})$.
However, we have no guarantee that $\C$ consists of the sections
of a Lie algebra bundle, so in particular we cannot identify it with
$\Sec(\mathfrak{H})$.

For $x\in M$ we set $\C_x=\{V_x:V\in\C\}$.  Then $\C_x$ consists of
vertical vector fields on $\indic_x$, and is a Lie algebra with respect to
the bracket of vertical vector fields.  Clearly, $\C_x$ is a Lie
subalgebra of the holonomy algebra $\mathfrak{H}_x$ for each $x\in M$.
However, we have no reason to suppose that $\C_x$ and $\C_y$ are
isomorphic Lie algebras for $x\neq y$, or even that they have the same
dimension.

\begin{prop}\label{dim}
For each $x\in M$ there is a neighbourhood $U$ of $x$ in $M$ such 
that for all $y\in U$, $\dim\C_y\geq\dim\C_x$.
\end{prop}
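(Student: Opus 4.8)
The plan is to exhibit, near any fixed $x$, a collection of sections of $\K$ drawn from $\C$ whose values at $x$ form a basis of $\C_x$, and to argue that these same sections remain linearly independent at all nearby points $y$, forcing $\dim\C_y\ge\dim\C_x$. The key observation making this tractable is that $\C$ is built up from the curvature and its iterated covariant derivatives, and every generator of $\C$ — namely each $R(Y,Z)$ and each $\nabla_{X_1}\cdots\nabla_{X_m}(R(Y,Z))$ — is globally defined as a genuine element of $\Sec(\K)$. Thus $\C$ is not merely a pointwise collection but really is spanned, as an $\R$-linear (indeed $C^\infty(M)$-linear, by the previous theorem) space, by honest sections.

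First I would fix $x$ and choose $V_1,\dots,V_r\in\C$ whose values $(V_1)_x,\dots,(V_r)_x$ form a basis of the finite-dimensional vector space $\C_x\subset\K_x$, where $r=\dim\C_x$; this is possible precisely because every element of $\C_x$ is $W_x$ for some $W\in\C$. Next I would invoke the fact that $\K\to M$ is a vector bundle (as established for the Landsberg setting in Section~4) to choose a local frame and express each $V_i$ in components; linear independence of the values $(V_i)_x$ means that a certain $r\times r$ minor of the component matrix is nonzero at $x$. Since the $V_i$ are smooth sections, this minor is a continuous (indeed smooth) function on $M$, so it remains nonzero throughout some neighbourhood $U$ of $x$. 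Consequently, for every $y\in U$ the values $(V_1)_y,\dots,(V_r)_y$ are linearly independent in $\K_y$.

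Finally, since each $V_i\in\C$, we have $(V_i)_y\in\C_y$ for every $y$, so $\C_y$ contains $r$ linearly independent vectors, whence $\dim\C_y\ge r=\dim\C_x$ for all $y\in U$, as required. The one point demanding care — and the step I would expect to be the main obstacle — is the very first one: guaranteeing that a basis of the abstract quotient-style space $\C_x$ can be realized by \emph{global} sections in $\C$ rather than by germs or by vector fields defined only near $\indic_x$. This is where the structural result of the preceding theorem is essential, for it tells us $\C_x=\{V_x:V\in\C\}$ with $\C$ spanned by the explicit globally-defined iterated covariant derivatives of $R$; once this is in hand, the semicontinuity argument via the nonvanishing minor is routine and the inequality follows. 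It is worth noting that only a lower bound is obtainable by this method: the dimension can jump \emph{up} as $y\to x$ degenerates because relations among the $(V_i)_y$ that hold at $x$ may fail nearby, which is exactly why the proposition asserts $\dim\C_y\ge\dim\C_x$ rather than equality.
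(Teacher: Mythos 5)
Your proposal is correct and follows essentially the same argument as the paper: pick elements of $\C$ whose values at $x$ form a basis of $\C_x$, then use smoothness of these sections of $\K$ to conclude that their values remain linearly independent in a neighbourhood of $x$. Your explicit justification via a nonvanishing $r\times r$ minor in a local frame simply spells out the openness-of-linear-independence step that the paper leaves implicit.
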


\begin{proof}
Every element of $\C_x$ is the restriction to $\indic_x$ of an
element of $\C$.  Let $V_a$,
$a=1,2,\ldots,\dim\C_x$, be elements of $\C$
such that $\{V_a|_x\}$ is a basis for $\C_x$.  Since the
$V_a|_x$ are linearly independent, there is a neighbourhood $U$ of $x$
such that the $V_a|_y$ are linearly independent for all $y\in U$.
Then $\dim\C_y\geq\dim\C_x$ for all $y\in U$.
\end{proof}

\begin{prop}
Let $\mu$ be the maximum value of $\dim\C_x$ as $x$ ranges over 
$M$. Then the set $\{x\in M:\dim\C_x=\mu\}$ is an open subset of 
$M$. Let $U$ be a path-connected component of it:\ then for each 
$x\in U$, $\C_x$ is the holonomy algebra at $x$ of the 
Landsberg space defined over $U$ by the restriction of the Landsberg 
Finsler function $F$ to $T^\circ U$.
\end{prop}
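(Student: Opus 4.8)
The plan is to show that over a path-connected component $U$ of the maximal-dimension set the fibres $\C_x$ fit together into a Lie algebra sub-bundle of $\K|_U$ which contains all curvature vector fields and is invariant under $\nabla$; minimality of the holonomy algebra bundle (Theorem~\ref{least}) then forces this sub-bundle to coincide with it. For the openness statement I first note that $\K_x$ is the isometry algebra of the compact Riemannian fibre $(\indic_x,g_x)$, hence finite-dimensional, and that $\dim\K_x$ is constant since the fibres are pairwise isometric; as $\C_x\subset\K_x$, the number $\mu=\max_x\dim\C_x$ exists and is attained. If $\dim\C_x=\mu$ then Proposition~\ref{dim} supplies a neighbourhood on which $\dim\C_y\geq\mu$, and maximality forces $\dim\C_y=\mu$ there, so $\{x:\dim\C_x=\mu\}$ is open; since $M$ is locally path-connected, its path-connected components $U$ are open.

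Next, fixing such a $U$, set $\mathcal{B}=\bigcup_{x\in U}\C_x\subset\K|_U$. Around any $x_0\in U$ I would choose $V_1,\dots,V_\mu\in\C$ whose restrictions form a basis of $\C_{x_0}$; by the argument of Proposition~\ref{dim} the $V_a|_y$ remain linearly independent nearby, and since $\dim\C_y=\mu$ they then form a basis of each $\C_y$, so the $V_a$ are a local frame and $\mathcal{B}$ is a vector sub-bundle of $\K|_U$. I would then verify the three structural properties. It contains the curvature since $R(X,Y)\in\C^2\subset\C$. It is $\nabla$-invariant: writing a section as $\sum_a f^aV_a$, the Leibniz rule gives $\nabla_X(\sum_a f^aV_a)=\sum_a\bigl(X(f^a)V_a+f^a\nabla_XV_a\bigr)$, and $\nabla_XV_a\in\C$ because $\C$ is $\nabla$-invariant, so every term is a section of $\mathcal{B}$. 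It is closed under the fibrewise bracket: for functions $f^a,g^b$ on $M$ the verticality of the $V_a$ kills the derivative terms, leaving $[\sum_a f^aV_a,\sum_b g^bV_b]=\sum_{a,b}f^ag^b[V_a,V_b]$ with $[V_a,V_b]\in\C$.

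The remaining and most delicate point is to promote $\mathcal{B}$ from a bracket-closed vector sub-bundle to a genuine Lie algebra sub-bundle, i.e.\ to produce local trivializations that are fibrewise Lie algebra isomorphisms. For this I would show that $\nabla$-parallel transport preserves $\mathcal{B}$: representing a parallel section in the frame $\{V_a\}$ converts the equation $\nabla_{\dot c}P=0$ into a linear ODE for the frame coefficients, using $\nabla_{\dot c}V_a\in\Sec(\mathcal{B})$, so the solution stays in $\mathcal{B}$. Since $\nabla$ is a Lie connection, parallel transport along $c$ is a Lie algebra isomorphism $\K_x\to\K_y$ (Proposition~\ref{Lie}, equal to $\rho_{c*}$), and therefore restricts to a Lie algebra isomorphism $\C_x\to\C_y$; trivializing by radial parallel transport within a chart then exhibits $\mathcal{B}$ as a Lie algebra sub-bundle. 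Finally, letting $\mathfrak{H}$ denote the holonomy algebra bundle of the Landsberg space over $U$, Theorem~\ref{least} makes it the least such sub-bundle, so $\mathfrak{H}\subseteq\mathcal{B}$, that is $\mathfrak{H}_x\subseteq\C_x$; conversely the induction showing each $\C^k\subset\Sec(\mathfrak{H})$ (which uses only that $\mathfrak{H}$ contains the curvature, is bracket-closed and $\nabla$-invariant) gives $\C_x\subseteq\mathfrak{H}_x$, whence $\C_x=\mathfrak{H}_x$ for every $x\in U$. The main obstacle is precisely this Lie-algebra-bundle step, since constant fibre dimension alone does not guarantee that the fibres are isomorphic Lie algebras — it is the Lie-connection property of $\nabla$ that supplies the isomorphisms.
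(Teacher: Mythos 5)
Your proposal is correct and follows essentially the same route as the paper's proof: openness via Proposition~\ref{dim}, a local frame $\{V_a\}\subset\C$ making $\bigcup_{x\in U}\C_x$ a vector bundle whose sections are $\C|_U$, the Lie-connection property of $\nabla$ upgrading it to a Lie algebra bundle, and then minimality to identify it with the holonomy bundle. The only difference is one of exposition: you spell out explicitly (via parallel-transport preservation and Proposition~\ref{Lie}) the step the paper compresses into the single sentence ``$\nabla$ is a Lie connection for the corresponding bracket of sections:\ so it is a Lie algebra bundle,'' which is a worthwhile clarification but not a different argument.
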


\begin{proof}
By Proposition \ref{dim}, every point $x$ with $\dim\C_x=\mu$ has a
neighbourhood at all of whose points $y$, $\dim\C_y=\mu$; so $\{x\in
M:\dim\C_x=\mu\}$ is open.  Let $U$ be a path-connected component
of this set, and restrict both the Landsberg structure and $\C$ to
this open submanifold of $M$.  If $V_a$, $a=1,2,\ldots,\mu$, are
elements of $\C|_U$ such that $\{V_a|_x\}$ is a basis for $\C_x$ for
some $x\in U$, then they also form a basis for $\C_y$ for all $y$ in
some neighbourhood of $x$ in $U$; so $\bigcup_{x\in U}\C_x$ is a
vector bundle over $U$, and $\C|_U$ consists of its sections.  Each of
its fibres is a Lie algebra, and $\nabla$ is a Lie connection for the
corresponding bracket of sections:\ so it is a Lie algebra bundle.  We
denote it by $\overline{\K}$.  Now $\overline{\K}$ evidently contains
all curvature vector fields $R_x(v,w)$ for $x\in U$, $v,w\in T_xU$,
and of course $\nabla(\Sec(\overline{\K}))\subset\Sec(\overline{\K})$.
Moreover, by construction $\overline{\K}$ is contained in any Lie
algebra sub-bundle of $\K|_U$ with these properties.  It is therefore
the least such Lie algebra bundle, which is to say that it is the
holonomy Lie algebra bundle of the restriction of the Landsberg
structure to $U$.
\end{proof}

\begin{cor}
If $\dim\C_x$ is constant on $M$ then $\overline{\K}=\mathfrak{H}$, and
$\C_x=\mathfrak{H}_x$ for all $x\in M$.
\end{cor}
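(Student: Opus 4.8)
The plan is to deduce the corollary directly from the proposition immediately above, the point being that constancy of $\dim\C_x$ forces the open set on which this dimension attains its maximum to be all of $M$. First I would let $c$ be the constant value of $\dim\C_x$; then the maximum value $\mu$ appearing in the previous proposition is just $c$, and consequently $\{x\in M:\dim\C_x=\mu\}=M$.

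Next I would invoke connectedness. By the standing assumptions of the paper $M$ is connected, and since $M$ is a manifold it is locally path-connected; hence $M$ is path-connected, and is therefore its own (unique) path-connected component of the set $\{x\in M:\dim\C_x=\mu\}=M$. I may thus apply the preceding proposition with $U=M$. Its conclusion, as established in its proof, is that $\overline{\K}=\bigcup_{x\in M}\C_x$ is the holonomy Lie algebra bundle of the Landsberg space obtained by restricting $F$ to $\TMO$. But this restriction returns the original Landsberg space, whose holonomy Lie algebra bundle is $\mathfrak{H}$ by definition; hence $\overline{\K}=\mathfrak{H}$, which is the first assertion.

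Finally I would pass to fibres. By construction the fibre of $\overline{\K}$ over $x$ is $\C_x$, while the fibre of $\mathfrak{H}$ over $x$ is $\mathfrak{H}_x$; so the bundle equality $\overline{\K}=\mathfrak{H}$ immediately yields $\C_x=\mathfrak{H}_x$ for every $x\in M$, completing the proof.

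I expect no genuine obstacle, since essentially all the work has already been carried out in the previous proposition. The only step requiring a moment's care is the reduction itself: one must observe that constancy of the dimension makes the relevant open set exhaust $M$, and that connectedness (upgraded to path-connectedness for a manifold) identifies $M$ with the single component $U$ to which that proposition applies.
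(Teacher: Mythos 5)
Your proof is correct and is exactly the intended argument: the paper states this corollary without proof precisely because it follows immediately from the preceding proposition, by observing that constancy of $\dim\C_x$ makes the maximal-dimension set all of $M$, which (being connected, hence path-connected as a manifold) is its own unique path-component $U$, and the restriction of $F$ to $T^\circ M$ returns the original Landsberg space with holonomy bundle $\mathfrak{H}$. Your care over path-connectedness and the fibrewise identification is sound and matches what the paper leaves implicit.
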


\begin{thm}
If the data are analytic (that is, if $M$ is an analytic manifold and 
$F$ an analytic function) then $\C_x=\mathfrak{H}_x$ 
for all $x\in M$.
\end{thm}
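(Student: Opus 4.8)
The plan is to combine the Ambrose--Singer description $\mathfrak{H}_x=\mathfrak{R}_x$ with a power-series argument that exploits analyticity to express parallel translates of the curvature as convergent series of iterated covariant derivatives. We already know that $\C_x\subseteq\mathfrak{H}_x$, and that $\C_x\subseteq\K_x$ is finite-dimensional, being contained in the isometry algebra of the compact fibre $(\indic_x,g_x)$; hence $\C_x$ is a closed subspace of $\K_x$. The goal is therefore to prove the reverse inclusion $\mathfrak{R}_x\subseteq\C_x$, and since $\mathfrak{R}_x$ is spanned by the parallel translates $\rho_{c*}R_y(v,w)$, it suffices to show that parallel transport along suitable curves maps $\C$ into $\C$.

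The key local lemma I would establish is this: if $d$ is an integral curve of an analytic vector field $X$ on $M$, running from $p$ to $q$ and short enough, then $\rho_{d*}(\C_p)\subseteq\C_q$. To prove it, take any section $K\in\C$, for instance an iterated covariant derivative $\nabla_{X_1}\cdots\nabla_{X_m}(R(Y,Z))$, and consider the $\K$-valued function $t\mapsto P_t(K_{d(t)})$, where $P_t$ denotes parallel transport back to $q$ along $d$. The standard identity $\frac{d}{dt}P_t(K_{d(t)})=P_t\big((\nabla_X K)_{d(t)}\big)$, valid because $\dot{d}=X$, iterates to give the $k$-th derivative $P_t\big((\nabla_X^k K)_{d(t)}\big)$. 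Since all the data are analytic, this function is analytic in $t$, so on a short enough curve it coincides with its Taylor series, whose coefficients are the values $(\nabla_X^k K)_q$. As $\C$ is invariant under $\nabla$, each $\nabla_X^k K\in\C$, so every coefficient lies in the finite-dimensional, and therefore closed, subspace $\C_q$; consequently the series converges to an element of $\C_q$. Evaluating at the endpoint gives $\rho_{d*}K_p\in\C_q$, and hence $\rho_{d*}(\C_p)\subseteq\C_q$.

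Next I would globalize. Any two points of the connected analytic manifold $M$ can be joined by a chain of short integral curves of analytic (for example coordinate) vector fields, so the parallel transport of an arbitrary piecewise-smooth curve factors as a composition of the transports handled by the lemma. Chaining gives $\rho_{c*}(\C_y)\subseteq\C_x$ for every curve $c$ from $y$ to $x$; applying the same statement to the reverse curve, and using that $\rho_{c*}$ is invertible, yields $\rho_{c*}(\C_y)=\C_x$. In particular $\dim\C_x$ is independent of $x$, so the Corollary above applies and gives $\C_x=\mathfrak{H}_x$ for every $x\in M$. Equivalently, once $\rho_{c*}R_y(v,w)\in\C_x$ for all $c$, the Ambrose--Singer theorem gives $\mathfrak{R}_x\subseteq\C_x\subseteq\mathfrak{H}_x=\mathfrak{R}_x$.

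I expect the main obstacle to be the globalization step rather than the local power-series estimate. One must verify that parallel transport of a \emph{general} element of $\C_p$, not merely a curvature value, stays in $\C$; this is precisely why the lemma is formulated for all of $\C$ and relies on the $\nabla$-invariance of $\C$. One must also ensure that the chain of short integral curves genuinely realises transport between every pair of points, subdividing as necessary so that each Taylor series lies within its radius of convergence. Finally, the finite-dimensionality of $\C_x$, which legitimises passing to the limit of the convergent series, should be recorded explicitly.
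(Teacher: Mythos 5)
Your core strategy is essentially the paper's own: expand sections of $\C$ in Taylor series along analytic curves using parallel transport, use the finite-dimensionality (hence closedness) of $\C_x\subseteq\K_x$ to keep the sums of the series inside $\C$, deduce that $\dim\C_x$ is constant on $M$, and invoke the Corollary. The organizational differences are mostly to your credit: you push $\C_p$ forward along a curve, so finite-dimensionality lets you work with finitely many generators at a time, whereas the paper pulls back the values of \emph{all} the generators $\nabla_{i_1}\cdots\nabla_{i_m}R_{jk}$ at once and therefore needs its Lemma~\ref{conv}, which produces a single ball $B_\delta(x)$ on which all of their (infinitely many) power series converge; and you get both dimension inequalities from your lemma applied to a curve and its reverse, where the paper combines a one-sided inclusion with the separate lower-semicontinuity result, Proposition~\ref{dim}. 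One point you should make explicit: ``short enough'' in your lemma depends on the individual section $K$, so to conclude $\rho_{d*}(\C_p)\subseteq\C_q$ for a \emph{single} curve $d$ you must first fix finitely many elements of $\C$ whose values span $\C_p$ (possible precisely because $\dim\C_p<\infty$) and take the minimum of their radii; this is a second, distinct use of finite-dimensionality beyond the closedness you record.

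There is, however, one step that is false as written: ``the parallel transport of an arbitrary piecewise-smooth curve factors as a composition of the transports handled by the lemma.'' Parallel transport is path-dependent --- that is the entire reason holonomy is nontrivial --- so $\rho_{c*}$ for an arbitrary curve $c$ from $y$ to $x$ is \emph{not} the composition of transports along some chain of short integral curves of analytic vector fields joining $y$ to $x$; the two differ in general by a holonomy element. Consequently you have not established $\rho_{c*}(\C_y)\subseteq\C_x$ for every curve $c$, and your ``equivalently, Ambrose--Singer'' ending, which needs exactly that statement, does not follow as stated. Fortunately the error is excisable: your main route through the Corollary needs only the constancy of $\dim\C_x$, and for that it suffices that any two points of $M$ be joined by \emph{some} chain of lemma-curves, which is true. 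Once constancy is known, the Corollary identifies $\bigcup_x\C_x$ with the holonomy bundle $\mathfrak{H}$, whose section space is $\nabla$-invariant, and then Proposition~\ref{parallel} of Appendix~2 shows \emph{a posteriori} that transport along every curve preserves $\C$ --- but that is a consequence of the theorem, not a tool available while proving it. So: restrict the scope of your chaining claim to chain-curves, delete the factorization assertion, and present the Ambrose--Singer remark as a corollary rather than an alternative proof.
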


\begin{proof}
We shall show that $\dim\C_x$ is constant on $M$, so the result
will follow from the corollary above.  We proceed via a couple of
lemmas.

\begin{lem}\label{exp}
Take any $x\in M$ and any analytic curve $c(t)$ with $c(0)=x$.  Let
$V$ be any analytic section of $\mathfrak{H}$. For $|t|$ sufficiently small
\[
V_{c(t)}=\rho_{c*}|_0^t\left(
\sum_{r=0}^\infty\frac{t^r}{r!}(\nabla_{\dot{c}}^rV)_x\right)
\]
where $\rho_{c*}|_0^t$ is the operator of parallel translation along 
$c$ from $c(0)=x$ to $c(t)$.
\end{lem}

\begin{proof}
Let $\{v_\alpha(0)\}$ be a basis for $\mathfrak{H}_x$, and for each
$\alpha$ let $v_\alpha(t)=\rho_{c*}|_0^tv_\alpha(0)$ be the element of
$\mathfrak{H}_{c(t)}$ defined by parallel translation of $v_\alpha(0)$.
Then $\{v_\alpha\}$ is a basis of sections of $\mathfrak{H}$ along $c$,
and $\nabla_{\dot{c}}v_\alpha=0$ by construction.  An analytic section
$V$ of $\mathfrak{H}$, restricted to $c$, may be expressed in terms of
$\{v_\alpha\}$ as
\[
V_{c(t)}=\nu^\alpha(t)v_\alpha(t),
\]
where the coefficients $\nu^\alpha$ are analytic functions of $t$.
Thus
\[
\nu^\alpha(t)=\sum_{r=0}^\infty\frac{t^r}{r!}\frac{d^r\nu^\alpha}{dt^r}(0)
\]
for $|t|$ sufficiently small:\ to be precise, for $|t|$ less
than the smallest of the radii of convergence of the functions
$\nu^\alpha$ at $0$.  For each $r=1,2,\ldots$\,,
\[
(\nabla_{\dot{c}}^rV)(t)=\frac{d^r\nu^\alpha}{dt^r}(t)v_\alpha(t)
\]
since $\nabla_{\dot{c}}v_\alpha=0$. Thus  
\[
V_{c(t)}=\left(
\sum_{r=0}^\infty\frac{t^r}{r!}\frac{d^r\nu^\alpha}{dt^r}(0)\right)
\rho_{c*}|_0^tv_\alpha(0)
=\rho_{c*}|_0^t\left(
\sum_{r=0}^\infty\frac{t^r}{r!}(\nabla_{\dot{c}}^rV)_x\right),
\]
as required.
\end{proof}

We wish to apply this formula with $V$ an element of
$\C\subset\Sec(\mathfrak{H})$.  Now $\C$ is spanned (over
$C^\omega(M)$) by elements of the form
\[
\nabla_{X_1}\nabla_{X_2}\cdots\nabla_{X_m}\left(R(Y,Z)\right),
\]
where $X_1$, $X_2$, \ldots\,, $X_m$, $Y$ and $Z$ are any vector fields
on $M$.  If we restrict our attention to a coordinate patch
$U$ from the given analytic atlas then it will be enough to take 
$X_1$, $X_2$, \ldots\,, $X_m$, $Y$
and $Z$ to be coordinate fields.  We take a local basis $\{V_\alpha\}$
of sections of $\mathfrak{H}$ over $U$ and set
\[
R\left(\vf{x^j},\vf{x^k}\right)=R_{jk}=R^\alpha_{jk}V_\alpha;
\]
the coefficients $R^\alpha_{jk}$ are functions on $U$, analytic in the
coordinates.  We abbreviate $\nabla_{\partial/\partial x^i}$ to
$\nabla_i$; since 
$\nabla(\Sec(\mathfrak{H}))\subset\Sec(\mathfrak{H})$ we may set
\[
\nabla_iV_\alpha=\conn{\beta}{i}{\alpha}V_\beta;
\]
the connection coefficients $\conn{\beta}{i}{\alpha}$ are functions on
$U$, analytic in the coordinates.  Thus for any $x\in U$ there is an
open Euclidean coordinate ball $B_\delta(x)$ centred at $x$ such that
the power series expansions about $x$, in terms of coordinates, of all
the functions $R^\alpha_{jk}$ and $\conn{\beta}{i}{\alpha}$ converge
in $B_\delta(x)$.

\begin{lem}\label{conv}  
The coefficients with respect to $\{V_\alpha\}$ of all sections of
$\mathfrak{H}$ of the form
$\nabla_{i_1}\nabla_{i_2}\cdots\nabla_{i_m}R_{jk}$ can be expanded in
convergent power series about $x\in U$ in {\em the same\/} open
coordinate ball $B_\delta(x)$.
\end{lem}

\begin{proof}
If $f$ is an analytic function whose power series expansion about $x$
converges in $B_\delta(x)$, the power series expansion about $x$ of
$\partial f/\partial x^i$ also converges in $B_\delta(x)$; and if
$f_1$ and $f_2$ are analytic functions whose power series expansions
about $x$ converge in $B_\delta(x)$, the power series expansion about
$x$ of $f_1f_2$ also converges in $B_\delta(x)$.  But
\[
\nabla_i(\nu^\alpha V_\alpha)=
\left(\fpd{\nu^\alpha}{x^i}+\conn{\alpha}{i}{\beta}\nu^\beta\right)V_\alpha.
\]
Thus if the coefficients with respect to $\{V_\alpha\}$ of an analytic
section $V$ of $\mathfrak{H}$ have power series expansions about $x$
that converge in $B_\delta(x)$ then the coefficients of $\nabla_iV$
have power series expansions about $x$ that also converge in
$B_\delta(x)$.  It follows that the coefficients of all sections of
$\mathfrak{H}$ of the form
$\nabla_{i_1}\nabla_{i_2}\cdots\nabla_{i_m}R_{jk}$ can be expanded in
convergent power series about $x$ in $B_\delta(x)$.
\end{proof}

We can now complete the proof of the theorem.

Let $U$ be a coordinate neighbourhood of $x$ in $M$ (from the analytic atlas)
with $x$ as origin, whose image in $\R^m$ contains the ball $B_\delta(x)$.  For
any fixed $w\in T_xM$ whose coordinate representation has Euclidean
length $\delta$ let $r_w$ be the ray starting at $x$ with initial
tangent vector $w$, so that in coordinates $r_w(t)=(tw^i)$ with
$\sum(w^i)^2=\delta^2$.  Set $W=w^i\partial/\partial x^i$, so that
$r_w$ is the integral curve of $W$ starting at $x$.  By
Lemma~\ref{conv} the formula of Lemma~\ref{exp}, which here becomes
\[
V_{r_w(t)}=\rho_{r_w*}|_0^t\left(
\sum_{r=0}^\infty\frac{t^r}{r!}(\nabla_{W}^rV)_x\right),
\]
holds for all $t$ with $0\leq t<1$, and for all $w$, where $V$ is {\em
any\/} vector field of the form
$\nabla_{i_1}\nabla_{i_2}\cdots\nabla_{i_m}R_{jk}$.  The $V_{r_w(t)}$
span $\C_{r_w(t)}$; each term in the sum belongs to
$\C_x$.  That is to say, for each $t$ with $0\leq t<1$,
$\C_{r_w(t)}\subseteq\rho_{r_w*}|_0^t(\C_x)$.  Since
parallel translation is an isomorphism,
$\dim\C_{r_w(t)}\leq\dim\C_x$.  There is thus a
neighbourhood of $x$ in $M$ (namely $B_\delta(x)$) on which
$\dim\C_y\leq\dim\C_x$.  But we know that there is also a
neighbourhood of $x$ in $M$ on which
$\dim\C_y\geq\dim\C_x$; so there is a neighbourhood of $x$
on which $\dim\C_y=\dim\C_x$.  Thus $\dim\C_x$ is
locally constant on $M$, and so (assuming $M$ is connected) it is
constant on $M$.
\end{proof}

\section{Concluding remarks}

In this paper we have used techniques from the theory of Lie groupoids
and Lie algebroids to investigate the infinitesimal holonomy of a
certain class of fibre bundles whose the fibres are compact and
support a fibre metric.  We chose this particular case because it has
an immediate application to the holonomy of the canonical horizontal
lift or nonlinear connection of a Landsberg space in Finsler geometry,
where the horizontal lifts of curves determine isometries between the
fibres.  We should, of course, like to generalise this approach, so as
to be able to deal with Finsler spaces in general, and more generally
with homogeneous nonlinear connections as in \cite{Kozma}.

To do so we have to face a difficulty.  One important role of the
fibre metric in this paper was to ensure that the holonomy groupoid,
in the case of a Landsberg space for example, is a subgroupoid of a
Lie groupoid, namely the fibre-isometry groupoid.  Without it, we have
no obvious choice of an ambient Lie groupoid within which to locate
the holonomy groupoid, even for Finsler spaces.  Indeed, Muzsnay and
Nagy \cite{MuzNag} give an example of a Finsler space whose holonomy
group is not a (finite-dimensional) Lie group.  However, we can always
work within the groupoid of fibre diffeomorphisms, though it will not
have a finite-dimensional smooth structure.  We expect, however, that
it will still be possible to construct the equivalent of a Lie
algebroid for this more general structure, and to represent sections
of this `algebroid' as projectable vector fields on the total space of
the bundle.  This expectation is based on material to be found in
\cite{Michor}; the fact that the fibres are actually (for Finsler 
spaces) or effectively (for homogenous nonlinear connections) compact 
is significant.

If this is indeed correct, then it seems to us that what remains of
our account above when references to fibre metrics and fibre
isometries are omitted should be a good guide to a theory of holonomy
of Finsler spaces and homogeneous nonlinear connections.  In
particular, one can think of the horizontal lift as providing an
`infinitesimal connection' $\gamma$, with respect to which each
projectable vector field can be written uniquely as the sum of a
horizontal and a vertical vector feld.  The vertical vector fields
should then be thought of as sections of the `kernel' of the
`algebroid'; that is, we now take seriously the suggestion made in
Section~4 that the space of vertical vector fields on $\TxMO$ should
be regarded as the fibre of a `vector bundle', indeed `Lie algebra
bundle', $\V$ over $M$.  The covariant-derivative-like operator
$\nabla$, which acts on $\Sec(\V)$, and is related to the
Berwald connection in the Finsler case or more generally to a
so-called connection of Berwald type \cite{CBer}, will continue to
play the role of $\nabla^\gamma$ for the `infinitesimal connection'.
The objects of interest are the `Lie algebra sub-bundles'
$\mathfrak{L}$ of $\V$ which contain all curvature vector fields (that
is, such that for all $x\in M$ and all $v,w\in T_xM$,
$R_x(v,w)\in \mathfrak{L}_x$), and satisfy
$\nabla(\Sec(\mathfrak{L}))\subset\Sec(\mathfrak{L})$.

Let us for definiteness consider the Finsler case.  For any $x\in M$
we may define the vector subspace $\mathfrak{R}_x$ of $\V_x$, the
space of vertical vector fields on $\indic_x$, much as before:\ it is
the smallest subspace of $\V_x$ containing all fields
$\rho_{c*}R_{y}(v,w)$ for any $y\in M$ and any curve $c$ in $M$
joining $y$ to $x$, and (now an extra condition) which is closed with
respect to the compact $C^\infty$ topology (for details see
\cite{Michor}).  Evidently for any curve $c$ joining $x$ and $y$,
$\rho_{c*}:\mathfrak{R}_x\to\mathfrak{R}_y$ is an isomorphism.  Set
$\mathfrak{R}=\bigcup_x\mathfrak{R}_x$.  Then by the ray construction
used in the proof of Proposition~\ref{isotriv}, $\mathfrak{R}$ is
locally trivial, in the sense that $\mathfrak{R}|_U\simeq
U\times\mathfrak{R}_o$ where $U$ is a suitable coordinate chart and
$o$ its origin.  We shall allow ourselves to think of $\mathfrak{R}$
as a vector bundle over $M$.  We claim that
$\nabla(\Sec(\mathfrak{R}))\subset \Sec(\mathfrak{R})$.  Indeed,
let $c$ be a curve in $M$ and $V$ a local section of $\mathfrak{R}$
over some $U$ containing $c(0)=x$, so that in particular
$V(t)=V_{c(t)}$ depends smoothly on $t$ and
$V(t)\in\mathfrak{R}_{c(t)}$.  Now
\[
\nabla_{\dot{c}(0)}V=
\frac{d}{dt}\left(\left(\rho_{c*}|_0^t\right)^{-1}V(t)\right)_{t=0},
\]
where $\rho_{c*}|_0^t:\mathfrak{R}_x\to\mathfrak{R}_{c(t)}$ is the
isomorphism.  Thus
$\left(\rho_{c*}|_0^t\right)^{-1}V(t)\in\mathfrak{R}_x$ for all $t$,
and so $\nabla_{\dot{c}(0)}V\in\mathfrak{R}_x$.  That is, for any
local section $V$ of $\mathfrak{R}$, for any $x$ in its domain and for
any $v\in T_xM$, $\nabla_vV\in\mathfrak{R}_x$.  We can now argue
exactly as in the proof of Theorem~\ref{AmSin} to conclude that
$\mathfrak{R}$ is a Lie algebra sub-bundle of $\V$.

The drawback, of course, is that one cannot use the theory of
Section~2 (which applies only to Lie groupoids) to argue that the
holonomy Lie algebra bundle is the least Lie algebra sub-bundle of
$\V$ which contains curvature fields and whose section space is
invariant under $\nabla$.  But it is certainly the case that
$\mathfrak{R}$ satisfies these latter requirements, and
$\mathfrak{R}_x$ is the obvious candidate for the role of holonomy
algebra at $x$.

In \cite{Michor}, Michor also proposes that (in our notation)
$\mathfrak{R}_x$ should be considered as the holonomy algebra at $x$;
his analysis is different from ours, but leads nevertheless to the
result that $\mathfrak{R}_x$ is a Lie algebra (his Lemma~12.3).

In a recent paper about holonomy of Finsler spaces \cite{MuzNag},
Muzsnay and Nagy express some reservations about this proposal of
Michor's:\ they say, not unreasonably on the face of it, that `the
introduced holonomy algebras [i.e.~the algebras $\mathfrak{R}_x$]
could not be used to estimate the dimension of the holonomy group
since their tangential properties to the holonomy group were not
clarified'.  It is true that Michor doesn't directly address the
question of how one gets from the holonomy group at $x$ to
$\mathfrak{R}_x$; his justification for calling $\mathfrak{R}_x$ the
holonomy algebra at $x$ seems to be that when $\mathfrak{R}_x$ is
finite-dimensional and generated by complete vector fields (as will be
automatically the case when the fibre is compact) then it really is
the holonomy algebra of a principal bundle with connection, as he
shows.  (We, of course, could argue in a somewhat similar vein, namely
by pointing out that on specializing for example to the Landsberg case
we obtain the correct answer.)  But in fact Muzsnay and Nagy have the
solution to their problem ready to hand, since a major part of
Section~2 of their paper is devoted to discussing what it means for a
vector field on $\indic_x$ to be tangent to the holonomy group, and
they show in Section 3 of their paper that the curvature fields
$R_x(v,w)$ are tangent to the holonomy group at $x$ according to their
definition. We shall show below, using their methods for the most part, 
that the elements of $\mathfrak{R}_x$ are indeed tangent to the 
holonomy group at $x$.

Before discussing this further, we wish to resolve a potential cause
of confusion between their paper and ours.  In \cite{MuzNag} Muzsnay
and Nagy introduce, for any Finsler space, the notion of the curvature
algebra at $x\in M$, which they denote by $\mathfrak{R}_x$:\ it is the
Lie subalgebra of $\V_x$ `generated by the curvature vector fields',
i.e.\ the vector fields $R_x(v,w)$ as $v$, $w$ range over $T_xM$, for
{\em fixed\/} $x$.  We point out, for the sake of clarity, that their
curvature algebra at $x$ is not the same as our $\mathfrak{R}_x$, and
in general will be a proper subalgebra of it.

Now the restricted holonomy group at $x$, $H^\circ_x$, which
corresponds to closed curves in $M$ which are contractible, is a
subgroup of the diffeomorphism group $\diff(\indic_x)$ of $\indic_x$.
Moreover, it has the property that every one of its elements can be
connected to the identity diffeomorphism $\id_{\indic_x}$ by a
piecewise-smooth curve lying in $H^\circ_x$; just as in Section~2 this
follows from the Factorization Lemma.

Let $\phi(t)$ be a curve in $\diff(\indic_x)$, defined on some open
interval $I$ containing $0$, such that $\phi(0)=\id_{\indic_x}$, and
which is differentiable enough for the following construction to make
sense.  For $u\in \indic_x$ consider the curve $\phi_u$ in $\indic_x$
defined by $\phi_u(t)=\phi(t)u$.  Set
\[
V_u=\frac{d^k}{dt^k}(\phi_u)_{t=0},
\]
where $k\geq 1$ is the smallest integer for which the derivative is
not zero for all $u$. Assume this defines a smooth vector field $V$ on $\indic_x$. 
Muzsnay and Nagy show in effect that if we take curves $\phi(t)$ in the
restricted holonomy group $H^\circ_x$ then the set of all such vector
fields is an $\R$-linear space, closed under bracket; that is to say,
it is a Lie algebra of vector fields.  (It is necessary to allow $k>1$
in order to deal with, for example, the problem with the
parametrization when defining the bracket in terms of the commutator
of curves in the group.)  But as we remarked above, every element of
$H^\circ_x$ is connected to the identity by a curve in $H^\circ_x$,
which makes this a pretty strong candidate to be called the holonomy
algebra; we denote it by $\hol(x)$ as in \cite{MuzNag}.

For any curve $c$ in $M$ joining $y$ to $x$, the map
$H^\circ_y\to H^\circ_x$ defined by $\psi\mapsto \rho_c\circ
\psi\circ\rho_c^{-1}$, where $\psi\in H^\circ_y$, is an isomorphism.
Now let $\psi(t)$ be a curve in $H^\circ_y$ with
$\psi(0)=\id_{\indic_y}$; then $\phi:t\mapsto \rho_c\circ
\psi(t)\circ\rho_c^{-1}$ is a curve in $H^\circ_x$ with
$\phi(0)=\id_{\indic_x}$.  For $u\in \indic_x$ we have
\[
\phi_u(t)=\rho_c\left(\psi_{\rho_c^{-1}(u)}(t)\right),
\]
so the element $V$ of $\hol(x)$ determined by $\phi$ is just
$\rho_{c*}W$ where $W$ is the element of $\hol(y)$ determined by
$\psi$.  It follows that $W\mapsto\rho_{c*}W$ is an isomorphism of Lie
algebras $\hol(y)\to\hol(v)$.  Now as Muzsnay and Nagy themselves
show, for every $y\in M$, $\hol(y)$ contains the curvature fields
$R_y(v,w)$.  Thus $\hol(x)$ must contain their parallel translates
$\rho_{c*}R_y(v,w)$.  It follows immediately that
$\mathfrak{R}_x\subset\hol(x)$.

\section*{Appendix 1:\ groupoids and algebroids}

In this section we recall the relevant definitions from the theory of
Lie groupoids and Lie algebroids (more details may be found in
Mackenzie's comprehensive work~\cite{MK05}).

\begin{defn}
A \emph{groupoid} consists of two sets $\Omega$ and $M$, called
respectively the \emph{groupoid} and the \emph{base}, together with
two maps $\alpha$ and $\beta$ from $\Omega$ to $M$, called
respectively the \emph{source projection} and \emph{target
projection}, a map $1 : x \mapsto 1_x, \; M \to \Omega$ called the
\emph{object inclusion map}, and a partial multiplication $(h,g)
\mapsto hg$ in $\Omega$ defined on the set $\Omega*\Omega = \{(h,g)
\in \Omega \times \Omega \; | \; \alpha(h) = \beta(g) \}$, all subject
to the following conditions:
\begin{enumerate}
\item $\alpha(hg) = \alpha(g)$ and $\beta(hg) = \beta(h)$ for all
$(h,g) \in \Omega*\Omega$; \item $j(hg) = (jh)g$ for all $j, h, g \in
\Omega$ such that $\alpha(j) = \beta(h)$ and $\alpha(h) = \beta(g)$;
\item $\alpha(1_x) = \beta(1_x) = x$ for all $x \in M$; \item
$g1_{\alpha(g)} = g$ and $1_{\beta(g)}g = g$ for all $g \in \Omega$;
\item each $g \in \Omega$ has a two-sided inverse $g^{-1}$ such that
$\alpha(g^{-1}) = \beta(g)$, $\beta(g^{-1}) = \alpha(g)$ and $g^{-1}g
= 1_{\alpha(g)}$, $gg^{-1} = 1_{\beta(g)}$.
\end{enumerate}
We use the notation $r_g$, $l_g$ to denote right and left translation
by $g \in \Omega$.

A \emph{Lie groupoid} is a groupoid $\Omega$ on base $M$ together with
smooth structures on $\Omega$ and $M$ such that the maps $\alpha,
\beta : \Omega \to M$ are surjective submersions, the object inclusion
map $x \mapsto 1_x, \; M \to \Omega$ is smooth, and partial
multiplication $\Omega*\Omega \to \Omega$ is smooth.  We say that
$\Omega$ is \emph{locally trivial} if the map $(\alpha, \beta) :
\Omega \to M \times M$ is a surjective submersion.  
\end{defn}

We adopt the convention that the elements of $\Omega*\Omega$ satisfy
$\alpha(h) = \beta(g)$ as this is appropriate when the elements of the
groupoid are maps whose arguments are on the right.

\begin{defn}
A \emph{Lie algebroid} is a vector bundle $\pi : A \to M$ together
with a Lie bracket on the (infinite-dimensional) vector space
$\Sec(\pi)$ of sections of $\pi$ and a vector bundle map $a : A \to
TM$ over the identity on $M$, the \emph{anchor map}, which satisfies
\[
[\theta, f\phi] = f[\theta, \phi] + (\Lie_{a \circ \theta}f) \phi
\]
for any sections $\theta, \phi \in \Sec(\pi)$ and any function $f$
on $M$, and whose action on sections by composition is a Lie algebra
homomorphism from $\Sec(\pi)$ to $\X(M)$, the Lie algebra of vector
fields on $M$.

We say that $A$ is \emph{transitive} if the anchor map is surjective
on each fibre.  
\end{defn}

The kernel $L$ of the anchor map $a$ is again a Lie algebroid, but of
a rather special kind:\ \emph{its} anchor map is by definition zero,
and so the Lie bracket on sections of $L \to M$ induces a Lie bracket
on each fibre of $L$.  It may be shown that if $A$ is transitive then
the fibres of $L$ are pairwise isomorphic as Lie algebras, so that $L$
is a Lie algebra bundle.

Every Lie groupoid $\Omega$ gives rise to an associated Lie algebroid
$A\Omega$, in the following way.  Put $A\Omega = \eval{V\alpha}{1(M)}
\subset T\Omega$, the bundle of tangent vectors to $\Omega$ at points
of the image of $1 : M \to \Omega$ which are vertical with respect to
the source projection $\alpha$; this is a vector bundle over $M$ in
the obvious way.  For any $g \in \Omega$ with $x = \alpha(g)$, right
translation by $g$ induces a map $r_{g*} : T_{1_x}\Omega \to T_g
\Omega$ which restricts to a map $V_{1_x}\alpha \to V_g \alpha$; in
this way, any section of $A\Omega \to M$ may be extended by right
translation to a vector field on $\Omega$ which is vertical over
$\alpha$.  The Lie bracket of any two such vector fields is again
vertical over $\alpha$, and so we may define the Lie bracket of two
sections to be the restriction, to $1(M)$, of the Lie bracket of the
corresponding right-invariant vector fields.  Finally, the map $a =
\eval{\beta_*}{A\Omega}$, the restriction of the tangent map to the
target projection $\beta$, satisfies the conditions for an anchor map.
We write $L\Omega$ for the kernel of the anchor map $a : A\Omega \to
TM$.

If the base manifold $M$ is connected then the Lie groupoid $\Omega$
is locally trivial if, and only if, the associated Lie algebroid
$A\Omega$ is transitive.

\section*{Appendix 2:\ some general results about vector bundles with connections}

\begin{prop}\label{parallel}
Let $E\to M$ be a vector bundle equipped with a linear connection. Denote by  
$\nabla$ the corresponding covariant derivative, and for any curve 
$c$ in $M$ let $\tau_c:E_x\to E_{y}$ (where $c(0)=x$, $c(1)=y$) be 
the isomorphism given by parallel translation along $c$. For any vector 
sub-bundle $E'$ of $E$, the following conditions are equivalent:
\begin{enumerate}
\item $\nabla(\Sec(E'))\subset\Sec(E')$; that is, for every section 
$\xi$ of $E'$ and every vector field $X$ on $M$, $\nabla_X\xi$ 
is again a section of $E'$;
\item for every curve $c$, $\tau_c$ restricts to an isomorphism of 
$E'_x$ with $E'_y$.
\end{enumerate}
\end{prop}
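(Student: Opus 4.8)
The plan is to reduce both implications to the standard formula connecting covariant differentiation with parallel translation: if $c$ is a curve with $c(0)=x$ and $\tau_c|_0^t : E_x \to E_{c(t)}$ denotes parallel translation from $0$ to $t$, then for any section $\xi$,
\[
\nabla_{\dot c(0)}\xi=\frac{d}{dt}\Bigl(\bigl(\tau_c|_0^t\bigr)^{-1}\xi(c(t))\Bigr)\Big|_{t=0}.
\]
I would also use the elementary fact that a vector sub-bundle $E'$ of rank $k$ admits, about any point, a local frame $e_1,\dots,e_n$ of $E$ whose first $k$ members $e_1,\dots,e_k$ frame $E'$.

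For the implication (2)$\Rightarrow$(1) I would argue pointwise. Fix $X\in\X(M)$, a section $\xi$ of $E'$, and a point $x$; let $c$ be the integral curve of $X$ through $x$. Since $\xi(c(t))\in E'_{c(t)}$ and, by (2), $(\tau_c|_0^t)^{-1}$ carries $E'_{c(t)}$ into $E'_x$, the curve $t\mapsto(\tau_c|_0^t)^{-1}\xi(c(t))$ lies entirely in the fixed subspace $E'_x$. A curve confined to a linear subspace has its derivative in that subspace, so the left-hand side of the displayed formula, namely $(\nabla_X\xi)(x)$, lies in $E'_x$. As $x$ is arbitrary, $\nabla_X\xi\in\Sec(E')$.

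For the converse (1)$\Rightarrow$(2), which I expect to be the main obstacle, I would work in an adapted frame and examine the parallel-transport equation directly. Let $c:[0,1]\to M$ with $c(0)=x$, $c(1)=y$, and cover the compact image $c([0,1])$ by finitely many charts each carrying such a frame. On one chart, writing $\nabla_X e_b=\Gamma^a_b(X)e_a$, condition (1) applied to the sections $e_1,\dots,e_k$ forces $\Gamma^a_b=0$ whenever $b\le k<a$, so the coefficient matrix is block upper-triangular with respect to the splitting into $E'$ and its complement. A parallel section $\xi=\xi^a e_a$ then satisfies $\dot\xi^a+\Gamma^a_b(\dot c)\xi^b=0$, and for the complementary indices $a>k$ this triangularity makes the equations $\dot\xi^a=-\sum_{b>k}\Gamma^a_b(\dot c)\xi^b$ a closed linear homogeneous system in $(\xi^a)_{a>k}$ alone. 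Hence if $\xi(0)\in E'_x$, so that $\xi^a(0)=0$ for $a>k$, uniqueness of solutions for linear ordinary differential equations gives $\xi^a\equiv0$ for $a>k$, and $\xi$ remains a section of $E'$ across the chart. Composing the parallel transports over the finitely many subintervals furnished by the cover then yields $\tau_c(E'_x)\subseteq E'_y$; since $\tau_c$ is an isomorphism and the two fibres have the same rank, this inclusion is an equality. The only real work lies in this decoupling-and-uniqueness step, the chart-patching being routine bookkeeping.
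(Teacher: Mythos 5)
Your proposal is correct. The direction (2)$\Rightarrow$(1) coincides exactly with the paper's argument: both differentiate the curve $t\mapsto\bigl(\tau_c|_0^t\bigr)^{-1}\xi(c(t))$, which by hypothesis is confined to the fixed linear space $E'_x$, so its derivative $\nabla_{\dot c(0)}\xi$ lies in $E'_x$. For the harder direction (1)$\Rightarrow$(2), however, your mechanism is genuinely different. The paper stays entirely inside $E'$: it takes a basis $\{\xi_a(t)\}$ of sections of $E'$ along $c$, uses hypothesis (1) to write $\nabla_{\dot c}\xi_a=K_a^b\xi_b$, and then solves the linear ODE
\[
\frac{d}{dt}\bigl(\Lambda^b_a\bigr)+\Lambda_a^cK_c^b=0,\qquad \Lambda^b_a(0)=\delta^b_a,
\]
to produce a corrected basis $\eta_a=\Lambda_a^b\xi_b$ of sections of $E'$ along $c$ that is parallel --- an \emph{existence} argument, exhibiting a parallel frame of $E'$ along the curve. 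You instead work in an adapted frame of the ambient bundle $E$, note that (1) forces the connection matrix to be block triangular, and conclude that the components of a parallel section transverse to $E'$ satisfy a closed homogeneous linear system, hence vanish identically if they vanish initially --- a \emph{uniqueness} argument. Both reduce to linear ODE theory, and both are sound. Your version makes explicit the standard dictionary between invariant sub-bundles and block-triangular connection matrices, and your finish (inclusion $\tau_c(E'_x)\subseteq E'_y$ plus equality of ranks) avoids the paper's slightly casual step of assuming the matrix $\Lambda$ stays nonsingular all the way to $y$; the price is the finite-cover bookkeeping with adapted frames, which the paper sidesteps by working with sections of $E'$ along the whole curve (itself a point the paper leaves implicit, since the existence of a frame of $E'$ along all of $c$ deserves a word of justification).
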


\begin{proof} 
$(1)\Rightarrow(2)$\quad By the usual properties of connections we can
define covariant derivatives of sections along curves.  Let
$\{\xi_a(t)\}$ be a local basis of sections of $E'$ along $c(t)$:\
then any local section of $E'$ along $c$ can be written as a linear
combination of the $\xi_a$ with coefficients which are functions of
$t$.  By assumption, $\nabla_{\dot{c}}\xi_a$ is a section of $E'$;
it may therefore be expressed as a linear combination of the
$\xi_a$, say $\nabla_{\dot{c}}\xi_a=K_{a}^b\xi_b$ for some
functions $K_a^b$ of $t$.  We shall modify the $\xi_a$ so as to
obtain a new local basis $\{\eta_a\}$ of sections of $E'$ along $c$ such that
$\nabla_{\dot{c}}\eta_a=0$.  Consider the equations
\[
\frac{d}{dt}(\Lambda^b_a)+\Lambda_a^cK_c^b=0
\]
for functions $\Lambda^b_a$ of $t$.  These are first-order linear
ordinary differential equations for the unknowns $\Lambda^b_a$, and
have a unique solution for initial conditions specified at $t=0$,
which we may take to be $\Lambda^b_a(0)=\delta^b_a$:\ then the
solution, considered as a matrix, will be nonsingular at all points of
$c$ sufficiently close to $x$.  Without loss of generality we may
assume that $y$ lies within the neighbourhood on which
$(\Lambda^b_a)$ is nonsingular.  Then if 
$\eta_a(t)=\Lambda_a^b(t)\xi_b(t)$,
$\{\eta_a\}$ is a new basis of sections of $E'$ along $c$ such that
\[
\nabla_{\dot{c}}\eta_a=\Lambda_a^b\nabla_{\dot{c}}\xi_b
+\frac{d}{dt}(\Lambda_a^b)\xi_b=0.
\]
But this is just the condition for $\eta_a$ to be parallel along $c$,
that is, for $\tau_{c}\eta_a(0)=\eta_a(1)$.  So there are bases of
$E'_x$ and $E'_{y}$ with respect to which $\tau_{c}$ is represented by
the identity matrix.

$(2)\Rightarrow(1)$\quad Let $c$ be smooth a curve in $M$ and $\eta$ a
local section of $E'$ over some neighbourhood of $c(0)=x$, so that
$\eta(t)=\eta_{c(t)}$ depends smoothly on $t$ and $\eta(t)\in
E'_{c(t)}$.  Now
\[
\nabla_{\dot{c}(0)}V=
\frac{d}{dt}\left(\left(\tau_{c}|_0^t\right)^{-1}\eta(t)\right)_{t=0},
\]
where $\tau_{c}|_0^t: E'_x\to E'_{c(t)}$ is the
isomorphism determined by parallel transport.  Thus
$\left(\tau_{c}|_0^t\right)^{-1}\eta(t)\in E'_x$ for all $t$,
and so $\nabla_{\dot{c}(0)}\eta\in E'_x$.
\end{proof}

\begin{prop}\label{vbundle}
Let $\pi:E\to M$ be a vector bundle equipped with a linear connection
$\nabla$.  Let $E'$ be a subset of $E$ such that
\begin{enumerate}
\item $\pi$ maps $E'$ onto $M$;
\item for each $x\in M$, $E'_x=\pi|_{E'}^{-1}(x)$ is a linear 
subspace of $E_x$;
\item for any pair of points $x,y\in M$ and any curve $c$ in $M$ 
joining them, $\tau_c(E'_x)\subseteq E'_y$. 
\end{enumerate}
Then $E'$ is a vector sub-bundle of $E$ and
$\nabla(\Sec(E'))\subset\Sec(E')$.
\end{prop}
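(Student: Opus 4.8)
The plan is to bootstrap entirely from the parallel-translation hypothesis (3): first upgrade the one-sided inclusion into an equality of fibres, then use this to manufacture smooth local frames for $E'$ by parallel transport, and finally invoke Proposition~\ref{parallel} to obtain the invariance of $\Sec(E')$ under $\nabla$.

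First I would show that for a curve $c$ joining $x$ to $y$, the map $\tau_c$ restricts to a linear \emph{isomorphism} $E'_x\to E'_y$, not merely an inclusion. Hypothesis (3) gives $\tau_c(E'_x)\subseteq E'_y$; applying (3) to the reverse curve $\bar c$, which joins $y$ to $x$ and satisfies $\tau_{\bar c}=\tau_c^{-1}$, gives $\tau_c^{-1}(E'_y)\subseteq E'_x$, that is, $E'_y\subseteq\tau_c(E'_x)$. Since $\tau_c:E_x\to E_y$ is a linear isomorphism, the two inclusions force $\tau_c(E'_x)=E'_y$; in particular $\dim E'_x=\dim E'_y$. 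As any two nearby points are joined by a curve lying in a single chart, the function $x\mapsto\dim E'_x$ is locally constant, say equal to $k$ near any chosen point.

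Next I would produce a smooth local frame. Fix $x_0\in M$, choose a chart $U$ centred at $x_0$ over which $E$ is trivial, and pick a basis $\xi_1,\dots,\xi_k$ of $E'_{x_0}$. For each $x\in U$ let $r_x$ be the radial ray from $x_0$ to $x$, exactly as in the proof of Proposition~\ref{isotriv}, and set $\xi_a(x)=\tau_{r_x}(\xi_a)$. By the first step each $\xi_a(x)$ lies in $E'_x$, and the $\xi_a(x)$ are linearly independent because $\tau_{r_x}$ is an isomorphism; since $\dim E'_x=k$ they form a basis of $E'_x$. These sections are smooth: parallel transport along $r_x$ amounts to solving the linear parallel-translation equations with the ray coordinates $x^i(x)$ entering as parameters, and solutions of such systems depend smoothly on their parameters, just as in Proposition~\ref{isotriv}. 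The map $(x,v^a)\mapsto v^a\xi_a(x)$ is thus a smooth local trivialisation of $E'$ over $U$, so $E'$ is a smooth vector sub-bundle of $E$. Once this is known, $E'$ satisfies condition (2) of Proposition~\ref{parallel}, and the implication $(2)\Rightarrow(1)$ of that proposition yields $\nabla(\Sec(E'))\subset\Sec(E')$, completing the argument.

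I expect the main obstacle to be the smoothness step rather than the algebra: establishing that $E'$ is a genuine \emph{smooth} sub-bundle, not merely a field of subspaces of locally constant dimension. Local constancy of rank is immediate from the isomorphism property, but the smoothness of the frame rests on the smooth dependence of parallel transport on its endpoint, which is precisely why the radial-ray construction of Proposition~\ref{isotriv} is the right device—it exhibits the transported basis as the value at $t=1$ of an ODE solution depending smoothly on parameters.
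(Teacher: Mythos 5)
Your proof is correct and follows essentially the same route as the paper's: upgrade hypothesis (3) to fibrewise isomorphisms via the reversed curve, build smooth local frames by parallel transport along coordinate rays exactly as in Proposition~\ref{isotriv}, and conclude with the implication $(2)\Rightarrow(1)$ of Proposition~\ref{parallel}. The only point the paper treats more explicitly is the smoothness of the transition functions between two such ray-built frames, which your version absorbs into the assertion that each frame gives a smooth local trivialisation --- a standard and harmless abbreviation, since coefficients of one smooth frame with respect to another are automatically smooth.
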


\begin{proof}
For any curve $c$, $\tau_c:E_x\to E_y$ has an inverse, namely
$\tau_{\bar{c}}$, where $\bar{c}$ is the curve with the same image as
$c$ but followed in the opposite direction:\ $\bar{c}(t)=c(1-t)$.
Thus any point $e\in E'_y$ is the image under $\tau_c$ of a point of
$E'_x$, namely $\tau_{\bar{c}}(e)$; and so $\tau_c: E'_x\to E'_y$ is
both injective and surjective and so an isomorphism. 

We use the ray construction, as in the proof of
Proposition~\ref{isotriv}.  Fix a point $o\in M$ and take a coordinate
neighbourhood $U$ of $o$ with $o$ as origin such that the image of $U$
is the open unit ball in $\R^m$.  For each $x\in U$ let $r_{x}$ be the
ray joining $o$ to $x$ with respect to the coordinates.  We further
assume that $U$ is contained in some neighbourhood over which $E$ is
locally trivial; then in terms of the coordinates on $M$ and the
corresponding fibre coordinates on $E$, for any $e\in E_o$,
$\tau_{r_{x}}e$ is the solution of a system of ordinary differential
equations in which the coordinates of $x$ play the role of parameters.
The solutions of such equations depend smoothly on parameters, so
if we set $\xi(x)=\tau_{r_{x}}e$ then $\xi$ is a smooth local section
of $E$.  Now choose a basis $\{e_a\}$ for $E'_o$, and for each $x\in
U$ set $\xi_a(x)=\tau_{r_{x}}e_a$.  Then $\{\xi_a(x)\}$ is a basis of
$E'_x$, and $\{\xi_a\}$ is a basis of smooth local sections of $E'$.

This construction provides at the same time local fibre coordinates on
$E'$ and bases of local sections of it.  Let $U$ be one coordinate
neighbourhood in $M$, with origin $o$, for such a system of local
sections, and $V$, with origin $p$, another.  We choose a basis
$\{e_a\}$ for $E'_o$ and a basis $\{f_a\}$ of $E'_p$ to begin the
construction in each case.  We may assume that $\{f_a\}$ is the
parallel translate of $\{e_a\}$ along some curve $c$ joining $o$ and
$p$ (to assume otherwise merely involves interposing an additional
constant non-singular coefficient matrix in the expression for
transition functions shortly to be derived).  We denote by $\{\xi_a\}$
the basis of local sections of $E'$ determined by $o$, $U$ and
$\{e_a\}$, and $\{\eta_a\}$ the basis determined by $p$, $\{f_a\}$ and
$V$.  Then for any $x\in U\cap V$,
$\eta_a(x)=\tau_{V,x}\tau_c\tau_{U,x}^{-1}\xi_a(x)$, where we have
indexed the parallel transport operators along rays by the relevant
coordinate patch.  But $\tau_{V,x}\tau_c\tau_{U,x}^{-1}$ is an
isomorphism of $E'_x$, whose matrix with respect to the basis
$\{\xi_a(x)\}$ depends smoothly on $x\in U\cap V$.  This shows that
$E'$ is a smooth vector sub-bundle of $E$.  By
Proposition~\ref{parallel}, $\nabla(\Sec(E'))\subset\Sec(E')$.
\end{proof}

\begin{cor}\label{cap}
Let $E\to M$ be a vector bundle equipped with a linear connection, 
and let $E^1$ and $E^2$ be vector sub-bundles of $E$ such that 
$\nabla(\Sec(E^i))\subset \Sec(E^)$, $i=1,2$. Then $E^1\cap E^2$ 
is a vector bundle.
\end{cor}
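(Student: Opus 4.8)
The plan is to recognise this corollary as a direct application of Proposition~\ref{vbundle} to the subset $E'=E^1\cap E^2$ of $E$, so that the whole task reduces to verifying the three hypotheses of that proposition. First I would dispose of the two easy conditions. Since each $E^i$ is a vector sub-bundle it contains the zero section, so $E'$ contains the zero section too and hence $\pi$ maps $E'$ onto $M$; and for each $x\in M$ the fibre $E'_x=E^1_x\cap E^2_x$ is an intersection of two linear subspaces of $E_x$, hence again a linear subspace.

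The only point requiring the hypotheses on $\nabla$ is the third condition of Proposition~\ref{vbundle}, namely that $\tau_c(E'_x)\subseteq E'_y$ for every curve $c$ joining $x$ to $y$. Here I would invoke Proposition~\ref{parallel}: the assumption $\nabla(\Sec(E^i))\subset\Sec(E^i)$ is equivalent to the assertion that $\tau_c$ restricts to an isomorphism $E^i_x\to E^i_y$, for $i=1,2$. Thus $\tau_c(E^1_x)=E^1_y$ and $\tau_c(E^2_x)=E^2_y$. Because $\tau_c:E_x\to E_y$ is a linear isomorphism, and in particular injective, it commutes with intersection, so $\tau_c(E^1_x\cap E^2_x)=\tau_c(E^1_x)\cap\tau_c(E^2_x)$. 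Combining these facts gives $\tau_c(E'_x)=E^1_y\cap E^2_y=E'_y$, which is in fact stronger than the required inclusion.

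With all three hypotheses of Proposition~\ref{vbundle} confirmed, I conclude that $E'=E^1\cap E^2$ is a vector sub-bundle of $E$, and moreover that $\nabla(\Sec(E'))\subset\Sec(E')$, recovering the expected invariance of the intersection. I do not expect any real obstacle: the corollary follows immediately from the two preceding propositions of the appendix, and the single essential ingredient is the elementary observation that an injective map preserves intersections, which is precisely what allows the separate parallel-transport invariances of $E^1$ and $E^2$ to be combined into invariance of $E^1\cap E^2$.
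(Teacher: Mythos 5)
Your proposal is correct and follows essentially the same route as the paper's own proof: invoke Proposition~\ref{parallel} to convert the $\nabla$-invariance of each $E^i$ into invariance under parallel translation, observe that the intersection is then also preserved, and conclude via Proposition~\ref{vbundle}. Your only addition is extra detail (the zero-section argument for surjectivity and the injectivity-gives-equality remark, where in fact the mere inclusion $\tau_c(E^1_x\cap E^2_x)\subseteq E^1_y\cap E^2_y$ already suffices for the hypotheses of Proposition~\ref{vbundle}).
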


\begin{proof}
By Proposition \ref{parallel}, parallel translation preserves both $E^1$
and $E^2$, so preserves their intersection.  The result now follows 
from Proposition~\ref{vbundle}.
\end{proof}

\begin{prop}\label{Lie}
If $E\to M$ is a Lie algebra bundle and $\nabla$ a connection on $E$ then 
$\nabla$ is a Lie  connection if and only if for every curve $c$ in 
$M$, $\tau_c:E_x\to E_{y}$ is an isomorphism of Lie algebras.  
\end{prop}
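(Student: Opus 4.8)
The plan is to prove both implications through the link between covariant differentiation and parallel translation, precisely the relation established in Proposition~\ref{parallel}. The formula I will use repeatedly is that for a local section $V$ and a curve $c$ with $c(0)=x$,
\[
\nabla_{\dot c(0)}V=\frac{d}{dt}\left(\left(\tau_c|_0^t\right)^{-1}V(c(t))\right)_{t=0},
\]
the curve $t\mapsto(\tau_c|_0^t)^{-1}V(c(t))$ lying in the single fibre $E_x$. Since the derivation property is $C^\infty(M)$-linear and pointwise in its vector-field argument, it will be enough to test it on $X_x=\dot c(0)$ at each $x$.

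For the forward implication I would assume $\nabla$ is Lie, fix a curve $c$ from $x$ to $y$, and take $u,v\in E_x$. Extending $u,v$ to parallel sections $U,V$ along $c$ (so that $\nabla_{\dot c}U=\nabla_{\dot c}V=0$ and $\tau_c u=U(1)$, $\tau_c v=V(1)$), the derivation property applied along $c$ gives
\[
\nabla_{\dot c}[U,V]=[\nabla_{\dot c}U,V]+[U,\nabla_{\dot c}V]=0,
\]
so $t\mapsto[U(t),V(t)]$ is parallel. Since its value at $t=0$ is $[u,v]$, uniqueness of parallel translation forces its value at $t=1$ to be $\tau_c[u,v]$, that is, $\tau_c[u,v]=[\tau_c u,\tau_c v]$; as $\tau_c$ is already a linear isomorphism it is then a Lie algebra isomorphism.

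For the converse I would assume every $\tau_c$ is a Lie algebra isomorphism and test the derivation property at $x$ along a curve $c$ with $\dot c(0)=X_x$. Writing $\tilde V(t)=(\tau_c|_0^t)^{-1}V(c(t))$ and $\tilde W(t)=(\tau_c|_0^t)^{-1}W(c(t))$ as curves in the fixed fibre $E_x$, each $(\tau_c|_0^t)^{-1}$ intertwines the bracket of $E_{c(t)}$ with that of $E_x$, so
\[
\left(\tau_c|_0^t\right)^{-1}[V,W](c(t))=[\tilde V(t),\tilde W(t)],
\]
where the right-hand bracket is the fixed bilinear bracket on $E_x$. Differentiating at $t=0$ and using the displayed formula for $\nabla$ then yields
\begin{align*}
\nabla_{X_x}[V,W]&=\frac{d}{dt}[\tilde V(t),\tilde W(t)]_{t=0}\\
&=[\tilde V'(0),\tilde W(0)]+[\tilde V(0),\tilde W'(0)]\\
&=[\nabla_{X_x}V,W]+[V,\nabla_{X_x}W],
\end{align*}
which is exactly the Lie-connection condition. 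I do not expect a genuine obstacle here; the one point demanding care is keeping the $t$-varying fibrewise bracket on $E_{c(t)}$ distinct from the fixed bracket on $E_x$ to which the isomorphisms transport it, since it is only the $t$-independence of the latter that licenses the Leibniz step in the final display.
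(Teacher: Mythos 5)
Your proof is correct and takes essentially the same route as the paper's: both directions rest on the fact that parallel transport (equivalently, a parallel frame) makes the fibre bracket constant along the curve, your transport-back-to-$E_x$ computation being the invariant form of the paper's calculation with a parallel basis $\{e_a(t)\}$ and constant structure constants $C^c_{ab}$. The only cosmetic difference is that you work with two parallel sections and the formula $\nabla_{\dot c(0)}V=\frac{d}{dt}\bigl((\tau_c|_0^t)^{-1}V(c(t))\bigr)_{t=0}$ (as in Proposition~\ref{parallel}), where the paper uses index notation throughout.
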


\begin{proof}
Suppose that $\tau_c$ is always a Lie algebra isomorphism.  Take a
basis $\{e_a\}$ for the Lie algebra $E_x$, and set $e_a(t)=\tau_c|_0^t
e_a$, where $\tau_c|_0^t$ is parallel translation along $c$ from
$x=c(0)$ to $c(t)$.  Then $\{e_a(t)\}$ is a basis for $E_{c(t)}$, and 
$[e_a(t),e_b(t)]=\tau_c|_0^t[e_a,e_b]$. So if we set 
$[e_a,e_b]=C^c_{ab}e_c$ then $[e_a(t),e_b(t)]=C^c_{ab}e_c(t)$. Now 
$\nabla_{\dot{c}}e_a(t)=0$, and so for any section $\xi(t)$ of $E$ 
along $c$, say $\xi(t)=\xi^a(t)e_a(t)$, we have 
$\nabla_{\dot{c}}\xi=\dot{\xi}^ae_a$. But for sections 
$\xi$, $\eta$ along $c$ we have 
$[\xi,\eta]=\xi^a\eta^bC^c_{ab}e_c$, whence
\[
\nabla_{\dot{c}}[\xi,\eta]=\frac{d}{dt}(\xi^a\eta^b)C^c_{ab}e_c
=(\dot{\xi^a}\eta^b+\xi^a\dot{\eta}^b)C^c_{ab}e_c
=[\nabla_{\dot{c}}\xi,\eta]+[\xi,\nabla_{\dot{c}}\eta].
\]
Conversely, suppose that $\nabla$ is Lie.  Let $\{e_a(t)\}$ be a basis
of parallel sections of $E$ along $c(t)$, so that 
$\nabla_{\dot{c}}e_a=0$. Then if we set
$[e_a(t),e_b(t)]=C_{ab}^c(t)e_c(t)$, it is evident from
the Lie property that the $C^c_{ab}$ are actually constants. But 
$e_a(t)=\tau_c|_0^te_a(0)$, so
\begin{align*}
[\tau_c|_0^te_a(0),\tau_c|_0^te_b(0)]&=[e_a(t),e_b(t)]\\
&=
C_{ab}^c(t)e_c(t)=C_{ab}^c(0)\tau_c|_0^te_c(0)=\tau_c|_0^t[e_a(0),e_b(0)],
\end{align*}
which says that $\tau_c|_0^t$ is a Lie algebra isomorphism.
\end{proof}

\begin{cor}\label{capLie}
Let $E\to M$ be a Lie algebra bundle equipped with a Lie connection, 
and let $E^1$ and $E^2$ be Lie algebra sub-bundles of $E$ such that 
$\nabla(\Sec(E^i))\subset \Sec(E^i)$, $i=1,2$. Then $E^1\cap E^2$ 
is a Lie algebra bundle.
\end{cor}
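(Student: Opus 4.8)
The plan is to reduce the statement to the vector-bundle version already established in Corollary~\ref{cap}, and then to upgrade the resulting vector sub-bundle to a Lie algebra bundle by exploiting the Lie property of the connection. First I would observe that a Lie connection is in particular a linear connection, and that the hypotheses $\nabla(\Sec(E^i))\subset\Sec(E^i)$ are precisely those of Corollary~\ref{cap}. That corollary therefore gives at once that $E^1\cap E^2$ is a smooth vector sub-bundle of $E$ and that $\nabla(\Sec(E^1\cap E^2))\subset\Sec(E^1\cap E^2)$. Pointwise, each fibre $(E^1\cap E^2)_x=E^1_x\cap E^2_x$ is the intersection of two Lie subalgebras of the fibre Lie algebra $E_x$, and hence is itself a Lie subalgebra. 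So the fibres are Lie algebras, and it remains only to produce local trivializations that are fibrewise Lie algebra isomorphisms.

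Next I would bring in Proposition~\ref{Lie}: since $\nabla$ is a Lie connection, for every curve $c$ in $M$ the parallel translation $\tau_c\colon E_x\to E_y$ is an isomorphism of Lie algebras. By Proposition~\ref{parallel}, the $\nabla$-invariance of each $E^i$ is equivalent to $\tau_c$ restricting to an isomorphism $E^i_x\to E^i_y$; consequently $\tau_c$ carries $E^1_x\cap E^2_x$ onto $E^1_y\cap E^2_y$, and, being a Lie algebra isomorphism of the ambient fibres, it restricts to a Lie algebra isomorphism of the intersections. Thus parallel transport identifies the fibres of $E^1\cap E^2$ as Lie algebras along any curve, so that on the connected base $M$ they are all isomorphic to a single model Lie algebra.

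Finally, to produce the trivializations I would run the ray construction used in the proof of Proposition~\ref{vbundle}. Over a coordinate ball $U$ with origin $o$, fix a basis $\{e_a\}$ of $(E^1\cap E^2)_o$ with $[e_a,e_b]=C^c_{ab}e_c$, and set $\xi_a(x)=\tau_{r_x}e_a$, where $r_x$ is the ray from $o$ to $x$. These are smooth local sections forming a basis of each fibre $(E^1\cap E^2)_x$, and since $\nabla_{\dot r_x}\xi_a=0$ along each ray, the computation in the converse half of Proposition~\ref{Lie} shows that $[\xi_a,\xi_b]=C^c_{ab}\xi_c$ with the \emph{same} constants $C^c_{ab}$ throughout $U$. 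Hence the induced trivialization of $(E^1\cap E^2)|_U$ over the model fibre is fibrewise a Lie algebra isomorphism, and the transition functions between two such charts, being compositions of parallel translations, are Lie algebra automorphisms of the model fibre. This exhibits $E^1\cap E^2$ as a Lie algebra bundle.

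The vector-bundle structure is the routine part, handed to us directly by Corollary~\ref{cap}; the one point demanding care is the verification that the frames produced by the ray construction have \emph{constant} structure constants, which is what makes the trivializations fibrewise Lie homomorphisms rather than merely linear. I expect this to be the crux of the argument, but it is not a genuine obstacle: it follows immediately from the Lie property of $\nabla$ via Proposition~\ref{Lie}, exactly as in the converse direction of that proposition's proof.
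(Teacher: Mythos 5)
Your proposal is correct and follows the same route as the paper: Corollary~\ref{cap} for the vector-bundle structure, the pointwise observation that intersections of Lie subalgebras are Lie subalgebras, and Proposition~\ref{Lie} to transport the Lie structure by parallel translation. In fact your version is more complete than the paper's own proof, which stops at ``the fibres are pairwise isomorphic as Lie algebras'' and leaves implicit the final step you carry out explicitly --- running the ray construction of Proposition~\ref{vbundle} to obtain local frames with constant structure constants, so that the trivializations are fibrewise Lie algebra isomorphisms as the definition of a Lie algebra bundle requires. That last verification is exactly the right point to insist on, and your appeal to the converse half of Proposition~\ref{Lie} for the constancy of the $C^c_{ab}$ is the correct way to settle it.
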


\begin{proof}
By Corollary \ref{cap}, $E^1\cap E^2$ is a vector bundle. Each fibre 
is a Lie algebra, and by Proposition~\ref{Lie} the fibres are 
pairwise isomorphic as Lie algebras.
\end{proof}

\subsubsection*{Acknowledgements} The first author is a Guest
Professor at Ghent University:\ he is grateful to the Department of
Mathematics for its hospitality.  The second author acknowledges the
support of grant no.\  201/09/0981 for Global Analysis and its
Applications from the Czech Science Foundation.

\subsubsection*{Address for correspondence}
M.\ Crampin, 65 Mount Pleasant, Aspley Guise, Beds MK17~8JX, UK\\
Crampin@btinternet.com

\end{document}